\documentclass[journal]{IEEEtran}

% *** CITATION PACKAGES ***
%
\usepackage{cite}

% *** GRAPHICS RELATED PACKAGES ***
%
\ifCLASSINFOpdf
 \usepackage[pdftex]{graphicx}
 \usepackage{subfigure}
  % declare the path(s) where your graphic files are
  % \graphicspath{{../pdf/}{../jpeg/}}
  % and their extensions so you won't have to specify these with
  % every instance of \includegraphics
  % \DeclareGraphicsExtensions{.pdf,.jpeg,.png}
\else 
  % or other class option (dvipsone, dvipdf, if not using dvips). graphicx
  % will default to the driver specified in the system graphics.cfg if no
  % driver is specified.
  % \usepackage[dvips]{graphicx}
  % declare the path(s) where your graphic files are
  % \graphicspath{{../eps/}}
  % and their extensions so you won't have to specify these with
  % every instance of \includegraphics
  % \DeclareGraphicsExtensions{.eps}
\fi 
% graphicx was written by David Carlisle and Sebastian Rahtz. It is
% required if you want graphics, photos, etc. graphicx.sty is already
% installed on most LaTeX systems. The latest version and documentation
% can be obtained at: 
% http://www.ctan.org/pkg/graphicx
% Another good source of documentation is "Using Imported Graphics in
% LaTeX2e" by Keith Reckdahl which can be found at:
% http://www.ctan.org/pkg/epslatex
%
% latex, and pdflatex in dvi mode, support graphics in encapsulated
% postscript (.eps) format. pdflatex in pdf mode supports graphics
% in .pdf, .jpeg, .png and .mps (metapost) formats. Users should ensure
% that all non-photo figures use a vector format (.eps, .pdf, .mps) and
% not a bitmapped formats (.jpeg, .png). The IEEE frowns on bitmapped formats
% which can result in "jaggedy"/blurry rendering of lines and letters as
% well as large increases in file sizes.
%
% You can find documentation about the pdfTeX application at:
% http://www.tug.org/applications/pdftex

% *** MATH PACKAGES ***
%
\usepackage{amsmath}
% A popular package from the American Mathematical Society that provides
% many useful and powerful commands for dealing with mathematics.
%
% Note that the amsmath package sets \interdisplaylinepenalty to 10000
% thus preventing page breaks from occurring within multiline equations. Use:
%\interdisplaylinepenalty=2500
% after loading amsmath to restore such page breaks as IEEEtran.cls normally
% does. amsmath.sty is already installed on most LaTeX systems. The latest
% version and documentation can be obtained at:
% http://www.ctan.org/pkg/amsmath
\usepackage{amssymb}
\usepackage{amsthm}

\usepackage{color}      % use if color is used in text
 \usepackage{mathrsfs}
%\usepackage{hyperref}

% *** FLOAT PACKAGES ***
\usepackage{float} 
%%

%-------------------NEWTHEOREM--------------

\newtheorem{lemma}{\textbf{Lemma}}

\newtheorem{theorem}{\textbf{Theorem}}

\newtheorem{assumption}{\textbf{Assumption}}

%\newcommand{\huang}[1]{\textcolor{blue}{#1}}

%-------------------NEWCOMMANDS--------------
\newcommand{\prob}[1]{\text{Pr}\big\{#1\big\}}
\newcommand{\expect}[1]{\mathbb{E}\big\{#1\big\}}
\newcommand{\expectm}[1]{\mathbb{E}\{#1\}}

\newcommand{\bv}[1]{{\boldsymbol{#1} }}
\newcommand{\script}[1]{{{\cal{#1} }}}

\newtheorem{definition}{\textbf{Definition}}

\newcommand{\olac}{\texttt{OLAC}}

\newcommand{\rlc}{\texttt{RLC}}
\newcommand{\plc}{\texttt{PLC}}
\newcommand{\ade}{\texttt{ADE}}
\newcommand{\bp}{\texttt{BP}}
\newcommand{\rhc}{\texttt{RHC}}
\allowdisplaybreaks

 \newenvironment{achievements}{\begin{list}{$\bullet$}{\topsep 0.05pt \itemsep -.1pt}}{\vspace*{1pt}\end{list}}
% *** Do not adjust lengths that control margins, column widths, etc. ***
% *** Do not use packages that alter fonts (such as pslatex).         ***
% There should be no need to do such things with IEEEtran.cls V1.6 and later.
% (Unless specifically asked to do so by the journal or conference you plan
% to submit to, of course. )

% correct bad hyphenation here
%\hyphenation{op-tical net-works semi-conduc-tor}
\allowdisplaybreaks
\abovedisplayskip=.06in \belowdisplayskip=.06in
\begin{document}

%\title{Learning, Predicting, and Scheduling in Queues}
%\title{System Intelligence: Model, Bounds and Algorithms}
\title{Learning-aided Stochastic Network Optimization with State Prediction}

\author{Longbo Huang$^*$, Minghua Chen$^+$, Yunxin Liu$^\dagger$\\
$*$longbohuang@tsinghua.edu.cn, IIIS@Tsinghua University \\
$+$minghua@ie.cuhk.edu.hk, IE@CUHK\\
$\dagger$yunxin.liu@microsoft.com, Microsoft Research Asia
%, \large{Shaoquan Zhang}, \large{Minghua Chen}, \large{Xin Liu}
\thanks{This paper was presented in part at the $18$th ACM International Symposium on Mobile Ad Hoc Networking and Computing (MobiHoc),  India, July 2017.}
%\thanks{This work  was supported in part by the  National Natural Science Foundation of China Grant 61672316, 61303195, Tsinghua Initiative Research Grant, Microsoft Research Asia Collaborative Research Award, and the China Youth 1000-talent Grant.} 
} 
%\thanks{Longbo Huang  (http://www.iiis.tsinghua.edu.cn/$\sim$huang) is with the Institute for Theoretical Computer Science and the Institute for Interdisciplinary Information Sciences, Tsinghua University, Beijing, P. R. China. }
%\thanks {Shaoquan Zhang and Minghua Chen (\{zsq008, minghua\}@ie.cuhk.edu.hk) are with the Dept. of Information Engineering at the Chinese University of Hong Kong, Shatin, Hong Kong.}
%\thanks{Xin Liu (liuxin@microsoft.com) is with Microsoft Research at Asia, Beijing.}
%\thanks{ This work was supported in part by one or more of the following: the National Basic Research Program of China Grant 2011CBA00300, 2011CBA00301, the National Natural Science Foundation of China Grant 61033001, 61061130540, 61073174, the Network Science Collaborative Technology Alliance sponsored by the U.S. Army Research Laboratory W911NF-09-2-0053, NSF grant 1049541. }
%\markboth{Draft}{Huang}
%}

\maketitle

%\vspace{-.3in}
\begin{abstract}
We investigate the problem of stochastic network optimization in the presence of state prediction and non-stationarity. Based on a novel state prediction model featured with a distribution-accuracy curve, 
%novel distribution-accuracy curve prediction model, 
we develop the \emph{predictive learning-aided control} (\plc) algorithm, which jointly utilizes historic and predicted network state information for decision making. \plc{} is an online algorithm that %requires \emph{zero} a-prior system statistical information, and 
consists of three key components, namely sequential distribution estimation and change detection, dual learning, and online  queue-based control. 
We show that %\plc{} simultaneously achieves good  long-term performance, short-term queue size reduction, accurate change detection, and fast algorithm convergence. In particular, 
for stationary networks, \plc{} achieves a near-optimal 
%$[O(\epsilon)$, $O(\log(1/\epsilon)^2)]$ 
utility-delay  tradeoff. For non-stationary networks,   \plc{} obtains an 
%$[O(\epsilon), O(\log^2(1/\epsilon)$ $+ \min(\epsilon^{c/2-1}, e_w/\epsilon))]$ 
utility-backlog tradeoff for distributions that last longer than a time proportional to the square of the prediction error, which is smaller than that needed by Backpressue  \cite{neelynowbook} for achieving the same utility performance. 
%$\Theta(\frac{\max(\epsilon^{-c}, e_w^{-2})}{\epsilon^{1+a}})$ time, where $e_w$ is the prediction accuracy and $a=\Theta(1)>0$ is a constant (the Backpressue algorithm \cite{neelynowbook} requires an $O(\epsilon^{-2})$ length for the same utility performance with a larger backlog). 
Moreover,  \plc{} detects distribution change $O(w)$ slots faster with high probability ($w$ is the prediction size)  and achieves a convergence time faster than that under Backpressure. Our results demonstrate that state prediction  helps (i) achieve faster detection and convergence, and (ii) obtain better utility-delay tradeoffs. They also  quantify the benefits of prediction in  four important   performance metrics, i.e., utility (efficiency), delay (quality-of-service), detection (robustness), and convergence (adaptability), and provide new insight for joint prediction, learning and optimization in  stochastic networks. 

\end{abstract}
%\begin{keywords}
%Online learning, Queueing, Lagrange Multiplier, Optimal Control, Lyapunov Optimization, network optimization 
%\end{keywords} 

\maketitle 
%==  prediction model: should we shoot for a long term but noise or short term but accurate ==

%== how to use the predicted info, statistics or states? ==

\section{Introduction} \label{section:intro}
Enabled by recent developments in sensing, monitoring, and machine learning methods, utilizing prediction for performance improvement in networked systems has received a growing attention in  both  industry and research. 
%have made it possible to track and predict detailed realtime system dynamics in various systems. As a result,  increasing recent efforts have been made  trying to efficiently utilize prediction for performance improvement in both  industry and research. 
%Thus, novel data analytic and machine learning methods and attempts have been proposed in both industry and research for tracking or even predicting the dynamics in various networked systems  for performance improvements. 
% 
For instance, recent research works \cite{mobility-predict11}, \cite{straggler-mitigation-nsdi14}, and \cite{predict-mobile-15}  investigate the benefits of utilizing prediction in energy saving, job migration in cloud computing, and video streaming in cellular networks. 
%and have demonstrated exciting performance enhancement.   
On the industry side, various companies have implemented different ways to take advantage of prediction, e.g., Amazon utilizes prediction for better  package delivery  \cite{amazon-anticipatory-14}  and Facebook enables  prefetching for faster webpage loading \cite{facebook-prefetching}. 
However, despite the continuing success in these attempts,  most existing results in network control and analysis do not investigate the impact of prediction. Therefore, we still lack a thorough theoretical understanding about the \emph{value-of-prediction} in stochastic network control. 
Fundamental questions regarding how prediction should be integrated in network algorithms, the ultimate prediction gains, and how prediction error impacts  performance, remain largely unanswered. 
%trial-and-error approaches are the norm in design and evaluation. 

%Indeed, most available results in network control fail to consider the effect of information about system dynamics, let alone prediction. 

%Motivated by this need for a theoretical foundation, 
To contribute to developing a theoretical foundation for utilizing prediction in networks, in this paper, we consider a general constrained stochastic network optimization formulation, and aim to  rigorously quantify the benefits of system state prediction and the impact of prediction error. 
Specifically, we are given a discrete-time stochastic network  with a dynamic state that evolves according to some potentially  non-stationary  probability law. Under each system  state, a control action is chosen and implemented.  The action  generates traffic into network queues but also serves  workload from them. The action also results in a system utility (cost) due to service completion (resource expenditure). The traffic, service, and cost are jointly determined by the action and the system state. 
The objective is to maximize the expected utility (or equivalently, to minimize the cost) subject to traffic/service constraints, given imperfect system state prediction information. 
% 

%==stop here==

This is a general framework that  models various practical scenarios, for instance, mobile networks, computer networks, supply chains, and smart grids. 
%In particular, it is most desirable for the techniques to (i) provide strong explicit utility and delay guarantees, (ii) possess fast convergence speed, and (iii) be able to quickly detect changes and adapt to the dynamic environment. 
% 
However, understanding the impact of prediction in this framework is challenging.  
First,  statistical information of network dynamics is often unknown a-priori. Hence, in order to achieve good performance,  algorithms must be able to quickly learn certain sufficient statistics of the dynamics, and make efficient use of prediction while carefully handling prediction error. 
Second,  system states appear randomly in every time slot. Thus, algorithms  must perform well under such incremental realizations of the randomness. 
%Thus,  algorithms must be able to handle individual realizations of the randomness, which often requires algorithms to be  incremental. ==
%, making it more complex to design 
Third, quantifying system service quality often involves handling queueing in the system. As a result, explicit connections between control actions  and queues must be established.  
%queue-like interpretations of the control steps of the algorithms. 
% 
%Finally, our algorithm should efficiently utilize the state prediction information  for performance improvement, and at the same time be carefully about the potential ===

%This greatly complicates algorithm design, especially when one aims to provide fast convergence guarantees. 
%This is known to be hard 
%On the other hand, 

%algorithms also need to have explicit queue-like interpretations of the control steps. However, such algorithms often suffer from a slow convergence speed. 
%Fourth, so far 
%which on the other hand makes it dif
%existing algorithms either do not provide explicit delay guarantee, or suffer from a slow convergence speed, and have mostly been designed for dynamic systems with fixed distributions. 

There has been a recent effort in developing algorithms that can achieve good utility and delay performance for this general problem without prediction in various settings, for instance, wireless networks, \cite{control-rechargeable-twc10},  \cite{eryilmaz_qbsc_ton07}, \cite{li-sigmetrics15}, \cite{ji-mobihoc16}, processing networks, \cite{zhao-forkandjoin-spn}, \cite{jiang-spn},  cognitive radio, \cite{rahulneelycognitive}, and  the smart grid, \cite{gamal-storage-11}, \cite{rahulneely-storage}. 
 %considers downlink transmission and design online algorithms to obtain the best system utility. \cite{shuman-underflow},
%\cite{rahulneelycognitive} designs the problem of scheduling in cognitive radio network, and designs an algorithm with explicit delay and utility bounds. ==more==
% 
However,  existing results mostly focus on networks with stationary distributions. They either assume full system statistical information beforehand, or rely on stochastic approximation techniques to avoid the need of such information. 
Works  \cite{huang-learning-sig-14} and \cite{huang-rlc-15} propose schemes to incorporate historic system information into control, but they do not consider prediction.  
Recent results in \cite{tadrous-proactive}, \cite{xu-mm1}, \cite{zhang-predict-mm1-14},  \cite{xu-necessity-future-info-15} and \cite{huang-predictive-14} consider problems with  traffic demand prediction, and \cite{proactive-tadrous-allerton15} jointly considers demand and channel prediction. However,  they  focus either on $M/M/1$-type models, or do not consider queueing, or do not consider the impact of  prediction error. 
Along a different line of work, \cite{ev-charging-xiaojun-infocom15}, \cite{ooc-sig-15}, \cite{ooc-sig-16} and \cite{rchase-eenergy16} investigate the benefit of prediction from the online algorithm design perspective. 
%studies the benefits of prediction in smoothed online convex optimization and \cite{}. 
% 
Although the results provide new understanding about the effect of prediction, they  do not apply to the general constrained network optimization problem in consideration, where action outcomes are general functions of time-varying network states, queues evolve in a controlled manner, i.e., arrival and departure rates depend on the control policy, and prediction can contain error. 
%. Moreover, the controlled queueing aspect has not been jointly captured. 

%Thus, both approaches ignore the information and system observation aspects. As a result, they fail to provide deep understanding about the \emph{role-of-information} in network control and do not explore the \emph{power-of-learning}. This ignorance inevitably results in a \emph{mismatch} between many control algorithms developed in the literature and control schemes in practical systems, under which system dynamics are often constantly monitored and such information is explicitly incorporated into control. 

%often constantly monitor the system dynamics and adapt their control actions when a change is detected. 
%Moreover, existing algorithms are mostly designed for dynamic systems with stationary distribution.  

In this paper, we develop a novel control algorithm for the general framework called \emph{predictive learning-aided control} (\plc). \plc{} is an online scheme that consists of three components, sequential distribution estimation and change detection, dual learning, and online control (see Fig. \ref{fig:alg-demo}). 
\begin{figure}[ht] 
\begin{center}
\vspace{-.15in} 
\includegraphics[width=2.6in, height=1.2in]{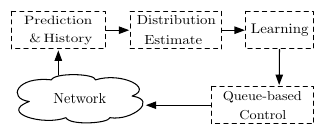}
\vspace{-.15in}
\caption{The \plc{} algorithm contains (i) a distribution estimator that utilizes both historic and predicted information to simultaneously form a distribution estimation and detect distribution change, (ii) a learning component that computes an empirical Lagrange multiplier based on  the estimation, and (iii) a queue-based controller whose decision-making information is augmented by the multiplier. 
%$\bv{x}(t)$ is the chosen network control action. 
%Here \ade{} is the sequential distribution estimation module and $\{S(t), ..., S(t+w)\}$ are the predicted states, \dl{} is the dual-learning module and $\hat{\bv{\pi}}(t)$ is the estimated distribution, and \mbp{} is the multiplier-enhanced Backpressure controller and $\bv{\gamma}^*(t)$ is the empirical multiplier. $\bv{x}(t)$ is the chosen network control action. 
}
%each queue receives random arrivals. The server can only serve one queue at a time. }
\label{fig:alg-demo}
\end{center}
\vspace{-.1in}
\end{figure}

The distribution estimator conducts sequential statistical comparisons based on prediction and historic network state records. Doing so efficiently detects changes of the underlying probability distribution and guides us in selecting the right state samples to form distribution estimates. 
%ensures accurate estimation of the underlying handles non-stationarity and turns prediction into an empirical distribution of future system states. 
The estimated distribution is then fed into a dual learning component to compute an empirical multiplier of an underlying optimization formulation. This multiplier is  further incorporated into the Backpressure (\bp) controller \cite{neelynowbook} to perform realtime network operation.  
Compared to the commonly adopted receding-horizon-control approach (\rhc), e.g., \cite{minghong-igcc-12}, \plc{} provides another way to utilize future state information, which focuses on using the predicted distribution for guiding action selection in the present slot and can be viewed as performing steady-state control under the predicted future distribution. 

We summarize our main contributions as follows.   

i. We propose a general state prediction model featured with a distribution-accuracy curve. Our model captures key factors of several existing prediction models, including window-based  \cite{huang-predictive-14}, distribution-based \cite{tadrous-proactive-shaping}, and filter-based \cite{ooc-sig-16} models. 

ii. We  propose a general constrained network control algorithm called \emph{predictive learning-aided control} (\plc). \plc{} is an online algorithm that %requires \emph{zero} a-prior system statistical information and 
is applicable to both stationary and non-stationary systems. It jointly performs distribution estimation and change detection, dual learning, and queue-based online control. 
%The development of \plc{}   shows how different components can be efficiently integrated together and provides a novel way to use future state information in guiding action selection at present time. 
%It does not require any prior system state information. 

%(ii) We propose a general state prediction model featured with an accuracy-probability curve. Our model captures key factors of existing prediction models, including window-based and convolution-based models. 
iii. We show that for stationary networks, \plc{} achieves an $[O(\epsilon), O(\log^2(1/\epsilon))]$ utility-delay tradeoff. For non-stationary networks,  \plc{} obtains an $[O(\epsilon), O(\log^2(1/\epsilon)$ $+ \min(\epsilon^{c/2-1}, e_w/\epsilon))]$ utility-backlog tradeoff for distributions that last $\Theta(\frac{\max(\epsilon^{-c}, e_w^{-2})}{\epsilon^{1+a}})$ time, where $e_w$ is the prediction accuracy, $c\in(0, 1)$ and $a>0$ is an $\Theta(1)$ constant (the Backpressue algorithm \cite{neelynowbook} requires an $O(\epsilon^{-2})$ length for the same utility performance with a larger backlog).\footnote{Note that when there is no prediction, i.e., $w=0$ and $e_w=\infty$, we recover previous results of \olac{} \cite{huang-learning-sig-14}. } 

%show that \plc{} achieves the near-optimal $[O(\epsilon)$, $O(\log(1/\epsilon)^2)]$ utility-delay tradeoff  when network dynamics are governed by a stationary process.  
% 
%For non-stationary networks, we show that ==== 

%We also show that under \plc{}, for any distribution that lasts for $\Omega(\epsilon^{-1-a}((\epsilon^{-c}-w)^+ +1) )$ slots ($a>0$, $c\in(0, 1)$), where $w$ is the prediction window size, the resulting average utility is within $O(\epsilon)$ of the optimal and the average queue size is $O(\min(\epsilon^{-1+c/2},  \frac{\epsilon}{\sqrt{\epsilon^{-2}\land (w-d)^+}})$ $+\log(1/\epsilon)^2$ $+ (\epsilon^{-c}-w)^+)$. As comparison, previous results require an  $\Theta(\epsilon^{-2-a+c/2})$ duration and an $O(\epsilon^{-1+c/2})$ queue size to achieve the same average utility (ignoring the non-dominant $\log$ factors).

iv. We show that for both stationary and non-stationary system dynamics, \plc{} detects distribution change $O(w)$ slots ($w$ is prediction window size) faster with high probability and achieves a fast $O(\min(\epsilon^{-1+c/2}, e_w/\epsilon)+\log^2(1/\epsilon))$ convergence time, which is faster than the $O(\epsilon^{-1+c/2} + \epsilon^{-c})$ time of the \olac{} scheme \cite{huang-learning-sig-14}, and the $O(1/\epsilon)$ time of Backpressure. %\footnote{We see that as $w$ goes to zero, we recover all previous results, except for the duration requirement, which is due to a different learning approach and rules for resetting $\bv{q}(t)$.}   

The rest of the paper is organized as follows. In Section \ref{section:examples}, we discuss a few motivating examples.  We set up the  notations in Section \ref{section:notation}, and present the problem formulation in Section \ref{section:model}.  Background information is provided in Section \ref{section:review}. Then, we present \plc{} and its analysis in Sections  \ref{section:plc} and \ref{section:performance}. Simulation results are presented in Section \ref{section:sim}, followed by conclusions in Section \ref{section:conclusion}. 
%Due to space limitation, most of the proofs are omitted and can be found in our technical report \cite{plc-techreport-17}. 
To facilitate reading, all the proofs are placed in the appendices.

%\vspace{-.05in}
\section{Motivating Examples}\label{section:examples}
In this section, we present a few interesting practical scenarios that fall into our general framework.

\textbf{Matching in sharing platforms}: Consider a Uber-like company that provides ride service to customers. At every time, customer requests enter the system and available cars join to provide service. Depending on the environment condition (state), e.g., traffic condition or customer status, matching customers to drivers can result in different user satisfaction, and affect the revenue of the company (utility). The company gets access to future customer demand and car availability, and system condition information (prediction), e.g., through reservation or machine learning tools. The objective is to optimally match customers to cars so that the  utility is maximized, e.g., \cite{taxi-recommend-kdd14} and \cite{matching-queue-ton}. 

\textbf{Energy optimization in mobile networks}:  
Consider a  base-station (BS) sending traffic to a set of mobile users. 
% in a cell sending data to the corresponding base-station (BS). 
% transmitting data to a set of mobile users. 
The channel conditions (state) between users and the BS are time-varying. Thus, the BS needs different amounts of power for packet transmission (cost) at different times. 
Due to higher layer application requirements, the BS is required to deliver packets to users at  pre-specified rates. On the other hand, the BS can predict future user locations  in some short period of time, from which it can estimate future channel conditions (prediction). 
The objective of the BS is to jointly optimize power allocation and scheduling among users, so as to minimize  energy consumption, while meeting the rate requirements, e.g., \cite{eryilmaz_qbsc_ton07}, \cite{rahulneelycognitive}. Other factors such as energy harvesting, e.g.,  \cite{tap-energy-ton14}, can also be incorporated in the formulation.

\textbf{Resource allocation in cloud computing}: Consider an operator, e.g., a dispatcher,  assigning computing jobs to servers for processing. 
The job arrival process is time-varying (state), and available processing capacities at servers are also dynamic (state),  e.g., due to background processing. 
Completing users' job requests brings the operator reward (utility). The operator may also have information regarding future job arrivals and service capacities (prediction). 
The goal is to allocate resources and to balance the loads properly, so as to maximize system utility. 
This example can be extended to capture other factors such as rate scaling \cite{yuan-power-tpds-14} and data locality constraints \cite{wang-mapreduce-ton}.

%\textbf{Energy-harvesting Systems:} Consider a general energy harvesting node is providing delivery service to a set of flows. 
%% 
%The arrivals, channel conditions, and the harvestable energy are all time-varying (state), but can often be forecast by based on 
% which affects the admission control, routing and scheduling, and energy harvesting. 

%with known future energy sources. 
%\textbf{Crowdsourcing}: In a crowdsourcing application, e.g., crowdsourcing map construction \cite{waze}, tasks enter the system and are assigned to crowd-workers by a server.  Depending on workers' qualifications, types of jobs, and job requestors' current requirements, i.e, whether a requestor is in a hurry due to job deadline (state), task requestors receive reward (utility), e.g., satisfaction, upon job completion, and workers receive payments (cost). 
% 
%The objective of the server is to find an assignment scheme to maximize the average system utility minus cost. 
% 
%Similar to the mobile example, here the statistics of the state condition can be changing over time, e.g., due to change in the popularity level of certain jobs (non-stationarity). 

%\textbf{Location-based Advertising}: 

In these examples and  related works, not only can the  state statistics be potentially non-stationary, but the systems also often get access to certain (possibly imperfect) future state information through various prediction techniques. 
These features make the problems different from existing settings considered, e.g., \cite{eryilmaz_qbsc_ton07} and \cite{rahulneely-storage}, and require different approaches for both algorithm design and analysis. 
% optimization. %Moreover, it is not at all clear how predictions should be utilized and how they  fundamentally affect system performance. 
%for understanding the value of such information.  

% 
%In practice, however, they are often time-varying and may even be non-stationary. For instance, in cellular networks, a user's location may change due to mobility, which affects his channel statistics. In crowdsourcing, popularity levels of certain types of jobs affect the satisfaction statistics of task completion. 
% 
%Yet this time-varying statistics aspect has been largely ignored, resulting in a \emph{mismatch} between many control algorithms developed in the literature and control schemes in practical system, which often constantly monitor the system dynamics and adapt their actions when a change is detected. 

%%\vspace{-.25in}
\section{Notations}\label{section:notation}
$\mathbb{R}^n$ denotes the $n$-dimensional Euclidean space. $\mathbb{R}^n_+$ ($\mathbb{R}^n_-$) denotes the non-negative (non-positive) orthant. Bold symbols $\bv{x}=(x_1, ..., x_n)$ denote vectors in $\mathbb{R}^n$.  $w.p.1$ denotes ``with probability $1$.''  $\|\cdot\|$ denotes the Euclidean norm. For a sequence  $\{y(t)\}_{t=0}^{\infty}$,  $\overline{y}=\lim_{t\rightarrow\infty}\frac{1}{t}\sum_{\tau=0}^{t-1}\expect{y(\tau)}$ denotes its average (when exists). $\bv{x}\succeq\bv{y}$ means $x_j\geq y_j$ for all $j$. %$a\land b=\min(a, b)$. 
For  distributions $\bv{\pi}_1$ and $\bv{\pi}_2$,  $\|\bv{\pi}_1 - \bv{\pi}_2\|_{TV}=\sum_i |\pi_{1i} - \pi_{2i}|$ denotes the total variation distance. 

%$%\vspace{-.5in}$
\section{System Model}\label{section:model}
%In this section, we specify the general network model. 
Consider a  controller that operates a network with the goal of minimizing the time average cost, subject to the queue stability constraint. The network  operates in slotted time, i.e., $t\in\{0,1,2,...\}$, and  there are $r\geq1$ queues in the network. % (e.g., the amount of data to be transmitted in cellular networks or the amount of flexible load to be scheduled in a smart grid). 

%%\vspace{-.15in}
%$%\vspace{-.25in}$
\subsection{Network state}\label{section:state}
In every slot $t$,  $S(t)$ denotes the current network state, which summarizes current network parameters, such as a vector of conditions for each network link, or a collection of other relevant information about the current network channels and arrivals. %\xl{arrivals?}
 $S(t)$ is independently distributed across time, and each realization is drawn from a state space of $M$ distinct states denoted as $\mathcal{S} = \{s_1, s_2, \ldots, s_M\}$.\footnote{The independent assumption is made to facilitate presentation and understanding. The results in this paper can likely be generalized to systems where $S(t)$ evolves according to general time inhomogeneous Markovian dynamics. } % with the stationary distribution given by $\bv{\pi}(t)$.} 
We denote $\pi_{i}(t)=\prob{S(t)=s_i}$ the probability of being in state $s_i$ at time $t$ and denote $\bv{\pi}(t)=(\pi_{1}(t), ..., \pi_{M}(t))$  the state distribution. 
 %It is easy to see in this case that $\pi_i>0$ for all $s_i$.
The network controller can observe $S(t)$ at the beginning of every slot $t$, but the $\pi_{i}(t)$ probabilities are unknown. 
We assume that each $\bv{\pi}(t)$ stays unchanged for multiple timeslots, and 
%}
%To simplify notations, we divide time into intervals that have the same distributions and 
denote $\{t_k, k=0, 1, ...\}$ the starting point of the $k$-th constant distribution interval $\mathcal{I}_k$, i.e., $\bv{\pi}(t) = \bv{\pi}_k$ for all $t\in\mathcal{I}_k\triangleq\{t_k, t_{k+1}-1\}$. The length of $\mathcal{I}_k$ is denoted by $d_k\triangleq t_{k+1}-t_k$. %Note that we make no assumption about how $\bv{\pi}$

\subsection{State prediction} \label{section:prediction}
At every time slot, the  operator gets access to a prediction module, e.g., a machine learning algorithm,  which provides prediction of future network states. 
%outputs a predicted sequence of network states. 
%That is, at the beginning of every time $t$, the operator gets access to a prediction window $\mathcal{W}_w(t)=\{\hat{S}(t), \hat{S}(t+1), ..., \hat{S}(t+w)\}$, where $w+1$ is the size of the prediction window and represents the system prediction power. 
% 
Different from recent works, e.g.,  \cite{ooc-sig-15},  \cite{ooc-sig-16} and  \cite{lingwen-realtime-eenergy13}, which  assume prediction models on individual states,   we assume that the prediction module outputs a sequence of predicted distributions $\mathcal{D}_w(t)\triangleq \{\hat{\bv{\pi}}(t), \hat{\bv{\pi}}(t+1), ..., \hat{\bv{\pi}}(t+w)\}$, where $w+1$ is the prediction window size. 
%of size $w$. 
%a prediction module that is 
Moreover, the prediction quality is characterized by a distribution-accuracy curve $\{e(0), ..., e(w)\}$ as follows.  For every $0\leq k\leq w$,  $\hat{\bv{\pi}}(t+k)$  satisfies: 
\begin{eqnarray}
||\hat{\bv{\pi}}(t+k) -  \bv{\pi}(t+k)||_{TV}\leq e(k), \,\,\forall\,\, k. \label{eq:predict-model} 
%\overline{\bv{\pi}}_k(t)= \frac{1}{k}\sum_{l=1}^k\bv{\pi}(t+l) + \bv{h}(k). \label{eq:predict-model} % this model does not fit because of rounding effect, i.e., adding another term can cause the distribution to 
%\prob{ \hat{S}(t+k)  \neq S(t+k)}\leq h(k). \label{eq:predict-model}
%\|  \hat{\bv{\pi}}_w - \frac{1}{w}\sum_{k}\bv{\pi}(t+k) \| \leq h(w)}\geq 1- \delta(w). 
\end{eqnarray} 
That is, the predicted distribution at time $k$ has a total-variation error bounded by some $e(k)\geq0$.\footnote{We focus on state distribution prediction instead of predicting individual states. In this case, it makes sense to assume a deterministic upper bound of the difference because we are dealing with distributions.} Note that $e(k)=0$ for all $0\leq k\leq w$ corresponds to a perfect predictor, in that it predicts the exact distribution in  every slot. 
%Here we assume that $h(k_1)\geq h(k_2)$ for all $k_1\geq k_2$. Also note that  $h(k)=0$ for all $0\leq k\leq w$ means that we predict the distribution of every slot correctly. 
We assume  the $\{e(0), ..., e(w)\}$ curve is \emph{known} to the operator and denote 
$e_{w}\triangleq\frac{1}{w+1}\sum_{k=0}^we(k)$ the average prediction error.  

It is often possible to achieve prediction guarantees as in (\ref{eq:predict-model}), e.g., by adopting a maximum likelihood estimator (Section 5.1 in  \cite{walrand-peecs}) based on historic state data. 
%with performance guarantees, e.g., a Kalman filter \cite{walrand-peecs}. 
%$e_{\max}\triangleq\max_ke(k)$. 
%It is important to note that being  
Also note that the prediction model in (\ref{eq:predict-model}) is general and captures key features of several existing prediction models: 
(i) the exact distribution  prediction model in \cite{tadrous-proactive-shaping}, where the future demand distribution is known ($e(k)=0$ for all $k$), 
(ii) the window-based prediction model, e.g.,  \cite{huang-predictive-14}, where each $\hat{\bv{\pi}}(t+k)$ corresponds to the indicator for the true state, and 
%As another example, consider the demand prediction model in \cite{tadrous-proactive-shaping}, where it is assumed that at time $t$, the system gets access to the distribution of future slots. 
%
(iii) the error-convolution prediction model  in \cite{lingwen-realtime-eenergy13},  \cite{ooc-sig-15} and \cite{ooc-sig-16}, which captures key features of  the Wiener filter and Kalman filter. %\huang{Thus,}
\subsection{The cost, traffic, and service}\label{subsection:costtrafficservice}
At each time $t$, after observing $S(t)=s_i$, the controller chooses an action $x(t)\in\mathcal{X}_i$. The set $\mathcal{X}_i$ is called the feasible action set for network state $s_i$ and is assumed to be time-invariant and compact for all $s_i\in\mathcal{S}$. %We denote $X_i$ the cardinality of $\mathcal{X}_i$. 
The cost, traffic, and service generated by the action $x(t)=x_i$ are as follows:
\begin{achievements}
%%\vspace{-.06in}
%%\vspace{-.15in}
\item[(a)] The chosen action has an associated cost given by the cost function $f(t)=f(S(t), x(t))=f(s_i, x_i): \mathcal{X}_i\mapsto \mathbb{R}_+$ (or $\mathcal{X}_i\mapsto\mathbb{R}_-$ in reward maximization problems).\footnote{We use cost and utility interchangeably in this paper.}
%%\vspace{-.06in}
%%\vspace{-.15in}
\item[(b)] The amount of traffic generated by the action to queue $j$ is determined by the traffic function $A_j(t)=A_j(S(t), x(t))=A_{j}(s_i, x_i): \mathcal{X}_i\mapsto \mathbb{R}_{+}$, in units of packets.
%%\vspace{-.06in}
%\vspace{-.15in}
\item[(c)] The amount of service allocated to queue $j$ is given by the rate function 
$\mu_j(t)=\mu_j(S(t), x(t))=\mu_{j}(s_i, x_i): \mathcal{X}_i\mapsto \mathbb{R}_{+}$, in units of packets.
%%\vspace{-.06in}
 \end{achievements}
Here $A_j(t)$ can include both exogenous arrivals from outside the network to queue $j$, and endogenous arrivals from other queues, i.e.,  transmitted packets from other queues to queue $j$. 
We assume the functions $-f(s_i, \cdot)$, $\mu_{j}(s_i, \cdot)$ and $A_{j}(s_i, \cdot)$ are time-invariant, their magnitudes are uniformly upper bounded by some constant $\delta_{\max}\in(0,\infty)$ for all $s_i$, $j$, and they are known to the  operator.
Note that this formulation is general and models many network problems, e.g., \cite{eryilmaz_qbsc_ton07}, \cite{rahulneely-storage}, and \cite{ying_wmshortest_infocom09}. 

\subsection{Problem formulation}
%Queueing, Average Cost, and the Stochastic Problem}
\label{section:queuenotation}
Let $\bv{q}(t)=(q_1(t), ..., q_r(t))^T\in\mathbb{R}^r_{+}$, $t=0, 1, 2, ...$ be the queue backlog vector  process of the network, in units of packets. We assume the following queueing dynamics: % 
%\vspace{-.15in}
\begin{eqnarray}
q_j(t+1)=\max\big[q_j(t)-\mu_j(t)+A_j(t), 0\big], \quad\forall j,\label{eq:queuedynamic}
%\vspace{-.4in}
\end{eqnarray}
and $\bv{q}(0)=\bv{0}$. By using (\ref{eq:queuedynamic}), we assume that when a queue does not have enough packets to send, null packets are transmitted, so that the number of packets entering $q_j(t)$ is equal to $A_j(t)$. We adopt the following notion of queue stability \cite{neelynowbook}:
%\vspace{-.15in}
\begin{eqnarray}
\overline{q}_{\text{av}}\triangleq
\limsup_{t\rightarrow\infty}\frac{1}{t}\sum_{\tau=0}^{t-1}\sum_{j=1}^{r}\expect{q_j(\tau)}<\infty.\label{eq:queuestable}
%\vspace{-.15in}
\end{eqnarray}
We use $\Pi$ to denote an action-choosing policy, and use $f^{\Pi}_{\text{av}}$ to denote its  time average cost, i.e., 
%\vspace{-.2in}
\begin{eqnarray}
f^{\Pi}_{\text{av}}\triangleq
\limsup_{t\rightarrow\infty}\frac{1}{t}\sum_{\tau=0}^{t-1}\expect{f^{\Pi}(\tau)},\label{eq:timeavcost}
%\vspace{-.15in}
\end{eqnarray}
where $f^{\Pi}(\tau)$ is the cost incurred at time $\tau$ under policy $\Pi$. We call an action-choosing  policy \emph{feasible} if at every time slot $t$ it only chooses actions from the feasible action set $\mathcal{X}_i$ when $S(t)=s_i$.  We then call a feasible action-choosing  policy under which (\ref{eq:queuestable}) holds a \emph{stable} policy. 

In every slot, the network controller observes the current network state and prediction, and chooses a control action, with the goal of minimizing the time average cost subject to network stability. 
This goal can be mathematically stated as:\footnote{When $\bv{\pi}(t)$ is time-varying, the optimal system utility needs to be defined carefully. We will specify it when discussing the corresponding results.}
%\vspace{-.4in}
\begin{eqnarray*}
\textbf{(P1)}\,\,\, \bv{\min_{\Pi} \,  f^{\Pi}_{\text{av}}, \,\, \text{s.t.}\,  (\ref{eq:queuestable})}.
\end{eqnarray*}
In the following, we call \textbf{(P1)} \emph{the stochastic problem}, and we use $f^{\bv{\pi}}_{\text{av}}$ to denote its optimal solution given a fixed distribution $\bv{\pi}$. 
It can be seen that the examples in Section \ref{section:examples} can all be modeled by our stochastic problem framework. %, which is the problem formulation  we focus on in this paper. 

Throughout our paper, we make the following assumption.  
\begin{assumption}\label{assumption:bdd-LM}
For every system distribution $\bv{\pi}_k$, there exists a constant $\epsilon_k=\Theta(1)>0$ such that for any valid state distribution $\bv{\pi}' = (\pi'_{1}, ..., \pi'_{M})$ with $\|\bv{\pi}' - \bv{\pi}_k \|_{TV}\leq \epsilon_k$, there exist a set of actions $\{x^{(s_i)}_z\}_{i=1,..., M}^{z=1,2, ..., \infty}$ with $x^{(s_i)}_z\in\mathcal{X}_i$ and variables $\vartheta^{(s_i)}_z\geq0$ for all $s_i$ and $z$ with $\sum_z\vartheta^{(s_i)}_z=1$ for all $s_i$ (possibly depending on $\bv{\pi}'$), such that:
\begin{eqnarray}
&&\sum_{s_i}\pi'_{i}\big\{\sum_z\vartheta^{(s_i)}_z[A_{j}(s_i, x^{(s_i)}_z)-\mu_{j}(s_i, x^{(s_i)}_z)]\big\}\nonumber\\
&&\qquad\qquad\qquad\leq -\eta_0,\,\,\forall\,j, \label{eq:slackness}
\end{eqnarray}
where $\eta_0=\Theta(1)>0$ is independent of $\bv{\pi}'$. $\Diamond$
\end{assumption} 
Assumption \ref{assumption:bdd-LM} corresponds to  the ``slack'' condition commonly assumed   in the literature with $\epsilon_k=0$, e.g., \cite{ying_wmshortest_infocom09} and  \cite{buisrikant_infocom09}.\footnote{Note that $\eta_0\geq0$ is a necessary condition  for network stability \cite{neelynowbook}.}  
 With $\epsilon_k>0$, we assume that when two systems are relatively close to each other (in terms of $\bv{\pi}$), they can both be stabilized by some (possibly different) randomized control policy that results in the same slack. 

%\vspace{-.15in}
\subsection{Discussion of the model}\label{subsection:model-discussion}
Two key differences between our model and previous ones include (i)  $\bv{\pi}(t)$ itself can be time-varying and (ii) the operator gets access to a prediction window $\mathcal{W}_w(t)$ that contains imperfect prediction. 
These two extensions are important to the current network control literature. 
First,  practical systems are often non-stationary. Thus, system dynamics can 
have time-varying distributions. Thus, it is  important to have efficient algorithms to automatically adapt to the changing environment. Second, prediction has recently been made increasingly accurate in various contexts, e.g., user mobility in cellular network and harvestable energy availability in wireless systems, by data collection and machine learning tools. Thus, it is  critical to understand the fundamental benefits and limits of prediction, and its optimal usage. 
Finally, note that the convexity of the problem (\textbf{P1}) depends largely on the structure of the feasible action sets. In the case, when all feasible action sets are convex, it can be shown that the resulting problem is convex using a similar argument as that in \cite{neelyenergy}.

\vspace{-.02in}
\section{The Deterministic  Problem}\label{section:review} 
For our later algorithm design and analysis,  we define the \emph{deterministic problem} and its dual problem \cite{huangneely_dr_tac}. %, which will be important for designing our control algorithm and analysis. %Then, we review the Backpressure algorithm ($\mathtt{BP}$) developed in \cite{neelynowbook} for solving the stochastic problem \textbf{(P1)} with fixed $\bv{\pi}(t)=\bv{\pi}$.
%We then review the Lyapunov technique  for solving the stochastic problem \textbf{(P1)}. To follow the convention, we will call it the Backpressure algorithm.
% 
%\subsection{The deterministic problem}
Specifically, the  deterministic problem for a given distribution $\bv{\pi}$ is  defined as follows \cite{huangneely_dr_tac}:
\begin{eqnarray}
\hspace{-.2in}&& \min:   V\sum_{s_i}\pi_{i}f(s_i, x^{(s_i)})\label{eq:primal}\\
\hspace{-.2in}&&\quad\text{s.t.}\,\,\, \sum_{s_i}\pi_{i} [ A_j(s_i, x^{(s_i)})- \mu_j(s_i, x^{(s_i)})] \leq 0,\,\,\forall\, j,\nonumber\\
\hspace{-.2in}&& \qquad\quad  x^{(s_i)}\in \mathcal{X}_i\quad \forall\, i=1, 2, ..., M. \nonumber
\end{eqnarray}
Here  the minimization is taken over $\bv{x}\in\prod_i\mathcal{X}_i$, 
where $\bv{x}=(x^{(s_1)}, ..., x^{(s_M)})^T$, and $V\geq1$ is a positive constant introduced for later analysis. The dual problem of (\ref{eq:primal}) can be obtained as follows:
\begin{eqnarray}
\max:\,\,\, g(\bv{\gamma}, \bv{\pi}),\quad \text{s.t.}\,\,\, \bv{\gamma}\succeq\bv{0},\label{eq:dualproblem}
\end{eqnarray}
where $g(\bv{\gamma}, \bv{\pi})$ is the dual function for problem (\ref{eq:primal}) %under a given distribution $\bv{\pi}$  
and is defined as:
\vspace{-.06in}
\begin{eqnarray}
\hspace{-.3in}&&g(\bv{\gamma}, \bv{\pi})=\inf_{x^{(s_i)}\in \mathcal{X}_i}\sum_{s_i}\pi_{i}\bigg\{Vf(s_i, x^{(s_i)})\label{eq:dual_separable}\\
\hspace{-.3in}&&\qquad\qquad\qquad\qquad+\sum_j\gamma_j\big[A_j(s_i, x^{(s_i)})- \mu_j(s_i, x^{(s_i)})\big]\bigg\}.\nonumber
\end{eqnarray}
$\bv{\gamma}=(\gamma_1, ..., \gamma_r)^T$ is the  Lagrange multiplier of (\ref{eq:primal}). It is well known that $g(\bv{\gamma}, \bv{\pi})$ in (\ref{eq:dual_separable}) is concave in the vector $\bv{\gamma}$ for all $\bv{\gamma}\in\mathbb{R}^r$. Hence, the problem (\ref{eq:dualproblem}) can usually be solved efficiently, e.g., using dual subgradient methods \cite{bertsekasoptbook}. 
If the cost functions and rate functions are separable over different network components, the problem also admits distributed solutions  \cite{bertsekasoptbook}. 
We use $\bv{\gamma}^*_{\bv{\pi}}$ to denote the optimal multiplier corresponding to a given $\bv{\pi}$ and sometimes omit the subscript when it is clear. 
Denote $g_{\bv{\pi}}^*$  the  optimal value of (\ref{eq:dualproblem}) under a fixed distribution $\bv{\pi}$. It was shown in \cite{huangneely_qlamarkovian} that: 
\begin{eqnarray}
f^{\bv{\pi}}_{\text{av}} = g_{\bv{\pi}}^*. \label{eq:primal-dual-relation}
%\vspace{-2in}
\end{eqnarray} 
That is, $g_{\bv{\pi}}^*$ characterizes the optimal time average cost of the stochastic problem. 
For our analysis, we make the following assumption on the $g(\bv{\gamma}, \bv{\pi}_k)$ function. 
\begin{assumption}\label{assumption:unique} For every system distribution $\bv{\pi}_k$, $g(\bv{\gamma}, \bv{\pi}_k)$ has a unique optimal solution $\bv{\gamma}_{\bv{\pi}_k}^*\neq\bv{0}$ in $\mathbb{R}^r$. $\Diamond$
\end{assumption} 
Assumption \ref{assumption:unique} holds for many network utility optimization problems, e.g., \cite{eryilmaz_qbsc_ton07},  \cite{huangneely_dr_tac} and \cite{jia-infocom-16}.

 \section{Predictive Learning-aided Control}\label{section:plc}
In this section, we present the \emph{predictive learning-aided control} algorithm  (\plc). 
\plc{} contains three main components: a distribution estimator, a learning component, and an online queue-based controller. Below, we first present the estimation part. Then, we present the \plc{} algorithm. 
%present them one by one. 

\subsection{Distribution estimation and change detection}
Here we specify the distribution estimator. 
%what information our algorithm will use for estimation. 
% 
The idea is to first combine the prediction in $\mathcal{W}_w(t)$ with historic state information to form an  \emph{average} distribution, and then perform statistical comparisons for change detection. 
 %plus a correction term to form a distribution estimate for every time slot. 
 We call this module the \emph{average distribution estimate} (\ade). %In the module, we use two parameters $d$ and $T_l$. 
 
Specifically, \ade{} maintains two windows $\mathcal{W}_m(t)$ and $\mathcal{W}_d(t)$  to store network state samples, where $\mathcal{W}_d(t)$ roughly contains the most recent $d$ state samples, and $\mathcal{W}_m(t)$ contains at most $T_l$ state samples after $\mathcal{W}_d(t)$. The formal definition of them are given below, i.e.,
\begin{eqnarray}
\hspace{-.2in} \mathcal{W}_d(t) &=& \{b^s_d(t), ...,  b^e_d(t)\}, \label{eq:wd-def}\\
\hspace{-.2in}  \mathcal{W}_m(t)&=&\{ b_{m}(t), ..., \min[b_d^s(t), b_{m}(t) +T_l ] \}. \label{eq:wm-def}
\end{eqnarray}
Here $b^s_d(t)$ and $b_m(t)$ mark the beginning slots of $\mathcal{W}_d(t)$ and $\mathcal{W}_m(t)$, respectively, and $b^e_d(t)$ marks the end of $\mathcal{W}_d(t)$. Ideally, $\mathcal{W}_d(t)$ contains the most recent $d$ samples (including the prediction) and $\mathcal{W}_m(t)$ contains $T_l$  subsequent samples (where $T_l$ is a pre-specified number). 
% with $b_m(0)=0$ and $b_s(t)=(t+w-d)_+$ with $b_s(0)=0$. 
% 
We denote  $W_m(t)=|\mathcal{W}_m(t)|$ and $W_d(t)=|\mathcal{W}_d(t)|$.  
Without loss of generality, we assume that $d\geq w+1$. %This is a reasonable assumption, 
This assumption is made because, $d$ grows with our control parameter $V$ while prediction power is often limited in practice.  
We also denote $\mathcal{W}_w(t) \triangleq \{t, ..., t+w\}$.

We use $\hat{\bv{\pi}}^d(t)$ and $\hat{\bv{\pi}}^m(t)$ to denote the empirical distributions of $\mathcal{W}_d(t)$ and $\mathcal{W}_m(t)$, i.e.,\footnote{Note that this is only one way to utilize the samples. Other methods such as EWMA can also be applied when appropriate.}  
\begin{eqnarray*}
\hspace{-.1in}\hat{\pi}_i^d(t) &=&\frac{1}{d} \bigg(\sum_{\tau=(t+w-d)_+}^{t-1}1_{[S(\tau) = s_i]} + \sum_{\tau\in \mathcal{W}_w(t)} \hat{\pi}_i(\tau) \bigg)\\ 
\hspace{-.1in}\hat{\pi}_i^m(t) &=& \frac{1}{W_m(t)}\sum_{\tau\in\mathcal{W}_m(t)}1_{[S(\tau) = s_i]}. % + \sum_{\tau\in\mathcal{W}_m(t)\cap\mathcal{W}_w(t)} \hat{\pi}_i(\tau) ). 
\end{eqnarray*}
That is, $\hat{\bv{\pi}}^d(t)$ is the average of the empirical distribution of the ``observed'' samples in $\mathcal{W}_d(t)$ and the predicted distribution, whereas $\hat{\bv{\pi}}^m(t)$ is the empirical distribution of $\mathcal{W}_m(t)$. 
%Without loss of generality, we assume that $d\geq w+1$.\footnote{This is a reasonable assumption, as we see later that $d$ grows with our control parameter $V$ while prediction power is often constrained.} 

The formal procedure of \ade{} is as follows (parameters $T_l, d, \epsilon_d$ will be  specified later). 
%\begin{eqnarray}
%\epsilon_0 \triangleq 
%\end{eqnarray}
%
% 
%Then, at each time $t$, we do the following for estimating an ``effective'' distribution.  

%\begin{mdframed}\noindent
\underline{Average Distribution Estimate (\ade$(T_l, d, \epsilon_d)$):} Initialize $b^s_d(0)=0$, $b^e_d(0)=t+w$ and $b_m(0)=0$, i.e.,  $\mathcal{W}_d(t)=\{0, ..., t+w\}$ and $\mathcal{W}_m(t)=\phi$. At every time $t$, update $b^s_d(t)$, $b^e_d(t)$ and $b_m(t)$ as follows: 
% we set $b_m(t) = (t+w-d)_+$ if any of  the following conditions holds. 
\begin{enumerate}
\item[(i)]  If  $W_m(t)\geq d$ and $||\hat{\bv{\pi}}^d(t) - \hat{\bv{\pi}}^m(t)||_{TV} > \epsilon_d$, set $b_m(t)=t+w+1$ and $b^s_d(t)=b^e_d(t)=t+w+1$. %Mark $t+w+1$ a Type-I reset point. 
%= (t+w-d)_+$. %  + \frac{(w+1)e_{w}}{d}$, set $b_m(t) = (t+w-d)_+$. 
\item[(ii)] If $W_m(t) = T_l$ and there exists $k$ such that $||\hat{\bv{\pi}}(t+k) - \hat{\bv{\pi}}^m(t)||_{TV} >  e(k) +\frac{2M\log(T_l)}{\sqrt{T_l}}$, set $b_m(t)=b^s_d(t)=b^e_d(t)=t+w+1$. Mark $t+w+1$ a  reset point. 

%$b_m(t) = (t+w-d)_+$. 
%$b_m(t) = k'$. 
%\item[(b)] Let $k'$ be the maximum $k\in\{0, ..., w\}$ such that $||\hat{\bv{\pi}}(t+k) - \hat{\bv{\pi}}^m(t)||_{TV} >  e(k) +\frac{4M\log(T_l)}{\sqrt{T_l}}$, set $b_m(t) = k'$. 
\item[(iii)] Else if $t\leq b_d^s(t-1)$, $b_m(t)=b_m(t-1)$, $b^s_d(t) =b^s_d(t-1)$, and $b^e_d(t) =b^e_d(t-1)$.\footnote{This step is evoked after we set 
 $b_m(t')=b_d^s(t')=t'+w+1\geq t$ for some time $t'$, in which case we the two windows remain unchanged until $t$ is larger than $t'+w+1$.} 
\item[(iv)] Else set $b_m(t)=b_m(t-1)$, $b^s_d(t) = (t+w-d)_+$ and $b^e_d(t) = t+w$. 
\end{enumerate} 
%If $W_m(t) < d$, set $b_m(t)=b_m(t-1)$. 
% 
%\begin{eqnarray}
%b_m(t) =\left\{\begin{array}{cc} 
%b_m(t-1) &  \text{if}\quad W_d(t)<d\,\,\text{or}\,\,W_m(t)<d\\ 
%b_m(t-1) &  \text{if}\quad  ||\hat{\bv{\pi}}^d(t) - \hat{\bv{\pi}}^m(t)||_{TV}\leq \epsilon  + \frac{(w+1)e_{w}}{d}.   \\ 
%(t+w-d)_+ &\text{else}
%\end{array}\right. 
%\end{eqnarray}
Output an estimate at time $t$ as follow: 
\begin{eqnarray}
\bv{\pi}_{a}(t) =\left\{\begin{array}{cc} 
\hat{\bv{\pi}}^m(t) &  \text{if} \quad W_m(t)\geq T_l\\
 \frac{1}{w+1}\sum_{k=0}^{w}\hat{\bv{\pi}}(t+k) &  \text{else} 
\end{array}\right. \Diamond
\end{eqnarray} 
%\begin{enumerate}
%\item If  $W_m(t)\geq T_l$ and $||\hat{\bv{\pi}}(t+k) - \hat{\bv{\pi}}^m(t)||_{TV}\leq e(k)+ \frac{4M\log(T_l)}{\sqrt{T_l}}$ for all $k$,  set  %\huang{problem here.... }
%\begin{eqnarray}
%\bv{\pi}_{a}(t)= \hat{\bv{\pi}}^m(t). \label{eq:pi-av-def-1}
%%\frac{W_m(t)\hat{\bv{\pi}}^m(t)+ \sum_{l=1}^w\hat{\bv{\pi}}(t+k)}{W_m(t)+w} \label{eq:pi-av-def-1}
%\end{eqnarray}
%%
%%\begin{eqnarray}
%%\bv{\pi}_{a}(t)= 
%%\frac{W_m(t)\hat{\bv{\pi}}^m(t)+ \sum_{l=1}^w\hat{\bv{\pi}}(t+k)}{W_m(t)+w} \label{eq:pi-av-def-1}
%%\end{eqnarray}
%\item Else  set 
%\begin{eqnarray}
%%\bv{\pi}_{a}(t)= 
%%\frac{W_m(t)\hat{\bv{\pi}}^m(t)+ \sum_{l=1}^w\hat{\bv{\pi}}(t+k)}{W_m(t)+w}. \quad\Diamond  \label{eq:pi-av-def-2}
%\bv{\pi}_{a}(t)= 
% \frac{1}{w}\sum_{k=0}^{w-1}\hat{\bv{\pi}}(t+k).\quad\Diamond  \label{eq:pi-av-def-2}
%\end{eqnarray} 
%\end{enumerate}
%We see from the algorithm that 
The idea of  \ade{} is shown in Fig. \ref{fig:ade}. 
%Note from the algorithm that after $W_m(t)\geq T_l$, the estimated distribution remains $\hat{\bv{\pi}}^m(t)$ until \ade{} detects that the distribution has changed. 
 \begin{figure}[ht]
\begin{center}
\vspace{-.1in}
\includegraphics[width=3.4in, height=0.9in]{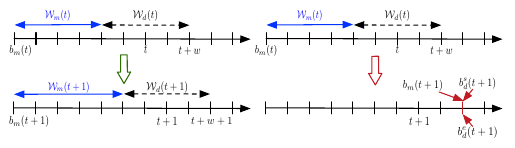}
\vspace{-.2in}
\caption{Evolution of  $\mathcal{W}_m(t)$ and $\mathcal{W}_d(t)$. (Left) No change detected: The samples satisfy the criteria in Steps (i) and (ii). Thus,   $\mathcal{W}_d(t)$ advances by one slot and $\mathcal{W}_m(t)$ increases its size by one. 
(Right) Change detected: The samples violate at least one of the conditions in (i) and (ii). Thus, both windows set their start and end points to $t+w+1$.}
\label{fig:ade}
\end{center}
\vspace{-.1in}
\end{figure}

The intuition of \ade{} is that if the environment is changing over time, we should rely on prediction for control. % and  change detection as we can. 
Else if the environment is stationary, then one should use the average distribution learned over time to combat the potential prediction error   that may affect  performance. 
 $T_l$ is introduced to ensure the accuracy of the empirical distribution and can be regarded as the confidence-level given to the distribution stationarity. 
A couple of technical remarks are also ready. (a) The term $ 2M\log(T_l) /\sqrt{T_l}$ is to compensate the inevitable deviation of $\hat{\bv{\pi}}^m(t)$ from the true value due to randomness. 
(b) In $\mathcal{W}_m(t)$, we only use the first $T_l$ historic samples. Doing so avoids random oscillation in  estimation and facilitates  analysis.

Prediction is used in two ways in \ade. First, it is used in step (i) to decide whether the empirical distributions match (average prediction). Second, it is used to check whether prediction is consistent with the history (individual prediction). 
The reason for having this two-way utilization is to accommodate general prediction types. 
For example, suppose each $\hat{\bv{\pi}}(t+k)$ denotes the indicator for state $S(t+k)$, e.g., as in the look-ahead window model \cite{huang-predictive-14}. Then, step (ii) is loose since $e(k)$ is large, but step (i) will be useful. If $\hat{\bv{\pi}}(t+k)$ gets closer to the true distribution, both steps will be useful. 

 % while still achieving  the key performance guarantees. 
% 
%(iii) The requirement $W_m(t)\geq T_l$ is used to ensure that the adjustment is correct with high probability. %} 
% 
%Pictorially, this is shown in the following figure. 

%only adjust the distribution when $W_m(t)\geq T_l$. 

\subsection{Predictive learning-aided control} 
We are now ready to present the \plc{} algorithm (shown in Fig. \ref{fig:alg-demo}). The formal description is given below.  
 
\underline{\textbf{Predictive Learning-aided Control (\plc)}}: At time $t$,  do: 
\begin{achievements} 
\item (Estimation) Update $\bv{\pi}_{a}(t)$ with \ade($T_l, d, \epsilon_d$). 
\vspace{-.02in}
\item (Learning) %If $W_m(t-1)\geq T_l$, 
Solve the following empirical  problem and compute the optimal Lagrange multiplier $\bv{\gamma}^*(t)$, i.e.,\footnote{The dual problem (\ref{eq:dualproblem-empirical}) is always concave  \cite{bertsekasoptbook}. Thus, it can be solved efficiently with existing convex optimization methods. 
Moreover, our results can be generalized to the case when the optimal Lagrange multiplier is computed only approximately. In particular, our results can be similarly proven if we obtain $\tilde{\bv{\gamma}}^*(t) = \bv{\gamma}^*(t)+err$. In this case, the utility bound remains the same and the delay bound will increase by $O(err)$.}
\begin{eqnarray}
\max:\,\,\, g(\bv{\gamma}, \bv{\pi}_{a}(t)),\quad \text{s.t.}\,\,\, \bv{\gamma}\succeq\bv{0},\label{eq:dualproblem-empirical}
\end{eqnarray}
If $\bv{\gamma}^*(t)=\infty$, set $\bv{\gamma}^*(t)=V\log(V)\cdot\bv{1}$. 
%Else $\bv{\gamma}^*(t)=\bv{0}$. 
%If   $t$ is a reset point, set $\bv{q}(t)=\bv{0}$. 
If $W_m(t-1) = T_l$ and $\bv{\pi}_{a}(t)\neq \bv{\pi}_{a}(t-1)$, set $\bv{q}(t+w+1)=\bv{0}$, i.e., drop all packets currently in the queues. 
% and $t+w+1$ is a reset point, set $\bv{q}(t)=\bv{0}$. %If $T_l=\infty$, then drop 

% ==== This way we can still try to make use of the prediction ==== 

\vspace{-.02in} 
\item (Control) At every time slot $t$,  observe the current network state $S(t)$ and the backlog $\bv{q}(t)$. If $S(t)=s_i$, choose $x^{(s_i)}\in\mathcal{X}_i$ that solves the following:
\begin{eqnarray}
\hspace{-.4in}&&\max: \quad -Vf(s_i, x)+\sum_{j=1}^{r}Q_j(t)\big[\mu_j(s_i, x)-A_j(s_i, x)\big]\nonumber\\
\hspace{-.4in}&&\quad\text{s.t.} \quad x\in\mathcal{X}_i,\label{eq:QLAeq}
\end{eqnarray}
where $Q_j(t) \triangleq q_j(t) + (\gamma_j^*(t) - \theta)^+$. Then, update the queues according to (\ref{eq:queuedynamic}) with Last-In-First-Out. $\Diamond$
\end{achievements} 

For readers who are familiar with the Backpressure (\bp) algorithm, e.g.,  \cite{neelynowbook} and \cite{huangneelypricing-ton},    the control component of \plc{} is  the \bp{} algorithm with its queue vector augmented by the empirical multiplier $\bv{\gamma}^*(t)$. 
Also note that packet dropping is introduced to enable quick adaptation to new dynamics if there is a distribution change. It occurs only when a   long-lasting distribution ends, which avoids dropping packets frequently in a fast-changing environment. 

We have the following remarks. 
\textbf{(i) Prediction usage:} Prediction is explicitly incorporated into  control by forming an average distribution and converting the distribution estimate into a Lagrange multiplier. 
%Below, we will see that $T_l=\max(V^c, e_w^{-2})$. 
The intuition for having $T_l=\max(V^c, e_w^{-2})$ is that when $e_w$ is small, we should rely on prediction as much as possible, and only switch to learned statistics when it is sufficiently accurate. 
\textbf{(ii) Connection with \rhc:} It is interesting to see that when $W_m(t)<T_l$, \plc{} mimics the commonly adopted receding-horizon-control method (\rhc), e.g., \cite{minghong-igcc-12}. The main difference is that, in \rhc{}, future \emph{states} are predicted and are directly fed into a predictive optimization formulation for computing the current action. 
Under \plc{},  \emph{distribution prediction} is  combined with \emph{historic state information} to  compute  an empirical multiplier for augmenting the controller. 
%The multiplier is then combined with the current state to find the control action. 
%  
In this regard, \plc{} can be viewed as  exploring the benefits of statistics whenever it finds the system stationary (and does it automatically). %, i.e., \emph{it simulates the system until steady state}. %, and it mimics \rhc{} when the system dynamics is time-varying.  
%As we will see, doing so efficiently utilizes future  information and significantly accelerates algorithm convergence, and improves system performance. 
%the learning procedure in 
\textbf{(iii)  Parameter selection:} The parameters in \plc{}  can be conveniently chosen as follows. 
First,  fix a detection error probability $\delta=V^{-\log(V)}$. Then,  choose a small $\epsilon_d$ and a  $d$ that satisfies  $d \geq 4\log(V)^2/\epsilon_d^2+w+1$. Finally, choose $T_l=\max(V^c, e_w^{-2})$ and $\theta$ according to (\ref{eq:theta-value}). 
%===. 
% 
%We will show later that this set of parameters ensure the desired performance of the algorithm. 
 
%Finally, note that when $W_m(t)<T_l$, \plc{} 
While recent works \cite{huang-learning-sig-14} and \cite{huang-rlc-15} also design learning-based algorithms that utilize historic information, they do not consider  prediction  and do not provide insight on its benefits and the impact of prediction error. 
Moreover, \cite{huang-learning-sig-14} focuses on stationary systems and \cite{huang-rlc-15} adopts a frame-based scheme.  
 
%\ln\frac{4}{\delta}\frac{2}{\epsilon^2}<d$ to ensure a detection probability $1-\delta$. 
%

%
%Here $\theta$ is a constant queue-offset value. It is introduced to avoid the performance loss caused by  overestimating  the multiplier, in which case the algorithm will think that the system is in a ``congested'' mode and spend more resources than necessary, resulting in poor utility performance. 
% 

%\huang{check if we need to drop packets} 

\section{Performance Analysis} \label{section:performance} 
This section presents the performance results of \plc. We focus on four metrics, detection efficiency, network utility, service delay,  and algorithm convergence. The metrics are chosen to represent robustness, resource utilization efficiency, quality-of-service, and adaptability, respectively. 
Since our main objective  is to investigate how these metrics behave under our algorithm, we  treat network parameters $M$ and $r$ as constants.
%Note that due to

%We also use $D_d\triangleq t_{d+1} - t_d$ to denote the length of $\mathcal{D}_d$. 

% 

%Below, we use the following notation to denote the prediction error. 
%\begin{eqnarray}
%e(w) \triangleq \frac{1}{w}\sum_{k=0}^{w-1}h(k). 
%\end{eqnarray} 
%That is, $e(w)$ denotes the error in the average distribution over the prediction window. 
%It can be seen that $\delta(m, d)$ captures the average prediction error for slots $[t+m, t+m+d]$ at time $t$. 

%\huang{stop here}
\subsection{Detection and estimation}
We first look at the detection and estimation part. The following lemma summarizes the performance of \ade{}, which is affected by the prediction accuracy as expected.

%In the lemma, we use $\tau\in\mathcal{W}$ to denote that time slot $t$ is in the window $\mathcal{W}$. 
\begin{lemma}\label{lemma:detection}
Under \ade($T_l, d, \epsilon_d$),  we have: 

(a) Suppose at a time $t$,  $\bv{\pi}(\tau_1)=\bv{\pi}_1$ for $\tau_1\in\mathcal{W}_{d}(t)$ 
and $\bv{\pi}(\tau_2)=\bv{\pi}_2\neq\bv{\pi}_1$ for all $\tau_2\in\mathcal{W}_{m}(t)$ and  $\max|\pi_{1i} - \pi_{2i}| >4(w+1)e_w/d$. %$\epsilon_0\triangleq \max|\pi_{1i} - \pi_{2i}|/2 - (w+1)e_w/d>0$. 
Then, by choosing $\epsilon_d <\epsilon_0\triangleq \max|\pi_{1i} - \pi_{2i}|/2 - (w+1)e_w/d$ and $d>\ln\frac{4}{\delta}\cdot\frac{1}{2\epsilon_d^2} +w+1$, if $W_m(t)\geq W_d(t)=d$, with probability at least $1-\delta$, $b_m(t+1) = t+w+1$ and $\mathcal{W}_{m}(t+1) = \phi$, i.e., $W_m(t+1)=0$. 
%\ade{} declares distribution change at time $t$.   
%and set $\hat{\pi}_i(t+w-d+1)=1/M$. 
%$|\mathcal{W}_{d}(t)|=d$ and $|\mathcal{W}_{m}(t)|\geq d$, 

(b) Suppose $\bv{\pi}(t)=\bv{\pi}$  $\forall\, t$. Then, if $W_{m}(t)\geq W_{d}(t)=d$, under \ade($T_l, d, \epsilon_d$) with $d\geq\ln\frac{4}{\delta}\cdot\frac{2}{\epsilon_d^2}+w+1$, with probability at least $1-\delta - (w+1)MT_l^{-2\log(T_l)}$, $b_m(t+1) = b_m(t)$. $\Diamond$   
%and $\mathcal{W}_{m}(t+1) = \{b_m(t), ..., t-d\}$, i.e., $W_m(t+1)=W_m(t)+1$.  $\Diamond$  
% declares time $t$ normal . $\Diamond$  
%(b) Suppose at a time $t$, $|\mathcal{W}_{d}(t)|=d$ and $|\mathcal{W}_{m}(t)|\geq d$, and $\bv{\pi}(\tau)=\bv{\pi}$ for $\tau\in\mathcal{W}_{m}(t)\cup\mathcal{W}_{d}(t)$. Then, \ade($d, \bv{\epsilon}$) with $d\geq\ln\frac{4}{\delta}\cdot\frac{2}{(\min_i\epsilon_i)^2}$ declares time $t$ normal with probability at least $1-O(\delta)$. $\Diamond$  
%and $\bv{\pi}(\tau)=\bv{\pi}_2\neq\bv{\pi}_1$ for $\tau\in\mathcal{W}_{d}(t)$. 
%Suppose $\bv{\pi}(\tau)=\bv{\pi}$ for $\tau\in\{t-D_1, t+D_2\}$. Then, \ade($d, \bv{\epsilon}$) with $d\geq\ln\frac{4}{\delta}\cdot\frac{2}{(\min_i\epsilon_i)^2}$ declares time $t$ normal with probability at least $1-O(\delta)$. $\Diamond$ 
\end{lemma}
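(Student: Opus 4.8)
The plan is to prove both parts by controlling the two empirical distributions $\hat{\bv{\pi}}^d(t)$ and $\hat{\bv{\pi}}^m(t)$ around the underlying true distributions via a Hoeffding-type concentration bound of the form $\prob{\text{deviation}>\lambda}\le 2\exp(-2n\lambda^2)$, and then comparing the resulting total-variation distances against the detection thresholds in steps (i) and (ii) of \ade. The one structural subtlety is that $\hat{\bv{\pi}}^d(t)$ mixes observed indicator samples with the predicted block $\mathcal{W}_w(t)$, so I would first decompose its error into a deterministic \emph{prediction bias} and a random \emph{sampling fluctuation}. Writing $\hat{\bv{\pi}}^d(t)$ as the $\frac1d$-weighted sum of $d-(w+1)$ indicator samples and the $w+1$ predicted distributions, and using $\|\hat{\bv{\pi}}(t+k)-\bv{\pi}(t+k)\|_{tv}\le e(k)$ with $\sum_k e(k)=(w+1)e_w$, I would show coordinate-wise that $|\expectm{\hat{\pi}_i^d(t)}-\pi_{1i}|\le (w+1)e_w/d$; the remaining sampling part is a normalized sum of bounded independent indicators, to which the concentration bound applies directly.

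For part (a), I would work on the single coordinate $i^\star$ achieving $\max_i|\pi_{1i}-\pi_{2i}|$, which is exactly why the hypothesis and $\epsilon_0$ are stated in terms of $\max_i|\pi_{1i}-\pi_{2i}|$ rather than the full $\|\cdot\|_{tv}$ (and why $\|\hat{\bv{\pi}}^d(t)-\hat{\bv{\pi}}^m(t)\|_{tv}\ge|\hat\pi^d_{i^\star}(t)-\hat\pi^m_{i^\star}(t)|$ suffices to trigger step (i)). Since $\hat{\bv{\pi}}^m(t)$ is a clean empirical over $W_m(t)\ge d$ i.i.d.\ samples from $\bv{\pi}_2$, and $\hat{\bv{\pi}}^d(t)$ sits within bias $(w+1)e_w/d$ plus fluctuation of $\bv{\pi}_1$, the reverse triangle inequality gives
\begin{eqnarray*}
|\hat\pi_{i^\star}^d(t)-\hat\pi_{i^\star}^m(t)| \ge |\pi_{1i^\star}-\pi_{2i^\star}| - (w+1)e_w/d - \Lambda_d - \Lambda_m,
\end{eqnarray*}
where $\Lambda_d,\Lambda_m$ are the two sampling fluctuations. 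Choosing $d> \ln\frac{4}{\delta}\cdot\frac{1}{2\epsilon_d^2}+w+1$ makes each of $\Lambda_d,\Lambda_m<\epsilon_d$ with probability $1-\delta/2$ (the bound $2\exp(-2(d-w-1)\epsilon_d^2)\le\delta/2$), and the gap hypothesis $\max_i|\pi_{1i}-\pi_{2i}|>4(w+1)e_w/d$ together with $\epsilon_d<\epsilon_0$ forces the right-hand side above $\epsilon_d$. Hence step (i) fires with probability $\ge 1-\delta$, and a short bookkeeping argument finishes: step (i) sets the markers to $t+w+1$, so at slot $t+1$ we have $W_m(t+1)=0<d$ (steps (i),(ii) cannot fire) and step (iii) freezes the markers because $t+1\le b_d^s(t)=t+w+1$, yielding $b_m(t+1)=t+w+1$ and $\mathcal{W}_m(t+1)=\phi$.

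For part (b), under a single stationary $\bv{\pi}$ I would instead show that \emph{neither} reset step fires. For step (i), both $\hat{\bv{\pi}}^d(t)$ and $\hat{\bv{\pi}}^m(t)$ concentrate on the common $\bv{\pi}$; with the choice $d\ge \ln\frac{4}{\delta}\cdot\frac{2}{\epsilon_d^2}+w+1$ each lies within $\epsilon_d/2$ of $\bv{\pi}$ in total variation with probability $\ge 1-\delta/2$ (the prediction bias $(w+1)e_w/d$ being dominated by $\epsilon_d$ for the chosen large $d$), so $\|\hat{\bv{\pi}}^d(t)-\hat{\bv{\pi}}^m(t)\|_{tv}\le\epsilon_d$ with probability $\ge 1-\delta$. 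For step (ii), checked only when $W_m(t)=T_l$, I would use $\|\hat{\bv{\pi}}(t+k)-\hat{\bv{\pi}}^m(t)\|_{tv}\le e(k)+\|\hat{\bv{\pi}}^m(t)-\bv{\pi}\|_{tv}$, so the step cannot fire unless $\|\hat{\bv{\pi}}^m(t)-\bv{\pi}\|_{tv}>\frac{2M\log(T_l)}{\sqrt{T_l}}$; applying the concentration bound on $T_l$ samples at per-coordinate level $\frac{2\log(T_l)}{\sqrt{T_l}}$, union-bounded over the $M$ coordinates and the $w+1$ indices $k$, bounds this event by $(w+1)MT_l^{-2\log(T_l)}$. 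Combining the two failure events gives no reset with probability $\ge 1-\delta-(w+1)MT_l^{-2\log(T_l)}$, i.e., $b_m(t+1)=b_m(t)$.

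The main obstacle I anticipate is the bias/fluctuation bookkeeping for $\hat{\bv{\pi}}^d(t)$: isolating the prediction-error contribution cleanly as $(w+1)e_w/d$ (which is precisely what produces the gap condition $>4(w+1)e_w/d$ and the threshold $\epsilon_0=\max_i|\pi_{1i}-\pi_{2i}|/2-(w+1)e_w/d$), while simultaneously calibrating the per-estimate concentration budgets so the constants collapse to $\frac{1}{2\epsilon_d^2}$ in part (a) (a single-coordinate deviation at level $\epsilon_d$) and $\frac{2}{\epsilon_d^2}$ in part (b) (two total-variation deviations each at level $\epsilon_d/2$). By comparison, the window-index bookkeeping that translates ``step (i)/(ii) fires or not at $t$'' into the stated conclusions about $b_m(t+1)$ and $W_m(t+1)$ is routine, but it must be carried out carefully because of the $t+w+1$ offsets and the interaction between steps (iii) and (iv).
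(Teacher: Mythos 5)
Your overall architecture is the paper's: the bias/fluctuation split of $\hat{\bv{\pi}}^d(t)$ is exactly the paper's auxiliary quantity $\tilde{\bv{\pi}}^d(t)$ (true distributions substituted for the predicted block, with deterministic bias $\epsilon_1\triangleq(w+1)e_w/d$), the reduction to the maximizing coordinate $i^\star$ is the same, and your part (b) — two total-variation concentrations at level $\epsilon_d/2$ yielding the $\frac{2}{\epsilon_d^2}$ constant for step (i), plus the triangle inequality $\|\hat{\bv{\pi}}(t+k)-\hat{\bv{\pi}}^m(t)\|_{tv}\le e(k)+\|\hat{\bv{\pi}}^m(t)-\bv{\pi}\|_{tv}$ and the concentration bound at level $2\log(T_l)/\sqrt{T_l}$, union-bounded over $M$ coordinates and $w+1$ prediction slots to get $(w+1)MT_l^{-2\log(T_l)}$ for step (ii) — matches the paper's Appendix A essentially line for line. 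However, your part (a) has a genuine quantitative gap in the margin arithmetic. Budgeting \emph{both} sampling fluctuations at level $\epsilon_d$, your lower bound on the observed discrepancy is $\Delta-\epsilon_1-2\epsilon_d$ where $\Delta\triangleq\max_i|\pi_{1i}-\pi_{2i}|$, and the requirement $\Delta-\epsilon_1-2\epsilon_d>\epsilon_d$ is equivalent to $\epsilon_d<(\Delta-\epsilon_1)/3$. This is \emph{strictly smaller} than the claimed range $\epsilon_d<\epsilon_0=\Delta/2-\epsilon_1$ whenever $\Delta>4\epsilon_1$ — which is precisely the lemma's standing hypothesis, so the shortfall is never vacuous. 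Concretely, take $e_w=0$, $\Delta=0.6$, $\epsilon_d=0.25<\epsilon_0=0.3$: your guaranteed discrepancy is only $0.6-2(0.25)=0.1<\epsilon_d$, so your argument does not force step (i) to fire even though the lemma asserts detection for this $\epsilon_d$.

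The paper avoids this by calibrating the concentration levels to the \emph{gap} rather than to the \emph{threshold}: it bounds the non-detection event directly, noting that $\|\hat{\bv{\pi}}^d(t)-\hat{\bv{\pi}}^m(t)\|_{tv}\le\epsilon_d$ implies $|\tilde{\pi}^d_{i^\star}(t)-\hat{\pi}^m_{i^\star}(t)|\le\epsilon_d+\epsilon_1\le\epsilon_0$, and that this forces at least one of $|\tilde{\pi}^d_{i^\star}(t)-\pi_{1i^\star}|\geq\alpha\epsilon_0$ or $|\hat{\pi}^m_{i^\star}(t)-\pi_{2i^\star}|\geq(1-\alpha)\epsilon_0$ — otherwise the three-term triangle inequality gives $\Delta\le 2\epsilon_0=\Delta-2\epsilon_1<\Delta$, a contradiction. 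It then chooses $\alpha\in(0,1)$ to equalize the two Hoeffding exponents given the unequal sample sizes $W_m(t)$ and $d-w-1$ (yielding $\alpha=\frac{\sqrt{W_m(t)/(W_d(t)-w-1)}}{1+\sqrt{W_m(t)/(W_d(t)-w-1)}}$), and solves for the achievable $\epsilon_0$ at total failure probability $\delta$; minimizing over $W_m(t)\ge d$ produces exactly the condition $d>\ln\frac{4}{\delta}\cdot\frac{1}{2\epsilon_d^2}+w+1$ for every $\epsilon_d<\epsilon_0$. Since your deviation budgets scale with $\epsilon_d$ instead of $\epsilon_0$, your version proves a weaker lemma (valid only for $\epsilon_d$ below roughly a third of the gap); to repair it you need the paper's contradiction-on-the-complement device with $\alpha$-balanced levels proportional to $\epsilon_0$. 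Your window-index bookkeeping at the end of part (a) (step (iii) freezing the markers because $t+1\le b^s_d(t)=t+w+1$) is correct and in fact more explicit than the paper's.
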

\begin{proof}
See Appendix A. 
\end{proof}

%The following corollary follows 
Lemma \ref{lemma:detection} shows that for a stationary system, i.e., $\bv{\pi}(t)=\bv{\pi}$, $W_m(t)$ will likely grow to a large value (Part (b)), in which case $\bv{\pi}_{a}(t)$ will stay close to $\bv{\pi}$ most of the time. 
If instead $\mathcal{W}_m(t)$ and $\mathcal{W}_d(t)$ contain samples  from different distributions, \ade{} will reset $\mathcal{W}_m(t)$ with high probability. 
Note that since the first $w+1$ slots belong to prediction, \plc{} detects changes $O(w)$ slots faster compared to methods without prediction. 
The condition $\max|\pi_{1i} - \pi_{2i}| > 4(w+1)e_w/d$ can be understood as follows. If we want to distinguish two different distributions, we want the detection threshold to be no more than half of the distribution distance. Now with prediction, we want the potential prediction error to be no more than half of the threshold, hence the factor $4$. 
%is almost a necessary condition for change detection using prediction. 
%This is because if the uncertainty in prediction is beyond half of the gap of two distributions, it is inevitable to be confused. 
% 
Also note that the delay involved in detecting a distribution change is nearly order-optimal, i.e.,  $d=O(1/\min_i|\pi_{1i} - \pi_{2i}|^2)$ time, which is known to be necessary for distinguishing two distributions   \cite{robin-lai-1985}. 
Moreover, $d=O(\ln(1/\delta))$ shows that a logarithmic window size is enough to ensure a high detection accuracy. 

%The following simple corollary then shows that 
%The following corollary shows that for any interval with a different distribution than 
%Our second lemma concerns the time it takes for detecting a distribution change take
%\begin{lemma}
%Suppose $\bv{\pi}(t)=\bv{\pi}_0$ for $t\leq t^*$ while $\bv{\pi}(t)=\bv{\pi}_1$ for $t>t^*$, where $\min| \pi_{1i}-\pi_{0i}|\geq\epsilon_d>0$. Then, the \ade{} detector declares change with probability $ab$ within $O(\log()/\epsilon_d^2)$. $\Diamond$
%\end{lemma}

\subsection{Utility and delay}
In this section, we look at the utility and delay performance of \plc. 
%Our first result shows that for a stationary system, \plc{} achieves a near-optimal utility-delay tradeoff, amongst the best previously known algorithms, e.g., \cite{huang-learning-sig-14}. 
To state our results, we first define the following structural property of the  system. 
\begin{definition}
A system is called polyhedral with parameter $\rho>0$ under distribution $\bv{\pi}$ if the dual function $g(\bv{\gamma}, \bv{\pi})$ satisfies: 
\begin{eqnarray}
g(\bv{\gamma}^*, \bv{\pi})\geq g(\bv{\gamma}, \bv{\pi})+\rho\|\bv{\gamma}^*_{\bv{\pi}}-\bv{\gamma}\|.\,\,\,\Diamond\label{eq:polyhedral}
\end{eqnarray}
\end{definition}

The polyhedral property often holds for practical systems, especially when  action sets are finite (see \cite{huangneely_dr_tac} for more discussions). 
% 
%We then define $T_c(t)$ to be the \emph{sampling completion time}, which measures the time it takes to execute step (ii) in \plc{} after the most recent execution of step (i). 
% 
%For instance, when there is no prediction, i.e., $w=0$, we always have $T_c(t)=\Theta(V^c)$ when step (ii) is executed, because at least $V^c$ samples need to be collected. In general, it can be seen that $T_c(t)=(V^c - w)^+$. %, as shown in the following lemma: 
%\begin{lemma}

%\end{lemma}
\subsubsection{Stationary system} 
We first consider stationary systems, i.e., $\bv{\pi}(t)=\bv{\pi}$. %, a good benchmark for any adaptive algorithms. 
%A good adaptive algorithm should 
% 
Our   theorem shows that \plc{} achieves the near-optimal utility-delay tradeoff for stationary networks. %among the best known results, e.g., \cite{huang-learning-sig-14}. 
This result is important, as any good adaptive algorithm must be able to handle stationary settings well. 
% 
%In the theorem, we set: 

%\huang{here we can prove the results assuming that $\delta(w)$ is small when $w$ is properly chosen. We will be able to prove the results similarly, e.g., suppose $\delta(w) = O(1/V^4)$?}

\begin{theorem}\label{theorem:plc-stationary} 
Suppose $\bv{\pi}(t)=\bv{\pi}$, the system is polyhedral with $\rho=\Theta(1)$,  $e_w>0$, and $\bv{q}(0)=\bv{0}$. Choose $0<\epsilon_d<\epsilon_0\triangleq 2(w+1)e_w/d$,  $d=\log(V)^3/\epsilon_d^2$, $T_l= \max(V^c, e_w^{-2})$ for $c\in(0, 1)$ and 
%$T_l=V^c$ for some $c\in(0, 1)$ and 
%\huang{change $\theta$ value}
\begin{eqnarray}
\theta=  2\log(V)^2(  1+\frac{V}{\sqrt{T_l}}).\label{eq:theta-value}
\end{eqnarray} 
%$\theta=2\log(V) \min(V^{1-c/2}\log(V),  \frac{V\log(V)}{\sqrt{V^2\land (w-d)}})$  
%$\theta=2V^{1-c/2}\log(V)^2$ 
%$\delta=V^{-\log(V)}$, and $d=3\log(V)^2/\epsilon^2$. 
Then,  with a sufficiently large $V$,  \plc{} achieves the following:  
%\begin{achievements}
%\item[

(a) Utility: $f^{\plc}_{\text{av}} = f^{\bv{\pi}}_{\text{av}}+O(1/V)$

(b)  Delay: For all but an $O(\frac{1}{V})$ 
%$O(\frac{ b_2d_wV\log(V) +V \log(V)^2  + V(V^c-w)^+    }{V^{2-c/2}})$ $+$ $O(\frac{1}{V})$ 
fraction of traffic, the average packet delay is $D = O(\log(V)^2)$ %==add queueing==

(c)  Dropping: The packet dropping rate is $O(V^{-1})$. $\Diamond$
%\end{achievements}
%Here $d_w\triangleq b_1V^{1-c/2}\log(V)\land \frac{b_2V\log(V)}{\sqrt{V^2\land (w-d)}}$. $\Diamond$
%\begin{eqnarray}
%f^{\mathtt{PLC}}_{\text{av}} = f^{\bv{\pi}}_{\text{av}}+O(1/V), \,\,D = O(\log(V)^2). 
%\end{eqnarray}
%Here $D$ denotes the average packet delay. $\Diamond$ 
%\begin{achievements}
%\item $f^{\mathtt{PLC}}_{\text{av}} \leq f^{\bv{\pi}}_{\text{av}}+O(1/V)$
%\item $D\leq $ 
%\end{achievements}
\end{theorem}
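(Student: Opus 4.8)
The plan is to combine a standard drift-plus-penalty argument for the utility bound with an exponential-attraction argument (exploiting the polyhedral property) and the LIFO discipline for the delay bound, using the learning guarantees of Lemma~\ref{lemma:detection} to control the empirical multiplier. First I would establish that the learning component is accurate. By Lemma~\ref{lemma:detection}(b), in a stationary system $W_m(t)$ reaches $T_l$ and stays there with probability $1-O(\delta + wMT_l^{-2\log T_l})$, so for all but a vanishing fraction of slots $\bv{\pi}_{a}(t)=\hat{\bv{\pi}}^m(t)$. A Hoeffding/McDiarmid concentration bound on the empirical distribution then gives $\|\hat{\bv{\pi}}^m(t)-\bv{\pi}\|_{tv}=O(\log(T_l)/\sqrt{T_l})$ with high probability; since $T_l=\max(V^c,e_w^{-2})$, this error beats $e_w$, which is precisely why learning improves on pure prediction. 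Invoking Assumption~\ref{assumption:unique} together with the polyhedral structure, the map $\bv{\pi}\mapsto\bv{\gamma}^*_{\bv{\pi}}$ is locally Lipschitz, so $\|\bv{\gamma}^*(t)-\bv{\gamma}^*_{\bv{\pi}}\|=O(\log(T_l)/\sqrt{T_l})$ for most $t$.

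For the utility bound (a), I would run the drift-plus-penalty machinery on the augmented backlog $\bv{Q}(t)$, treating it as a Backpressure queue. The \plc{} rule (\ref{eq:QLAeq}) minimizes the drift-plus-penalty term slot by slot, so I would compare its drift against the randomized $\bv{\pi}$-only policy guaranteed by Assumption~\ref{assumption:bdd-LM}. The time-varying offset $(\gamma_j^*(t)-\theta)^+$ is piecewise constant and, by Lemma~\ref{lemma:detection}(b), changes only rarely in a stationary system, so between changes the standard analysis applies and the rare changes contribute negligibly to the time average. Telescoping and invoking the primal-dual relation (\ref{eq:primal-dual-relation}) then yields $f^{\plc}_{\text{av}}=f^{\bv{\pi}}_{\text{av}}+O(1/V)$.

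The delay bound (b) is the crux and relies on the polyhedral property together with LIFO. Using (\ref{eq:polyhedral}) I would establish a multi-slot drift lemma showing that $\bv{Q}(t)$ is exponentially attracted to $\bv{\gamma}^*_{\bv{\pi}}$: whenever $\|\bv{Q}(t)-\bv{\gamma}^*_{\bv{\pi}}\|$ exceeds an $O(\log(V)^2)$ threshold, the conditional drift is at most a negative constant $-\Theta(\rho)$, modulo the $O(\log(T_l)/\sqrt{T_l})$ learning perturbation absorbed by the $V/\sqrt{T_l}$ term in the choice (\ref{eq:theta-value}). A Hajek-type supermartingale argument then gives a geometric tail bound on $\|\bv{Q}(t)-\bv{\gamma}^*_{\bv{\pi}}\|$, so the physical queue $q_j(t)=Q_j(t)-(\gamma_j^*(t)-\theta)^+$ fluctuates within an $O(\log(V)^2)$-width band with overwhelming probability. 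Because packets are served Last-In-First-Out, arrivals landing in this fluctuating top band are cleared within $O(\log(V)^2)$ slots; only packets trapped in the rarely drained lower portion see large delay, and the geometric tail forces these to be at most an $O(1/V)$ fraction of the traffic. Little's law then converts the backlog bound into the stated per-packet delay.

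For the dropping bound (c), packets are discarded only at reset points (the $\bv{q}(t+w+1)=\bv{0}$ step), which in a stationary system are false alarms. Lemma~\ref{lemma:detection}(b) bounds the per-slot false-detection probability by $O(\delta+wMT_l^{-2\log T_l})$ with $\delta=V^{-\log V}$, so resets occur on an $O(V^{-1})$ fraction of slots, and since each reset discards at most $O(\log(V)^2+\theta)$ packets, the time-average dropping rate is $O(V^{-1})$. The main obstacle throughout is the delay analysis: one must simultaneously control the learning noise in $\bv{\gamma}^*(t)$ and the $(\cdot)^+$ nonlinearities so that the polyhedral attraction survives the perturbation, and carefully account for the LIFO layers to pin the slow traffic to an $O(1/V)$ fraction while confirming that the transient phase before $W_m(t)$ reaches $T_l$ is negligible in the time averages.
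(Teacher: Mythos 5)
Your proof skeleton matches the paper's (drift-plus-penalty for utility, polyhedral attraction with a Hajek-type exponential tail plus the LIFO band for delay, rare resets for dropping), but two quantitative steps are wrong and one key step is glossed. First, the multiplier perturbation: Lemma~\ref{lemma:estimate-error} gives $\|\bv{\gamma}^*(t)-\bv{\gamma}^*\|\leq b_0V\epsilon$ --- the map $\bv{\pi}\mapsto\bv{\gamma}^*_{\bv{\pi}}$ has Lipschitz constant $\Theta(V)$ because the dual objective carries the factor $V$ --- so with $\|\hat{\bv{\pi}}^m(t)-\bv{\pi}\|_{tv}=O(\log(T_l)/\sqrt{T_l})$ the learning noise is $O(V\log(T_l)/\sqrt{T_l})$, i.e.\ polynomially large (the paper's $d_\gamma=4b_0V^{1-c_1/2}\log(V)$, where $T_l=V^{c_1}$), not the $O(\log(T_l)/\sqrt{T_l})$ you claim. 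That missing $V$-factor is the entire reason $\theta$ in (\ref{eq:theta-value}) must carry the $V/\sqrt{T_l}$ term; under your estimate that term would be pointless. Your delay argument survives only because the error shifts the center $\hat{\bv{\theta}}$ of the band and not its $O(\log^2(V))$ width, but the correct scaling matters elsewhere. Second, your dropping arithmetic is inconsistent: an $O(V^{-1})$ reset fraction times $O(\theta)=O(V^{1-c/2}\log^2(V))$ packets per reset gives $O(V^{-c/2}\log^2(V))$, which does not prove (c); moreover the $O(V^{-1})$ reset-fraction premise does not follow from Lemma~\ref{lemma:detection}(b), whose false-alarm probability is $\delta+(w+1)MT_l^{-2\log(T_l)}$ with $\delta=V^{-\log(V)}$ --- quasi-polynomially small. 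The correct accounting, and the paper's, is: cycles last at least $V^{2+c_1}$ slots with probability $1-O(V^{-3})$, and the expected backlog at a reset is $O(V)$ --- shown via Lemma~\ref{lemma:queue-bdd-drift} applied to $Z(t)=\|(\bv{q}(t)-\hat{\bv{\theta}})_+\|$, split into the cases $e_w=\Omega(V^{-c/2})$ and $e_w=O(V^{-c/2})$ --- yielding a rate $O(V^{-1-c_1})=O(V^{-1})$.

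Third, in part (a) the statement ``the offset changes rarely, so the standard analysis applies between changes'' hides the crux. The cross term $\Delta_1(t)=\sum_j(\gamma_j^*(t)-\theta)^+[\mu_j(t)-A_j(t)]$ has per-slot magnitude up to $O(V\log(V))$ (the learning step caps $\bv{\gamma}^*(t)$ at $V\log(V)\cdot\bv{1}$), so rarity of multiplier changes alone does not bound its time average by $O(1)$, which is what (\ref{eq:aug-drift-step0}) requires. The paper handles it with a renewal-reward argument: reset points zero the queues, making the intervals genuine renewal cycles; within a cycle the multiplier is constant, so the sum telescopes as $\sum_{t\in\mathcal{C}'}[\mu_j(t)-A_j(t)]\leq q_{js}-q_{je}+O(|\mathcal{C}|/V^{\log V})$ with $\expect{q_{js}}=O(T_l+d)$, and $\expect{|\mathcal{C}|}\geq V^{2+c_1}/2$ amortizes the $O(V\log(V)(T_l+d))$ per-cycle cost to $O(1)$. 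Your alternative of running drift-plus-penalty directly on $\bv{Q}(t)$ is not free either: $Q_j(t)$ reflects at the level $(\gamma_j^*(t)-\theta)^+$ rather than at zero, and each boundary hit injects an $O(V^2\log^2(V))$ error into the quadratic drift, so you would need the part-(b) concentration (boundary hits occurring with probability $V^{-\Theta(\log V)}$) already inside part (a), together with a separate treatment of the pre-$T_l$ transient --- exactly the renewal bookkeeping your sketch gestures at but does not supply.
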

\begin{proof}
%Omitted due to space limitation. Please see \cite{plc-techreport-17} for proof details. 
See Appendix B. 
\end{proof}
%==prediction power comes into here==

Here $c$ is a constant used for deciding the learning time $T_l$. Choosing $\epsilon=1/V$, we see that \plc{} achieves the near-optimal $[O(\epsilon), O(\log(1/\epsilon)^2)]$ utility-delay tradeoff. Moreover,   prediction enables \plc{} to also greatly reduce the queue size (see Part (b) of Theorem \ref{theorem:plc-nonstationary}). 
%Also note that \plc{} is able to reduce the q
%with increasing prediction power, 
% 
%==talk about queue reduction with $w$==
%
Our result is different from the results  in \cite{zhang-predict-mm1-14} and  \cite{huang-predictive-14} for proactive service settings,  where delay vanishes as prediction power increases. This is because we only assume observability of future states but not pre-service, and highlights the difference between pre-service and pure prediction. 
Note that the performance of \plc{} does not depend heavily on $\epsilon_d$  in Theorem \ref{theorem:plc-stationary}. The value $\epsilon_d$  is more crucial for non-stationary systems, where a low false-negative rate is critical for performance. 
%Also note that we only need $d=O(\log(V))^2$ for achieving an accuracy $\delta=V^{-\log(V)}$. 
% 
Also note that although packet dropping can occur during operation, the fraction of packets dropped is very small, and the resulting performance guarantee cannot be obtained by simply dropping the same amount of packets, in which case the delay will still be $\Theta(1/\epsilon)$. 

Although Theorem \ref{theorem:plc-stationary} has a similar form as those in  \cite{huang-rlc-15} and \cite{huang-learning-sig-14}, the analysis is very different, in that (i) prediction error must be taken into account, and (ii) \plc{} performs sequential detection and decision-making. 
%Also note that the performance does not depend on prediction, i.e., $w$, too much. This is due to the stationarity of the system, in which short term behavior does not matter the most.  

%======stop here======= 

%Despite the similarity in the theorem with previous results, the analysis is different. In that it 

%- Queue smooth property, i.e., the queue size does not fluctuate. Try not to re-adjust the queue. 
\subsubsection{Piecewise stationary system} 
We now turn to the non-stationary case and consider the scenario where $\bv{\pi}(t)$ changes over time. In this case, we see that prediction is critical as it significantly accelerates convergence and helps to achieve good performance  when each distribution only lasts for a finite time. 
As we know that when the distribution can change arbitrarily, it is hard to even define optimality. 
Thus, we consider the case when the system is piecewise stationary, i.e., each distribution lasts for a duration of time, and study how the algorithm optimizes the performance for each distribution. 
%react to such changes. 
% 

The following theorem summarizes the performance of \plc{} in this case. In the theorem, we define $D_k\triangleq t_k+d-t^*$, where $t^*\triangleq \sup\{t<t_k+d: t\,\, \text{is a reset point}\}$, i.e., the most recent  time when  a cycle with size no smaller than $T_l$ ends (recall that reset points are marked in step (ii) of \ade{} and $d\geq w+1$). 
%That is,  $D_k=t_k-t_{k'+1}-1$. 

\begin{theorem}\label{theorem:plc-nonstationary} 
Suppose $d_k\geq 4d$ and the system is polyhedral with $\rho=\Theta(1)$   for all $k$,. Also, suppose there exists $\epsilon^*_0=\Theta(1)>0$ such that $\epsilon^*_0\leq\inf_{k, i}|\pi_{ki} - \pi'_{k-1i}|$ and $\bv{q}(0)=\bv{0}$. 
%\footnote{Here $\eta$ can be defined to be the minimum drift under different distributions.} 
%Also, suppose no distribution lies in the convex hull of other distributions and there exists $\epsilon^*_0=\Theta(1)>0$ such that $\epsilon^*_0\leq\inf_{k, i}|\pi_{ki} - \pi'_{k-1i}|$, where $\bv{\pi}_{k-1i}$ lies in the convex hull of distributions prior to $k$ and $\bv{q}(0)=\bv{0}$. 
% 
Choose $\epsilon_d<\epsilon_0^*$  in \ade{}, and choose $d$, $\theta$ and $T_l$ as in Theorem \ref{theorem:plc-stationary}. Fix any distribution $\bv{\pi}_k$ with length $d_k= \Theta(V^{1+a}T_l)$ for some $a=\Theta(1)>0$.\footnote{The constant $a$ here is introduced to show that our results hold as long as $d_k$ is larger than $O(VT_l)$.} 
Then, under \plc{} with a sufficiently large $V$,   if $\script{W}_m(t_k)$ only contains samples after $t_{k-1}$, we achieve the following with probability $1- O(V^{-3\log(V)/4})$:  

(a) Utility:  $f^{\plc}_{\text{av}} = f^{\bv{\pi}_k}_{\text{av}}+O(1/V) + O(\frac{D_k\log(V)}{T_lV^{1+a}})$ 

(b) Queueing:  $\overline{q}_{av} = O((\min(V^{1-c/2}, Ve_w)+1)\log^2(V) + D_k+d)$. 

(c) In particular, if $d_{k-1}=\Theta(T_lV^{a_1})$ for $a_1=\Theta(1)>0$ and $W_m(t_{k-1})$ only contains samples after $t_{k-2}$, then with probability $1-O(V^{-2})$,  $D_k = O(d)$,  $f^{\plc}_{\text{av}} = f^{\bv{\pi}_k}_{\text{av}}+O(1/V)$ and $\overline{q}_{av}=O(\min(V^{1-c/2}, Ve_w)+\log^2(V))$. 
$\Diamond$ 
%, where  $d_{\gamma}\triangleq4b_0V\log(V)/\sqrt{T_l}$, and $b_0$, $b_1$ and $b_2$ are $\Theta(1)$ constants.  $\Diamond$ 

%O(d_{\gamma}\log(V) + \log(V)^2 + \min(V^c, Ve_w))
%\min(b_1V^{1-c/2}\log(V),  \frac{b_2V\log(V)}{\sqrt{V^2\land (w-d)^+}})$, and  $b_1$ and $b_2$ are $\Theta(1)$ constants.  $\Diamond$
%\begin{eqnarray}
%f^{\mathtt{PLC}}_{\text{av}} = f^{\bv{\pi}}_{\text{av}}+O(1/V), \,\,D = O(\log(V)^2). 
%\end{eqnarray}
%Here $D$ denotes the average packet delay. $\Diamond$ 
%\begin{achievements}
%\item $f^{\mathtt{PLC}}_{\text{av}} \leq f^{\bv{\pi}}_{\text{av}}+O(1/V)$
%\item $D\leq $ 
%\end{achievements}
\end{theorem}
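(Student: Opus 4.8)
The plan is to decompose each stationary interval $\mathcal{I}_k$ into a short \emph{transient} phase and a long \emph{steady} phase, bound each separately, and then average over $\mathcal{I}_k$. First I would pin down the transient using Lemma \ref{lemma:detection}(a): the separation hypothesis $\epsilon_0^*\leq\inf_{k,i}|\pi_{ki}-\pi'_{k-1,i}|$ together with the choice $\epsilon_d<\epsilon_0^*$ guarantees that, once $\mathcal{W}_d(t)$ has filled with $\bv{\pi}_k$-samples while $\mathcal{W}_m(t)$ still holds $\bv{\pi}_{k-1}$-samples, the detection test triggers a reset within $O(d)$ slots with probability at least $1-\delta$. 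This locates the reset point $t^*$, hence fixes $D_k=t_k+d-t^*$, and shows $\mathcal{W}_m$ is emptied and begins accumulating fresh $\bv{\pi}_k$-samples. I would then invoke Lemma \ref{lemma:detection}(b) to show that, once re-initialized, $\mathcal{W}_m$ grows for $T_l$ slots without a false reset, so that after $O(D_k+T_l)$ slots the estimate is $\bv{\pi}_{a}(t)=\hat{\bv{\pi}}^m(t)$ with $\|\hat{\bv{\pi}}^m(t)-\bv{\pi}_k\|_{tv}=O(M\log(T_l)/\sqrt{T_l})$ by a standard concentration bound, and the empirical multiplier $\bv{\gamma}^*(t)$ lands in a neighborhood of $\bv{\gamma}^*_{\bv{\pi}_k}$.

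Second, for the steady phase I would reduce to the stationary result. Conditioned on the high-probability event that the detector produces no false positive over the remainder of $\mathcal{I}_k$, and using that the packet-dropping rule has reset $\bv{q}$ to $\bv{0}$ at the transition, the system behaves exactly as a stationary network under $\bv{\pi}_k$ started from empty queues; hence the Appendix B drift analysis behind Theorem \ref{theorem:plc-stationary} applies and yields steady-phase cost $f^{\bv{\pi}_k}_{\text{av}}+O(1/V)$. The distribution-estimation error $O(1/\sqrt{T_l})=O(\min(V^{-c/2},e_w))$ propagates, through the polyhedral curvature of $g(\cdot,\bv{\pi}_k)$ and the $\Theta(V)$ sensitivity of the multiplier, into a multiplier error $O(V/\sqrt{T_l})=O(\min(V^{1-c/2},Ve_w))$. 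This is precisely the quantity absorbed by the offset $\theta=2\log^2(V)(1+V/\sqrt{T_l})$ in $Q_j(t)=q_j(t)+(\gamma_j^*(t)-\theta)^+$, so the effective backlog stays of order $\theta+O(\log^2 V)=O((\min(V^{1-c/2},Ve_w)+1)\log^2 V)$.

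Third, I would assemble the time-averages over $\mathcal{I}_k$. For utility, the transient has length $O(D_k)$ with per-slot cost deviation controlled (through the multiplier magnitude and dual gap) by $O(\log V)$; dividing by the interval length $d_k=\Theta(V^{1+a}T_l)$ gives the $O(D_k\log(V)/(T_lV^{1+a}))$ term, while the steady phase contributes $O(1/V)$. For queueing, the transient adds $O(D_k+d)$ to the average backlog (queues are dropped at the transition and only rebuild during the $O(D_k+d)$ transient) and the steady phase adds $O((\min(V^{1-c/2},Ve_w)+1)\log^2 V)$. The probability accounting unions the per-slot detection failures (each $O(\delta)=O(V^{-\log V})$ from Lemma \ref{lemma:detection}, plus the $T_l^{-2\log(T_l)}$ term) over the $\Theta(V^{1+a}T_l)$ slots of $\mathcal{I}_k$; since these are super-polynomially small in $\log V$, the union bound remains $O(V^{-3\log(V)/4})$. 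Part (c) then follows by applying the same transient analysis to interval $k-1$: because $d_{k-1}=\Theta(T_lV^{a_1})$ and $\mathcal{W}_m(t_{k-1})$ is clean, $\mathcal{W}_m$ fills to $T_l$ and marks a reset point at $t_k$ up to $O(d)$, forcing $D_k=O(d)$; substituting into (a) and (b) gives the simplified bounds, and the weaker $1-O(V^{-2})$ guarantee reflects the single additional event that $\mathcal{W}_m$ fills during the shorter interval $k-1$.

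The main obstacle I expect is the steady-phase drift analysis with an imperfect, slot-varying multiplier: one must show that $\bv{\gamma}^*(t)$, which tracks $\bv{\gamma}^*_{\bv{\pi}_k}$ only up to the $O(1/\sqrt{T_l})$ estimation error and is recomputed every slot, still drives the augmented-queue Lyapunov drift toward the correct fixed point without the residual error accumulating over the long horizon. Propagating the estimation error through the polyhedral curvature into a backlog bound of exactly order $\min(V^{1-c/2},Ve_w)\log^2 V$, uniformly over $\Theta(V^{1+a}T_l)$ slots and jointly with the low-probability failure events, is the delicate technical core; by comparison the detection and averaging steps are routine given Lemma \ref{lemma:detection} and Theorem \ref{theorem:plc-stationary}.
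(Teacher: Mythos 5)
Your high-level architecture (transient/steady decomposition, reuse of the Appendix B drift analysis with the $O(V/\sqrt{T_l})=O(\min(V^{1-c/2},Ve_w))$ multiplier error absorbed by $\theta$, then averaging over $d_k=\Theta(V^{1+a}T_l)$) matches the paper's Appendix C, and your derivation of part (c) is essentially the paper's. However, there are two genuine gaps. First, you misread what a ``reset point'' is, and consequently what $D_k$ measures and when queues are dropped. In \plc{}, packet dropping (and the marking of a reset point) occurs only when $W_m(t-1)=T_l$, i.e., only when the \emph{previous} distribution lasted long enough for $\mathcal{W}_m$ to fill; a step-(i) window reset at the start of $\mathcal{I}_k$ neither drops packets nor defines $t^*$. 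So your claims that Lemma \ref{lemma:detection}(a) ``locates the reset point $t^*$'' and that ``the packet-dropping rule has reset $\bv{q}$ to $\bv{0}$ at the transition'' are false in the generality of parts (a)--(b): $t^*$ may lie several intervals in the past, $\bv{q}(t_k)=O(D_k)$ can be large, and the steady phase must be analyzed starting from $\bv{q}(t_s)=\Theta(D_k+T_l+d-w)$ rather than from empty queues. This is exactly how the $D_k$ terms enter the paper's bounds: through $\expect{\text{cost}}\leq r(D_k+T_l+d-w)b_2\delta_{\max}V\log(V)+\cdots$ for utility, and through $\tilde{\bv{\theta}}=\Theta((\min(V^{1-c/2},Ve_w)+1)\log^2(V)+D_k+d-w)$ in the Lemma \ref{lemma:queue-bdd-drift} drift argument for backlog. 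Your proposal reproduces the right $D_k$ terms but on an inconsistent basis --- if queues really were zeroed at every transition, those terms would collapse to $O(d)$, and indeed the paper's remark (ii) after the theorem states explicitly that the $D_k$ dependence exists \emph{because} dropping is not performed after short intervals.

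Second, you assume the detection at the start of $\mathcal{I}_k$ cleanly empties $\mathcal{W}_m$ so that it ``begins accumulating fresh $\bv{\pi}_k$-samples,'' and you then invoke Lemma \ref{lemma:detection}(b), which is stated for a fully stationary system with uncontaminated windows. \ade{} does not guarantee a clean restart: if a step-(i) reset fired within the last $d$ slots before $t_k$, the restarted windows straddle the boundary and retain $\bv{\pi}_{k-1}$-samples, so $\hat{\bv{\pi}}^m(t)$ estimates a mixture $\alpha\bv{\pi}_{k-1}+(1-\alpha)\bv{\pi}_k$. The paper devotes a separate result (Lemma \ref{lemma:samples}) to showing that with probability $1-O(V^{-3\log(V)/4})$ only $o(d)$ old samples survive past $t_k+d$, and then verifies $|\hat{\pi}^m_i(t)-\pi_{ki}|\leq\epsilon_d/8$, which is what drives the per-test false-alarm probability down to $O(V^{-\log(V)/3})$ and makes the no-false-positive event $\mathcal{E}_6$ hold with probability $1-O(V^{-2})$ over the $\Theta(V^{1+a}T_l\log(V))$-slot horizon. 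Without this step, your union bound over the steady phase has no valid per-slot failure probability to sum, since the hypotheses of Lemma \ref{lemma:detection}(b) are not met. Ironically, you flag the steady-phase drift with a slot-varying multiplier as the delicate core, but that part is a near-verbatim reuse of the stationary analysis of Theorem \ref{theorem:plc-stationary}; the genuinely new difficulty in this theorem --- which the paper itself highlights in its remarks --- is precisely the contaminated-window issue your proposal skips.
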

\begin{proof}
%Omitted due to space limitation. Please see \cite{plc-techreport-17} for proof details. 
See Appendix C. 
\end{proof}

%\huang{The constant }
A few remarks are in place. (i) Theorem \ref{theorem:plc-nonstationary}  shows that, with an increasing prediction power, i.e., a smaller $e_w$, it is possible to simultaneously reduce network queue size and the time it takes to achieve a desired average performance (even if we do not execute actions ahead of time). 
The requirement $d_k= \Theta(V^{1+a}T_l)$ can be strictly less than the $O(V^{2-c/2+a})$ requirement for \rlc{} in \cite{huang-rlc-15} and the $O(V^2)$ requirement of \bp{} for achieving the same average utility. This implies that \plc{}  finds a good system operating point faster, a desirable feature for network  algorithms. 
(ii) The dependency on $D_k$ is necessary. This is because \plc{} does not perform packet dropping if  previous intervals do not exceed length $T_l$. 
As a result, the accumulated backlog can affect decision making in the current interval. Fortunately the queues are shown to be small and do not heavily affect performance (also see simulations).  
%It is also possible to obtain expected bounds of 
%On the other hand, the 
(iii) To appreciate the queueing result, note that \bp{} (without learning) under the same setting will result in an $O(V)$ queue size. 
% 
%(iv) Also note that the assumption 
%In fact, if $d_{k-1}=\Theta(T_l\log(V))$, then with probability at least $1-V^{-2}$ that $D_k = O(w)$, in which case the average queue size is $O(\max(V^c, Ve_w)+\log^2(V))$. 
%
%we do not impose other constraints on the ``mixed'' distributions at the conjunction timeslots between intervals.  
%Here the $d_{k-1}= \Theta(V^{a_1}T_l)$ condition is made only to facilitate analysis. 
%to ensure that detection is possible. As discussed before, $d$ is close to the lower bound (in order sense) for detecting changes with high probability. Thus, if a distribution only lasts for a very short time, it may not be detected, although it will also not affect performance much.  
% 

Compared to the analysis in \cite{huang-rlc-15}, one complicating factor in proving Theorem \ref{theorem:plc-nonstationary}  is that \ade{} may not always throw away samples from a previous interval. 
%that do not belong to $\mathcal{I}_k$. 
% 
%This can cause error in evaluating the total variation distance.  
% 
Instead, \ade{} ensures that with high probability, only $o(d)$ samples from a previous interval will remain. This ensures high learning accuracy and fast convergence of \plc{}. 
One interesting special case not covered in the last two theorems is when $e_w=0$. In this case,  prediction is perfect and $T_l=\infty$, and \plc{} always runs with $\bv{\pi}_a(t)=\frac{1}{w+1}\sum_{k=0}^{w}\hat{\bv{\pi}}(t+k)$, which is the exact average distribution. 
For this case, we have the following result. 

%\huang{stop here}
\begin{theorem}\label{theorem:perfect-plc}  
Suppose $e_w=0$ and $\bv{q}(0)=\bv{0}$. Then, \plc{} achieves the following: 

(a) Suppose $\bv{\pi}(t)=\bv{\pi}$ and the system is polyhedral with $\rho=\Theta(1)$. Then, under the conditions of Theorem  \ref{theorem:plc-stationary}, \plc{} achieves the $[O(\epsilon), O(\log(1/\epsilon)^2)]$ utility-delay tradeoff. 

(b) Suppose $d_k\geq d\log^2(V)$ and the system is polyhedral with $\rho=\Theta(1)$ under each $\bv{\pi}_k$. Under the  conditions of Theorem \ref{theorem:plc-nonstationary},  for an interval $d_k\geq V^{1+\epsilon}$ for any $\epsilon>0$, \plc{} achieves that  $f^{\plc}_{\text{av}} = f^{\bv{\pi}_k}_{\text{av}}+O(1/V)$ and $\expect{\bv{q}(t_k)}=O(\log^4(V))$. $\Diamond$ 
\end{theorem}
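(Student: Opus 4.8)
The plan is to exploit the two simplifications forced by $e_w=0$. First, since $T_l=\max(V^c,e_w^{-2})=\infty$, the trigger $W_m(t-1)=T_l$ in the learning step and the reset condition in step (ii) of \ade{} never fire, so \plc{} performs \emph{no} packet dropping and \ade{} is effectively inert: its output is always $\bv{\pi}_a(t)=\frac{1}{w+1}\sum_{k=0}^w\hat{\bv{\pi}}(t+k)$, so the detection parameters $(\epsilon_d,d)$ do not affect the algorithm's decisions (I may take $d=\Theta(\log^3 V)$). Second, because prediction is perfect, $\hat{\bv{\pi}}(t+k)=\bv{\pi}(t+k)$, so $\bv{\pi}_a(t)$ is the \emph{exact} average of the true distributions over $[t,t+w]$. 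Hence the empirical dual $g(\cdot,\bv{\pi}_a(t))$ in (\ref{eq:dualproblem-empirical}) is the true dual for that average distribution and, by Assumption \ref{assumption:unique}, $\bv{\gamma}^*(t)$ is its exact optimal multiplier. This removes all estimation/prediction error and lets me reuse the Lyapunov-drift machinery of Theorems \ref{theorem:plc-stationary} and \ref{theorem:plc-nonstationary} with the error terms set to zero.

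For Part (a), when $\bv{\pi}(t)=\bv{\pi}$ every predicted distribution equals $\bv{\pi}$, so $\bv{\pi}_a(t)\equiv\bv{\pi}$ and $\bv{\gamma}^*(t)\equiv\bv{\gamma}^*_{\bv{\pi}}$ for all $t$; moreover $\theta=2\log^2(V)$ since $V/\sqrt{T_l}=0$ in (\ref{eq:theta-value}). \plc{} thus reduces to Backpressure with its queue vector shifted by the exact multiplier. I would then run the drift-plus-penalty argument exactly as in the proof of Theorem \ref{theorem:plc-stationary}: the cost bound follows from $f^{\bv{\pi}}_{\text{av}}=g^*_{\bv{\pi}}$ in (\ref{eq:primal-dual-relation}) and gives $f^{\plc}_{\text{av}}=f^{\bv{\pi}}_{\text{av}}+O(1/V)$, while the polyhedral inequality (\ref{eq:polyhedral}) provides a uniform drift of the augmented queue $\bv{Q}(t)$ toward $\bv{\gamma}^*_{\bv{\pi}}$, so that $\bv{Q}(t)$ concentrates within $O(\log^2 V)$ of $\bv{\gamma}^*_{\bv{\pi}}$ and the true backlog stays $O(\log^2 V)$ for all but an $O(1/V)$ fraction of traffic. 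Setting $\epsilon=1/V$ yields the $[O(\epsilon),O(\log^2(1/\epsilon))]$ tradeoff, with no dropping term to account for.

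For Part (b), I would first note that because prediction looks forward, the window $[t,t+w]$ lies entirely inside $\mathcal{I}_k$ for all $t\in[t_k,\,t_{k+1}-1-w]$ (a range covering almost all of $\mathcal{I}_k$, since $d_k\geq d\log^2 V\gg w$), so on this range $\bv{\pi}_a(t)=\bv{\pi}_k$ and the multiplier is the exact $\bv{\gamma}^*_{\bv{\pi}_k}$; only the last $O(w)$ slots of each interval see an interpolated $\bv{\pi}_a(t)$. The starting-queue bound $\expect{\bv{q}(t_k)}=O(\log^4 V)$ is then established inductively across intervals (base case $\bv{q}(0)=\bv{0}$): assuming $\bv{q}(t_{k-1})=O(\log^4 V)$, the slack condition (\ref{eq:slackness}) together with the polyhedral attraction drives the augmented queue down at a linear rate whenever it exceeds its $O(\log^2 V)$ equilibrium band, so it drains to that band within $O(\log^4 V)$ slots; since the exact-multiplier phase of $\mathcal{I}_{k-1}$ has length $d_{k-1}-w\geq d\log^2 V-w=\Theta(\log^5 V)$, which dwarfs the drain time, the queue is back in the band by $t_k-w$, grows by at most $O(w\delta_{\max})$ over the $w$ straddling slots, and an exponential-tail bound on the band excursions gives $\expect{\bv{q}(t_k)}=O(\log^4 V)$, closing the induction. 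Finally, feeding this small initial backlog and the exact multiplier into the drift-plus-penalty bound over the long interval $d_k\geq V^{1+\epsilon}$ yields $f^{\plc}_{\text{av}}=f^{\bv{\pi}_k}_{\text{av}}+O(1/V)$, since the contributions of the $O(\log^4 V)$ initial queue and the $O(w)$ transition slots are $O((\log^4 V+w)/d_k)=o(1/V)$.

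The main obstacle is the inductive starting-queue bound of Part (b): with $T_l=\infty$ there is no packet dropping to reset the backlog at a change point, so I must show the queue genuinely drains within each interval rather than accumulating across them. The delicate points are (i) bounding queue growth over the $O(w)$ window-straddling slots, where $\bv{\pi}_a(t)$ is a convex mixture of $\bv{\pi}_{k-1}$ and $\bv{\pi}_k$ and the drift need not point toward either equilibrium—here I bound the increase crudely by $O(w\delta_{\max})$ and absorb it into the $O(\log^4 V)$ budget (using $w\leq d$); and (ii) converting the per-slot polyhedral attraction into a bound on $\expect{\bv{q}(t_k)}$ via an exponential-supermartingale/tail argument. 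The induction closes precisely because the hypothesis $d_k\geq d\log^2 V=\Theta(\log^5 V)$ forces the drain time $O(\log^4 V)$ to fit comfortably inside each interval.
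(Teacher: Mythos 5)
Your proposal is correct and takes essentially the same route as the paper: Part (a) reduces to the stationary analysis with the exact multiplier (the paper simply cites the corresponding \olac{} theorems rather than re-running the drift argument), and Part (b) is the paper's own induction over intervals --- crudely bounding the queue growth over the transient slots where $\bv{\pi}_a(t)$ is a mixture of $\bv{\pi}_{k-1}$ and $\bv{\pi}_k$, invoking the exponential drift bound (Lemma \ref{lemma:queue-bdd-drift}) with $d_k \geq d\log^2(V)$ to pull $\expect{\bv{q}(t_{k+1})}$ back within the $O(\log^4(V))$ budget, and dividing the resulting cost bound by $d_k\geq V^{1+\epsilon}$ for the utility. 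The only cosmetic difference is bookkeeping of where the mixed-distribution slots sit (you charge the last $O(w)$ slots of each interval, which is the accurate reading under perfect prediction; the paper conservatively charges $d$ slots around $t_k$), and this affects nothing since both transients are absorbed into the same budget.
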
 
\begin{proof}
%Omitted due to space limitation. Please see \cite{plc-techreport-17} for proof details. 
See Appendix D. 
\end{proof}

The intuition  is that since prediction is perfect, i.e., $\bv{\pi}_a(t)=\bv{\pi}_k$ during $[t_k+d, t_{k+1}-w]$. Therefore, a better performance can be achieved. 
The key challenge in  this case is that \plc{} does not perform any packet dropping. Thus, queues can build up and one needs to show that the queues will be concentrating around $\theta\cdot\bv{1}$ even when the distribution changes.

\subsection{Convergence time} 
We now consider the algorithm  convergence time, which is an important evaluation metric and measures how long it takes for an algorithm to reach its steady-state. 
While recent works \cite{huang-rlc-15}, \cite{huang-learning-sig-14}, \cite{neely-convergence-16}, and \cite{heavy-ball-sig16} also investigate algorithm convergence time, they do not consider utilizing prediction in learning and do not study the impact of prediction error. 

To  formally state our results, we adopt the following definition of convergence time from \cite{huang-learning-sig-14}.

\begin{definition} 
Let $\zeta>0$ be a given constant and let $\bv{\pi}$ be a system distribution.  The $\zeta$-convergence time of a control algorithm, denoted by $T_{\zeta}$, is the time it takes for the effective queue vector $\bv{Q}(t)$ to get to within $\zeta$ distance of $\bv{\gamma}^*_{\bv{\pi}}$, i.e.,\footnote{$T_{\zeta}$ is essentially the \emph{hitting time} of the process $\bv{Q}(t)$ to the area $\{||\bv{Q}(t)-\bv{\gamma}^*_{\bv{\pi}}||\leq\zeta\}$ \cite{aldousmarkovchainbook}. % of the resulting queueing process to a neighborhood of the optimal Lagrange multiplier $\bv{\gamma}^*_{\bv{\pi}}$ under the control algorithm.
}
\begin{eqnarray}
T_{\zeta}\triangleq\inf\{t\,|\, ||\bv{Q}(t)-\bv{\gamma}^*_{\bv{\pi}}||\leq\zeta\}.\,\,\,\Diamond\label{eq:con-time}
\end{eqnarray}
\end{definition}
With this definition, we have the following theorem. Recall that $w\leq d=\Theta(\log(V)^2)$. 
\begin{theorem}\label{theorem:plc-conv-stationary} 
%Suppose   the system is polyhedral under each $\bv{\pi}_k$ with $\rho=\Theta(1)$ and 
Assuming all conditions in Theorem \ref{theorem:plc-nonstationary}, except that $\bv{\pi}(t)=\bv{\pi}_k$ for all $t\geq t_k$. If $e_w=0$, 
under \plc{}, % with the setting in Theorem \ref{theorem:plc-nonstationary}, 
\begin{eqnarray}
\hspace{-.2in}\expect{T_{G}} &=& O(\log^4(V)). \label{eq:convergence-time0}
\end{eqnarray}
Else suppose $e_w>0$. Under the  conditions of Theorem \ref{theorem:plc-nonstationary},  with probability $1-O(\frac{1}{VT_l} +\frac{D_k}{V^2T_l})$, 
\begin{eqnarray}
\hspace{-.2in}\expect{T_G}&=&O(\theta+T_l+ D_k+w)  \label{eq:convergence-time1} \\ 
\hspace{-.2in}\expect{T_{G_1}} &=& O(d).  \label{eq:convergence-time2}
\end{eqnarray} 
Here $G=\Theta(1)$ and  $G_1=\Theta(D_k+2\log(V)^2(1+Ve_w))$, where $D_k$ is defined in Theorem \ref{theorem:plc-nonstationary} as the most recent reset point before $t_k$. In particular, if $d_{k-1}=\Theta(T_lV^{a_1})$ for some $a_1=\Theta(1)>0$ and $\theta=O(\log(V)^2)$, then with probability $1-O(V^{-2})$,  $D_k = O(d)$, and $\expect{T_{G_1}} = O(\log^2(V))$. 
$\Diamond$ 
\end{theorem}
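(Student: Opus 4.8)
The plan is to decompose $T_\zeta$ into a \emph{learning phase}, in which the empirical multiplier $\bv{\gamma}^*(t)$ converges to the true multiplier $\bv{\gamma}^*_{\bv{\pi}_k}$, and a \emph{drift phase}, in which the Backpressure controller pulls the effective queue $\bv{Q}(t)=\bv{q}(t)+(\bv{\gamma}^*(t)-\theta)^+$ toward $\bv{\gamma}^*_{\bv{\pi}_k}$. Since $\bv{\pi}(t)=\bv{\pi}_k$ for all $t\geq t_k$, the environment after $t_k$ is stationary and the only feature inherited from the previous interval is a leftover backlog measured by $D_k$. For the learning phase I would show that once $\mathcal{W}_m(t)$ accumulates a full batch of post-$t_k$ samples --- within $O(T_l)$ slots --- the estimate $\bv{\pi}_a(t)$ concentrates around $\bv{\pi}_k$ with deviation $O(\max(V^{-c/2},e_w))$, combining the concentration and no-spurious-reset guarantees of Lemma \ref{lemma:detection}(b) with the prediction-error floor $e_w$. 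By Assumption \ref{assumption:unique} and the polyhedral property, the map $\bv{\pi}\mapsto\bv{\gamma}^*_{\bv{\pi}}$ is Lipschitz near $\bv{\pi}_k$, so $\|\bv{\gamma}^*(t)-\bv{\gamma}^*_{\bv{\pi}_k}\|=O(\min(V^{1-c/2},Ve_w))$. Union-bounding the concentration-failure and false-alarm probabilities of Lemma \ref{lemma:detection} over the $O(T_l+D_k)$ relevant slots then produces the stated $1-O(\tfrac{1}{VT_l}+\tfrac{D_k}{V^2T_l})$ guarantee.

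Next I would analyze the drift phase. Once the multiplier is accurate, $(\bv{\gamma}^*(t)-\theta)^+\approx \bv{\gamma}^*_{\bv{\pi}_k}-\theta\cdot\bv{1}$, so $\bv{Q}(t)-\bv{\gamma}^*_{\bv{\pi}_k}\approx \bv{q}(t)-\theta\cdot\bv{1}$ and convergence of $\bv{Q}(t)$ reduces to concentration of $\bv{q}(t)$ near $\theta\cdot\bv{1}$. Two time scales appear. On the fast scale, within $O(d)$ slots the detection window $\mathcal{W}_d(t)$ (of size $d\geq w+1$) has flushed all pre-$t_k$ samples, so the prediction-based estimate $\bv{\pi}_a(t)=\tfrac{1}{w+1}\sum_{k}\hat{\bv{\pi}}(t+k)$ is already $O(e_w)$-accurate and the multiplier is within $O(Ve_w)$ of the target; since the physical backlog cannot exceed $D_k+O(d\delta_{\max})$ by then, $\bv{Q}(t)$ lies within $G_1=\Theta(D_k+2\log^2(V)(1+Ve_w))$ of $\bv{\gamma}^*_{\bv{\pi}_k}$, giving $\expect{T_{G_1}}=O(d)$. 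On the slow scale, to reach the constant band $G=\Theta(1)$ the physical queue must actually drain toward $\theta\cdot\bv{1}$: taking $\Phi(t)=\|\bv{Q}(t)-\bv{\gamma}^*_{\bv{\pi}_k}\|$, the Backpressure update with the refined ($W_m\geq T_l$) multiplier together with the polyhedral inequality (\ref{eq:polyhedral}) gives a restoring drift $\expect{\Phi(t+1)-\Phi(t)\,|\,\cdot}\leq-\Theta(\rho)$ whenever $\Phi(t)\geq G$, modulo an $O(\min(V^{1-c/2},Ve_w)\log^2 V)$ bias from the residual multiplier error. A first-passage bound (as in \cite{huang-learning-sig-14}) then converts the initial gap $\Phi(0)=O(\theta+D_k)$ into $\expect{T_G}=O(\theta+D_k+T_l+w)$, with $+T_l$ covering the refinement phase and $+w$ the prediction-window offset.

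For $e_w=0$ we have $T_l=\infty$, so $\bv{\pi}_a(t)$ equals the exact average distribution $\bv{\pi}_k$ as soon as the prediction window lies inside $[t_k,\infty)$, the multiplier is exact with no detection risk, and $\theta=2\log^2(V)$. Here I would run the drift argument with a \emph{quadratic} Lyapunov function $\Phi^2(t)$, using the $O(\log^4(V))$ concentration of $\bv{q}(t)$ about $\theta\cdot\bv{1}$ that underlies Theorem \ref{theorem:perfect-plc}(b), so that the hitting time to the constant band scales like $\Phi^2(0)/(\rho G)=O((\log^2 V)^2)=O(\log^4(V))$, giving (\ref{eq:convergence-time0}). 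Finally, for $d_{k-1}=\Theta(T_lV^{a_1})$ I would first argue that a reset point is generated inside the previous interval with probability $1-O(V^{-2})$ --- the long interval guarantees a cycle of size $T_l$, which zeroes $\bv{q}$ and leaves only $o(d)$ stale samples --- forcing $D_k=O(d)$; substituting $D_k=O(d)$ and the added hypothesis $\theta=O(\log^2 V)$ into the bounds above and re-tracing the first-passage estimate then sharpens $\expect{T_{G_1}}$ to $O(\log^2(V))$.

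The main obstacle is sustaining the negative drift $-\Theta(\rho)$ throughout the drift phase while the multiplier is only \emph{approximately} correct and $\mathcal{W}_m(t)$ is still growing (and could in principle trigger a spurious reset). I must verify that the restoring drift survives the additive perturbations from (i) the $O(\min(V^{1-c/2},Ve_w))$ multiplier error, (ii) the $\theta$-offset inside $(\bv{\gamma}^*(t)-\theta)^+$, and (iii) the leftover backlog $D_k$, and then arrange that the queue-side drift bound and the estimation-side concentration and no-false-alarm bounds (Lemma \ref{lemma:detection}) hold on the \emph{same} high-probability event over a common $O(T_l+D_k)$ horizon. Coupling these two analyses so that the entire argument closes with probability $1-O(\tfrac{1}{VT_l}+\tfrac{D_k}{V^2T_l})$ is the delicate part.
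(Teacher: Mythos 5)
Your overall architecture matches the paper's: a learning phase until $W_m(t)=T_l$, followed by a first-passage/drift argument from the resulting gap $\Theta((1+V/\sqrt{T_l})\log^2(V)+T_l+D_k+d)$; fast convergence to the $G_1$-ball within $O(d)$ slots using the prediction-based multiplier (error $O(Ve_w)$ by Lemma \ref{lemma:estimate-error}) together with the backlog bound $\bv{q}(t')=O(D_k+d)$; and the reduction to $D_k=O(d)$ when $d_{k-1}=\Theta(T_lV^{a_1})$. The paper invokes the hitting-time lemma (Lemma 5 of \cite{huang-learning-sig-14}) exactly where you invoke your first-passage bound. However, two of your steps do not go through as described. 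First, the probability $1-O(\frac{1}{VT_l}+\frac{D_k}{V^2T_l})$ cannot come from union-bounding the detection/concentration failures of Lemma \ref{lemma:detection}: those are $O(V^{-\log V})$ and $O(MT_l^{-2\log T_l})$ per slot, so a union over $O(T_l+D_k)$ slots is superpolynomially small and has the wrong shape. In the paper the polynomial terms arise from Markov's inequality applied to the hitting time itself against the cycle horizon $V^{2+c_1}=V^2T_l$, i.e., $\prob{T_G\geq V^2T_l}\leq O(V^{-1-c_1}+D_kV^{-2-c_1})$. This step is essential because the learned multiplier persists only until a reset ends the cycle; the dominant risk is that convergence fails to occur within the cycle, not that detection fails, and your sketch never controls this event.

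Second, in the $e_w=0$ case your accounting is off in two compensating ways. The initial gap is not $O(\log^2 V)$: the perfect-prediction queue bound (\ref{eq:queue-bdd-perfect-predict}) only gives $\expect{\sum_j q_j(t_k)}=O(\log^4 V)$, hence $\|\bv{Q}(t_k)-\bv{\gamma}^*\|$ is only controlled at $O(\log^4(V))$ in expectation. And under the constant negative drift furnished by the polyhedral property, the hitting time is \emph{linear} in the initial gap, not quadratic; a bound of the form $\Phi^2(0)/(\rho G)$ with a constant band $G=\Theta(1)$ does not follow from a valid negative drift condition on $\Phi^2$ near the band, and is in any case unnecessary. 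The paper simply applies $\expect{T_G\,|\,\bv{q}(t_k)}=\Theta(\|\bv{q}(t_k)-\bv{\theta}\|)$ and takes expectation over $\bv{q}(t_k)$, yielding $O(\log^4 V)$ directly. Your two misestimates cancel numerically to produce the right exponent, but neither step is justified as written and both should be replaced by the linear argument with the correct initial condition.
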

\begin{proof}
%Omitted due to space limitation. Please see \cite{plc-techreport-17} for proof details. 
See Appendix E. 
\end{proof}
%==talk about non-stationary case==

The assumption $\bv{\pi}(t)=\bv{\pi}_k$ for all $t\geq t_k$ is made to avoid the need for specifying the length of the intervals. %, and note that $w+1\leq d=\Theta(\log^2(V))$. 
% 
%We can see that in general, so long as $d_k\geq V$, \plc{} quickly converges and (\ref{eq:convergence-time}) holds. 
% 
It is interesting to compare (\ref{eq:convergence-time0}), (\ref{eq:convergence-time1}) and (\ref{eq:convergence-time2}) with the convergence results in  \cite{huang-learning-sig-14} and \cite{huang-rlc-15} without prediction, where it was shown that the convergence time is $O(V^{1-c/2}\log(V)^2+V^c)$, with a minimum of $O(V^{2/3})$. 
Here although we may still need $O(V^{2/3})$ time for getting into an $G$-neighborhood (depending on $e_w$), getting to the $G_1$-neighborhood can take only an $O(\log^2(V))$ time, which is much faster compared to previous results, e.g., when $e_w=o(V^{-2})$ and $D_k=O(w)$, we have $G_1 = O(\log^2(V))$. 
This confirms our intuition that \emph{prediction accelerates algorithm convergence} and demonstrates the power of  (even imperfect) prediction. 
% 
%From Theorem \ref{theorem:plc-nonstationary}, we see that $d_w=\Theta( V^{1-c/2}\log(V)\land  \frac{V\log(V)}{\sqrt{V^2\land (w-d)^+}} )$. Hence, it is possible to significantly reduce the convergence time with increasing $w$!  This rigorous proves our intuition that \emph{prediction accelerates algorithm convergence} and quantifies the ``square-root plus linear''  order to which prediction helps, i.e., $1/\sqrt{V^2\land (w-d)^+}$ and $ (V^c-w)^+$. 

%\huang{stop here}
\section{Simulation}\label{section:sim} 
In this section, we present simulation results of \plc{} in a two-queue system shown in Fig. \ref{fig:2q}.  
Though being simple, the system  models various settings, e.g., a two-user downlink transmission problem in a mobile network,  a CPU scheduling problem with two applications, or  an inventory control system where two types of orders are being processed. 
% to demonstrate both the near-optimal performance and the the superior convergence rate. We consider a $2$-queue system depicted in Fig. \ref{fig:2q}. 
\begin{figure}[ht]
\begin{center}
%\vspace{-.15in}
\includegraphics[width=1.8in, height=0.8in]{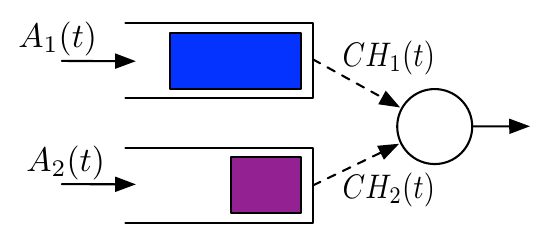}
\vspace{-.1in}
\caption{A single-server two-queue system. Each queue receives random arrivals. The server can only serve one queue at a time.} 
\label{fig:2q}
\end{center}
\vspace{-.2in}
\end{figure}

$A_j(t)$ denotes the number of arriving packets to queue $j$ at time $t$. We assume $A_j(t)$ is i.i.d.  being $1$ or $0$ with probabilities $p_j$ and $1-p_j$, and use $p_1=0.3$ and $p_2=0.6$. Thus, $\lambda_1=0.3$ and $\lambda_2=0.6$. 
Each queue has a time-varying channel condition. We denote $CH_j(t)$  the channel condition of queue $j$ at time $t$. We assume that $CH_j(t)\in \mathcal{CH}_j$ with $\mathcal{CH}_1= \{0, 1\}$ and $\mathcal{CH}_2=\{1, 2\}$. The channel distributions are assumed to be uniform. 
At each time, the server determines the power allocation to each queue. We use $P_j(t)$ to denote  the power allocated to queue $j$ at time $t$. 
Then, the instantaneous service rate $q_j(t)$ gets is given by:
\begin{eqnarray}
\mu_j(t) = \log(1+CH_j(t) P_j(t)). 
\end{eqnarray} 
%We consider two different cases. 
We assume that $P_j(t)\in\mathcal{P}=\{ 0, 1, 2\}$ for $j=1, 2$, and at each time only one queue can be served. 
The objective is to stabilize the system with minimum average power. 
It can be verified that Assumptions \ref{assumption:bdd-LM} and \ref{assumption:unique} both  hold in this example. 

We compare \plc{} with \bp{} in two cases. 
 The first case is a stationary system where the arrival distributions remain constant. 
 %where the channel distribution is uniform throughout.  
 The second case is a non-stationary case, where we change the arrival distributions during the simulation.  
 In both cases we simulate the system for $T=5\times10^4$ slots with $V\in\{20, 50, 100, 150, 200, 300\}$. We set $w+1=5$ and generate prediction error  by adding uniform random noise to distributions with max value $e(k)$ (specified below). 
We also use $\epsilon_d=0.1$, $\delta=0.005$ and  $d=2\ln(4/\delta)/\epsilon^2+w+1$. We also simplify the choice of $\theta$ and  set it to $\theta=\log(V)^2$. % and use $w=d$. 
 
We first examine the long-term performance. Fig. \ref{fig:utility-delay-stationary} shows the utility-delay performance of \plc{} compared to \bp{} in the stationary setting. There are two \plc{} we simulated, one is with $e_w=0$ (\plc) and the other is with $e_w=0.04$ (\plc-e). From the plot, we see that both \plc{}s achieve a similar utility as \bp{}, but guarantee a much smaller delay. The reason \plc-e has a better performance is due to packet dropping. We observe around an average packet dropping rate of $0.06$. As noted before, the delay reduction of \plc{} cannot be achieved by simply dropping this amount of packets. 
 \begin{figure}[ht]
\begin{center}
\vspace{-.1in}
\includegraphics[width=3.3in, height=1.6in]{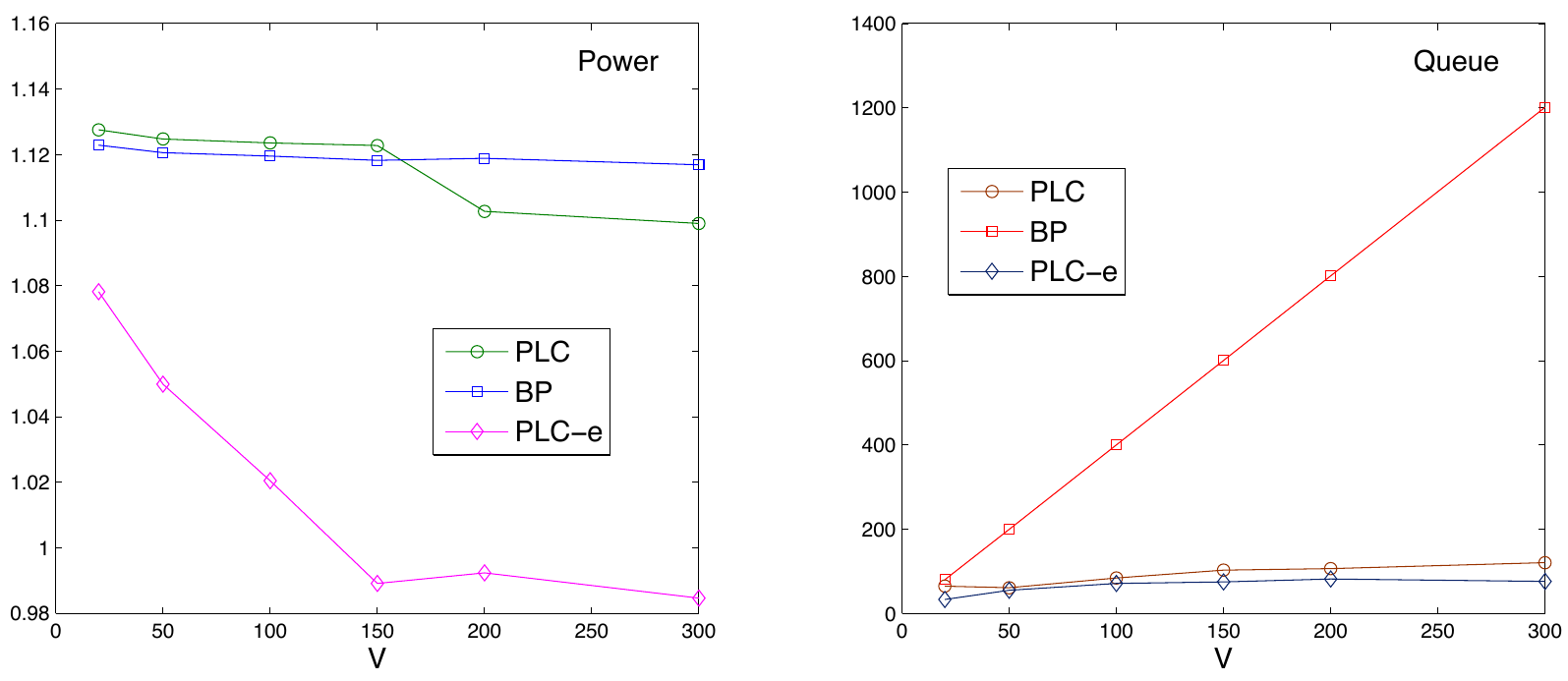}
\vspace{-.15in}
\caption{Utility and delay performance comparison between \plc{} and \bp. }  
%  with the uniform channel  distribution.} 
\label{fig:utility-delay-stationary}
\end{center}
\vspace{-.1in}
\end{figure}

% 
%Fig. \ref{fig:utility-delay-stationary2} shows the performance of \plc{} for a different stationary system, where $\prob{CH_1(t)=0}=0.65$ and $\prob{CH_1(t)=0}=0.35$, and  $\prob{CH_2(t)=1}=0.3$ and $\prob{CH_2(t)=2}=0.7$, and $\lambda_1=0.3$ and $\lambda_2=0.9$. We similarly see that \plc{} outperforms \bp{} in both utility and delay.  
%\begin{figure}[ht]
%\begin{center}
%%\vspace{-.05in} 
%\includegraphics[width=3.3in, height=1.8in]{utility_delay_nonuniform_cp}
%\vspace{-.2in}
%\caption{Performance comparison between \plc{} and \bp{} with  a non-uniform channel distribution.} 
%\label{fig:utility-delay-stationary2} 
%\end{center}
%\vspace{-.1in}
%\end{figure}

Next, we take a look at the detection and convergence performance of \plc{}. Fig. \ref{fig:utility-delay-nonstationary} shows the performance of \plc{} with perfect prediction ($e_w=0$), \plc{} with  prediction error ($e_w=0.04$) and \bp{} when the underlying distribution changes. Specifically, we run the simulation for $T=5000$ slots and start with the arrival rates of $p_1=0.2$ and $p_2=0.4$. Then, we change them to  $p_1=0.3$ and $p_2=0.6$ at time $T/2$. 
% 
% 
%The rectangle dash line in the top plot denotes the intervals generated by \ade{}. We see that  \ade{} divides the period of $2\times10^4$ slots into only three intervals and quickly detects the distribution change. Moreover, it can be easily seen that \plc{} converges much faster both at the beginning ($\sim 50$ vs $2200$) and after the distribution changes ($\sim400$ vs $800$). This demonstrates the effectiveness of \plc{}. 
%This demonstrates the effectivelyness
\begin{figure}[ht]
\begin{center}
\vspace{-.05in}
\includegraphics[width=3.3in, height=1.7in]{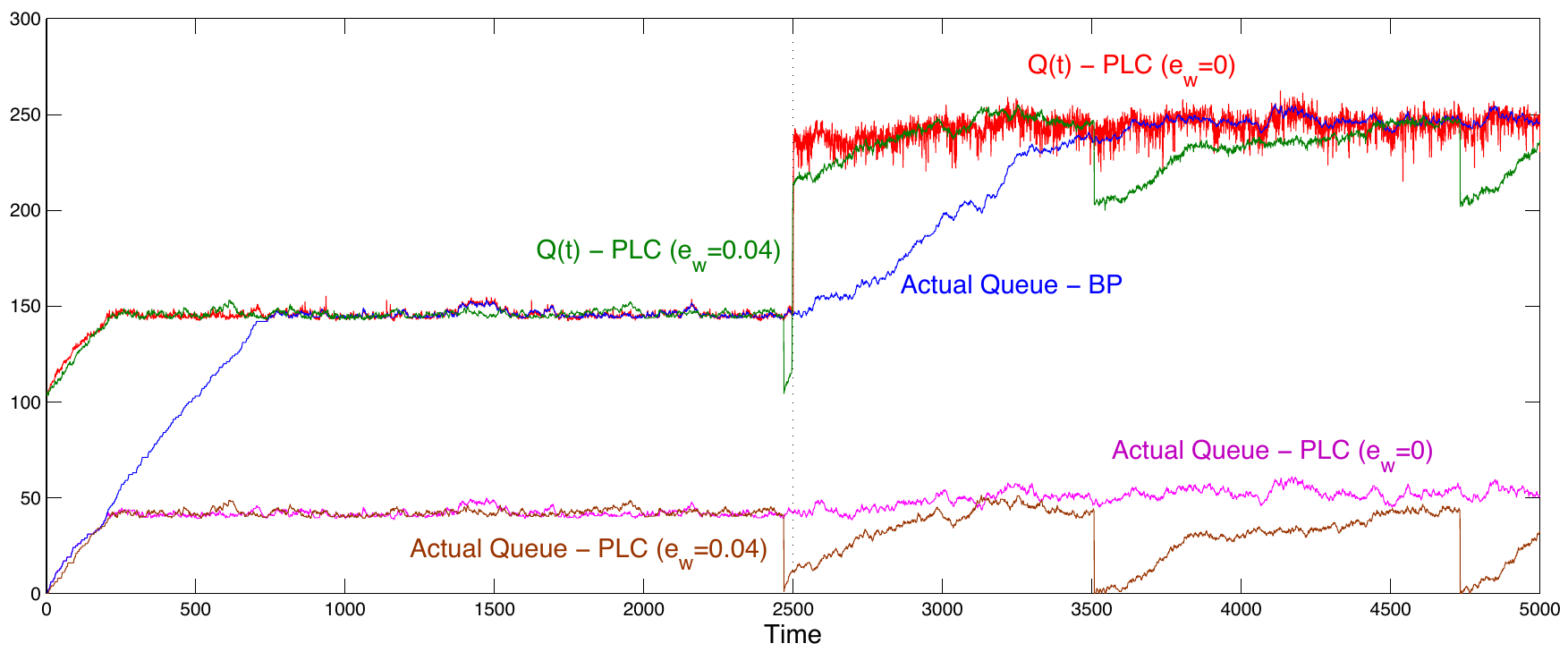}
\vspace{-.15in}
\caption{Convergence comparison between \plc{} and \bp{} for queue $1$ under $V=100$.  \plc{} ($e_w=0$) is the perfect case and \plc{} ($e_w=0.04$) contains prediction error. Both versions converge much faster compared to \bp.} 
\label{fig:utility-delay-nonstationary}
\end{center}
\vspace{-.2in}
\end{figure}

We can see from the green and red curves that \plc{} quickly adapts to the change and modifies the Lagrange multiplier accordingly. By doing so, the actual queues under \plc{}  (the purple and the brown curves) remain largely unaffected. For comparison, we see that \bp{} takes a longer time to adapt to the new distribution and results in a larger queue size. 
We also see that during the $5000$ slots, \plc{} ($e_w=0.04$) drops packets $3$ times (zero for the first half), validating the results in Lemma \ref{lemma:detection} and Theorem \ref{theorem:plc-stationary}. Moreover, after the distribution change, \plc{} ($e_w=0.04$)  quickly adapts to the new equilibrium, despite having imperfect prediction. 
The fast convergence result also validates our theorem about short term utility performance under \plc{}. Indeed, if we look at slots during time $200-500$, and slots between $2500-3500$, we see that when  \bp{} is learning the target backlog, \plc{} already operates near the optimal mode. This  shows the benefits of prediction and learning in stochastic network control.  

%\subsection{Prediction-based and passive-learning based} 
%Note that the work \cite{huang-rlc-15} adopts a similar idea where the statistics are learned and incorporated into system control. The main difference between that in \cite{huang-rlc-15}  and \plc{} is that prediction is used. Indeed, since the algorithm in \cite{huang-rlc-15} only uses past information to learn the multiplier, it is quite possible that the distribution changes soon after that, in which case the performance during the frame will be unsatisfactory because the learned multiplier is inaccurate. \plc{} successfully resolves this problem. 
%
%The following result shows what happens if 
%
%It can be seen that it is critical to have $w\geq d$. This ensures that === 
%
%=define error metrics, one should be long term distribution errors, the second one should be the realization error, in terms of maybe variance?? =
%
%=compare a frame performance under \plc{} and \rlc. =
 
\section{Conclusion}\label{section:conclusion}
We investigate the problem of stochastic network optimization in the presence of imperfect state prediction and non-stationarity. % of system dynamics. 
Based on a novel distribution-accuracy curve prediction model, we develop the  \emph{predictive learning-aided control} (\plc) algorithm. \plc{} is an online algorithm that %requires \emph{zero} a-prior system statistical information, and 
contains three main functionalities, sequential distribution estimation and change detection, dual learning, and online  queue-based control. 
We show that \plc{} simultaneously achieves good long-term performance, short-term queue size reduction, accurate change detection, and fast algorithm convergence. 
Our results demonstrate that state prediction  can help improve performance  and quantify the benefits of prediction in  four important    metrics, i.e., utility (efficiency), delay (quality-of-service), detection (robustness), and convergence (adaptability). They provide new insight for joint prediction, learning and optimization in stochastic networks.

  %\vspace{-.2in}
%\input{conclusion-v1}
\bibliographystyle{unsrt}
\bibliography{mybib}

\section*{Appendix A - Proof of Lemma \ref{lemma:detection}}\label{section:appendix-A}
\textbf{(Proof of Lemma \ref{lemma:detection})} 
We prove the performance of \ade($T_l, d, \epsilon$) with an argument inspired by  \cite{adwin07}.  We  make use of the following concentration result. 
\begin{theorem}\label{theorem:concentration} \cite{chung_concentration} 
Let $X_1$, ..., $X_n$ be independent random variables with $\prob{X_i=1}=p_i$, and $\prob{X_i=0}=1-p_i$. Consider $X=\sum_{i=1}^nX_i$ with expectation $\expect{X}=\sum_{i=1}^np_i$. Then, we have:
\begin{eqnarray}
\prob{X\leq \expect{X}-m} &\leq& e^{\frac{-m^2}{2\expectm{X}}}, \label{eq:low-tail}\\
\prob{X\geq \expect{X}+m} &\leq& e^{\frac{-m^2}{2(\expectm{X}  +m/3 )}}. \quad\Diamond\label{eq:high-tail}
\end{eqnarray}
\end{theorem}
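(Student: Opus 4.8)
The plan is to prove both tail bounds by the standard Chernoff exponential-moment method and then reduce each to an elementary scalar inequality that supplies the exact constants in the statement. Throughout write $\mu\triangleq\expectm{X}=\sum_i p_i$. For the upper bound (\ref{eq:high-tail}) I would fix a parameter $s>0$ and apply Markov's inequality to $e^{sX}$, giving $\prob{X\geq\mu+m}\leq e^{-s(\mu+m)}\expectm{e^{sX}}$. By independence the moment generating function factorizes, and the elementary bound $1+x\leq e^{x}$ gives $\expectm{e^{sX}}=\prod_i(1+p_i(e^{s}-1))\leq e^{\mu(e^{s}-1)}$. Hence $\prob{X\geq\mu+m}\leq\exp\{-s(\mu+m)+\mu(e^{s}-1)\}$ for every $s>0$, and minimizing the exponent by taking $s=\ln(1+\delta)$ with $\delta\triangleq m/\mu$ yields $\prob{X\geq\mu+m}\leq\exp\{-\mu[(1+\delta)\ln(1+\delta)-\delta]\}$.

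For the lower bound (\ref{eq:low-tail}) I would run the same argument on $e^{-sX}$ with $s>0$: Markov gives $\prob{X\leq\mu-m}\leq e^{s(\mu-m)}\expectm{e^{-sX}}$, and the identical factorization together with $1+x\leq e^x$ gives $\expectm{e^{-sX}}\leq e^{-\mu(1-e^{-s})}$. Optimizing with $s=-\ln(1-\delta)$, which is positive since $0\leq\delta<1$, yields $\prob{X\leq\mu-m}\leq\exp\{-\mu[(1-\delta)\ln(1-\delta)+\delta]\}$.

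The remaining step, which is also where the characteristic constants emerge, is to replace these ``relative-entropy'' exponents by the Gaussian/Bernstein forms in the statement. For the lower tail this reduces to the scalar inequality $(1-\delta)\ln(1-\delta)+\delta\geq\delta^{2}/2$ on $[0,1)$: both sides vanish at $\delta=0$, and a derivative comparison (equivalently the series $-\ln(1-\delta)=\delta+\delta^{2}/2+\cdots$) shows the left side dominates; substituting $\delta=m/\mu$ turns $\mu\delta^{2}/2$ into $m^{2}/(2\mu)$ and gives (\ref{eq:low-tail}). For the upper tail I would establish $(1+\delta)\ln(1+\delta)-\delta\geq\frac{\delta^{2}}{2+2\delta/3}$ for all $\delta\geq0$; multiplying by $\mu$ and using $m=\mu\delta$ turns the right side into exactly $\frac{m^{2}}{2(\mu+m/3)}$, giving (\ref{eq:high-tail}).

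I expect the upper-tail scalar inequality to be the main obstacle, since it is precisely what produces the nonstandard $m/3$ correction in the denominator and is less immediate than its lower-tail counterpart. I would handle it by clearing the denominator, i.e.\ proving $\psi(\delta)\triangleq(2+\tfrac{2}{3}\delta)[(1+\delta)\ln(1+\delta)-\delta]-\delta^{2}\geq0$ on $[0,\infty)$. A direct computation gives $\psi(0)=\psi'(0)=0$ together with $\psi''(\delta)=\tfrac{4}{3}[\ln(1+\delta)-\delta/(1+\delta)]\geq0$, the last bracket being nonnegative because it vanishes at $0$ with derivative $\delta/(1+\delta)^{2}\geq0$. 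Thus $\psi$ is convex with a double zero at the origin and hence nonnegative on $[0,\infty)$, which completes the upper bound and the proof.
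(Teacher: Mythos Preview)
Your proof is correct. The paper does not supply its own argument for this statement: the theorem is quoted verbatim from the cited reference \cite{chung_concentration} and used as a black box, so there is no in-paper proof to compare against. Your derivation via the Chernoff moment-generating-function method, followed by the two scalar inequalities that convert the relative-entropy exponents into the stated Gaussian/Bernstein forms, is the standard route and matches what one finds in the source; in particular your verification of $\psi''(\delta)=\tfrac{4}{3}\bigl[\ln(1+\delta)-\delta/(1+\delta)\bigr]\geq 0$ cleanly handles the only nonroutine step.
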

%In the proof, we denote $W_{d}(t)=|\mathcal{W}_{d}(t)|$ and $W_{m}(t)=|\mathcal{W}_{m}(t)|$. %If $W_{m}(t)=0$, we simply assume $\hat{\pi}^m_i(t)=0$.  
\begin{proof} (Lemma \ref{lemma:detection})
(\textbf{Part (a)}) In this case, it suffices to check condition (i) in \ade. Define 
\begin{eqnarray*}
 \tilde{\pi}_i^d(t) \triangleq\frac{1}{d} \bigg(\sum_{\tau=(t+w-d)_+}^{t-1}1_{[S(\tau) = s_i]} + \sum_{\tau\in \mathcal{W}_w(t)} \pi_i(\tau) \bigg),  
\end{eqnarray*}
i.e., $\tilde{\pi}_i^d(t)$ is defined with the true distributions in $\mathcal{W}_d(t)$. Denote $\epsilon_1 = (w+1)e_w/d$, we see then $\| \tilde{\bv{\pi}}^d(t) - \hat{\bv{\pi}}^d(t)\|\leq \epsilon_1$.  
Thus, for any $\epsilon> 0$, we have: 
%We first bound the probability that (\ref{eq:split}) holds. We have: 
%Define $\epsilon_i=\epsil$. Then, 
\begin{eqnarray}
&&\prob{\|\hat{\bv{\pi}}^d(t) - \hat{\bv{\pi}}^m(t)\|_{TV} \leq \epsilon}\nonumber\\
  &\leq&\prob{\|\tilde{\bv{\pi}}^d(t) - \hat{\bv{\pi}}^m(t)\|_{TV} \leq \epsilon+\epsilon_1} \nonumber\\ 
 &\leq&  \prob{ |\tilde{\pi}^d_i(t) - \hat{\pi}^m_i(t)| \leq \epsilon+\epsilon_1}.  \label{eq:fooo-1}
\end{eqnarray} 
Choose $\epsilon=\frac{1}{2}\max|\pi_{1i} - \pi_{2i}| - 2\epsilon_1>0$ and let $\epsilon_0 = \epsilon+\epsilon_1$. 
% and denote $\epsilon - \epsilon_1>0$ and $\epsilon_2=\max|\pi_{1i} - \pi_{2i}|/2$. 
% \max|\pi_{1i} - \pi_{2i}| - (w+1)e_w/d
%We now bound the second term.  
Fix $\alpha\in(0, 1)$ and consider   $i\in\arg\max_i|\pi_{1i} - \pi_{2i}|$. We have: 
%From (\ref{eq:fooo-1}), we have: 
%Note that for each $i$, we have for $\alpha\in(0, 1)$ that: 
\begin{eqnarray} 
\hspace{-.2in}&& \prob{|\tilde{\pi}^d_i(t) - \hat{\pi}^m_i(t)| \leq   \epsilon_0}\nonumber\\
\hspace{-.2in}&&\qquad \leq \prob{ \{|\tilde{\pi}^d_i(t) - \pi_{1i}| \geq \alpha   \epsilon_0\}\nonumber\\
\hspace{-.2in}&&\qquad\qquad\qquad\cup\{|\hat{\pi}^m_i(t) - \pi_{2i}| \geq (1-\alpha)  \epsilon_0\}   }\nonumber\\
\hspace{-.2in}&&\qquad\leq \prob{ |\tilde{\pi}^d_i(t) - \pi_{1i}| \geq \alpha \epsilon_0} \nonumber\\
\hspace{-.2in}&&\qquad\qquad\qquad+ \prob{ |\hat{\pi}^m_i(t) - \pi_{2i}| \geq (1-\alpha)  \epsilon_0}. \label{eq:prob-bdd}
\end{eqnarray}
Here the first inequality follows because if we have both $\{|\tilde{\pi}^d_i(t) - \pi_{1i}| <\alpha \epsilon_0\}$ and $\{|\hat{\pi}^m_i(t) - \pi_{2i}| < (1-\alpha)  \epsilon_0$, and $|\tilde{\pi}^d_i(t) - \hat{\pi}^m_i(t)| \leq   \epsilon_0$, we must have: 
\begin{eqnarray*}
\hspace{-.3in}&&\,\,|\pi_{1i}-\pi_{2i}|\leq |\tilde{\pi}^d_i(t) - \pi_{1i}|  +  |\hat{\pi}^m_i(t) - \pi_{2i}| + |\tilde{\pi}^d_i(t) - \hat{\pi}^m_i(t)| \\
\hspace{-.3in}&&\qquad\qquad\quad = 2\epsilon_0 <    |\pi_{1i} - \pi_{2i}|, 
\end{eqnarray*}
which contradicts the fact that $i$ achieves $\max_i|\pi_{1i} - \pi_{2i}|$. 
Using (\ref{eq:prob-bdd}) %, the fact that both $\mathcal{W}_m(t)$ and $\mathcal{W}_d(t)$ contain i.i.d. samples from two distributions,  
and Hoeffding inequality \cite{hoeffding1963}, we first have: 
\begin{eqnarray}
\hspace{-.4in}&&\prob{|\hat{\pi}^m_i(t) - \pi_{2i}| \geq (1-\alpha)\epsilon_0}\nonumber\\
\hspace{-.4in}&& \qquad\qquad \leq   2\exp(-2( (1-\alpha) \epsilon_0)^2W_{m}(t)). \label{eq:prob-bdd-derive1}
%\\\hspace{-.2in}&&\quad \nonumber
\end{eqnarray}
For the first term in (\ref{eq:prob-bdd}), we have: 
\begin{eqnarray}
\hspace{-.4in}&&\prob{|\tilde{\pi}^d_i(t) - \pi_{1i}| \geq \alpha\epsilon_0} \nonumber\\
\hspace{-.4in}&& \qquad\qquad   \leq   2\exp(-2(\alpha \epsilon_0)^2(W_{d}(t)-w-1)). \label{eq:prob-bdd-derive2}
\end{eqnarray}
 Equating the above two probabilities and setting the sum equal to $\delta$, we have $\alpha = \frac{\sqrt{W_{m}(t)/(W_{d}(t)-w-1)}}{1+\sqrt{W_{m}(t)/(W_{d}(t)-w-1)}}$, and 
\begin{eqnarray}
\epsilon_0 = \sqrt{\ln\frac{4}{\delta}} \cdot\frac{1 + \sqrt{ (W_{d}(t)-w-1)/W_{m}(t)}}{\sqrt{ 2(W_{d}(t)-w-1)}}. 
%\alpha \epsilon_i &=& \sqrt{\ln\frac{4}{\delta}} \cdot\frac{1}{\sqrt{ 2W_{d}(t) }}. 
\end{eqnarray}
%Note here we can also change $\epsilon_2$ to a smaller constant $\epsilon_0/2$. In this case, if we want to 
In order to detect the different distributions, we can choose $\epsilon_d<\epsilon_0$, which on the other hand requires that: 
\begin{eqnarray}
\hspace{-.4in}&&\epsilon_d\stackrel{(*)}{\leq} \sqrt{\ln\frac{4}{\delta}} \cdot \sqrt{ \frac{1}{2(d-w-1)} } < \epsilon_0\nonumber\\
\hspace{-.4in}&&\qquad\qquad\qquad\Rightarrow\, d> \ln\frac{4}{\delta}\cdot\frac{1}{2\epsilon_d^2} +w+1. 
\end{eqnarray}
Here (*) follows because $W_{d}(t)=d\leq W_{m}(t)$. 
This shows that whenever $W_{d}(t)=d\leq W_{m}(t)$ and the windows are loaded with non-coherent samples, error will be detected with probability $1-\delta$. 

%This thus proves Part (a). 
%Now to see the detection probability, note that ===

%Using the fact that $W_{d}(t)\leq W_{m}(t)$ and $W_{d}(t)=d$ once tested, we only need to ensure: 
%$\epsilon_i\leq \sqrt{\ln\frac{4}{\delta}} \cdot \sqrt{ \frac{2}{d} }$. 
% 

(\textbf{Part (b)}) Note that for any time $t$,  the distribution will be declared changed if $||\hat{\bv{\pi}}^d(t) - \hat{\bv{\pi}}^m(t)||_{TV} > \epsilon_d$.  
%abnormal if $|W_{m}(t)|\geq|W_{d}(t)|=d$ and that condition (\ref{eq:split}) is violated. 
%The probability for this to happen is no more than the probability that the empirical distribution of $\{\bv{I}(t), ..., \bv{I}(t+d-1)\}$ is different form the empirical distribution of  $\{\bv{I}(t-d), ..., \bv{I}(t-1)\}$ according to  (\ref{eq:split}). 
Choose $\epsilon_d=2\epsilon_1$. 
%Let $\epsilon=\epsilon_1+\epsilon_d$. 
Similar to the above, we have: 
\begin{eqnarray}
\hspace{-.2in}&&\prob{\|\hat{\bv{\pi}}^d(t) - \hat{\bv{\pi}}^m(t)\|_{TV} \geq \epsilon_d}\label{eq:prob-bdd2}\\
\hspace{-.2in}&& \qquad\leq\prob{\|\tilde{\bv{\pi}}^d(t) - \hat{\bv{\pi}}^m(t)\|_{TV} \geq \epsilon_d-\epsilon_1}\nonumber\\%\label{eq:prob-bdd2}\\
\hspace{-.2in}&& \qquad\leq \prob{ \|\hat{\bv{\pi}}^d(t) -\bv{\pi}\|_{TV} \geq \alpha \epsilon_d/2} \nonumber\\
\hspace{-.2in}&&\qquad\qquad+ \prob{\|  \hat{\bv{\pi}}^m(t)  -\bv{\pi}\|_{TV} \geq (1-\alpha)\epsilon_d/2}. \nonumber
\end{eqnarray}
Using the same argument as in (\ref{eq:prob-bdd}), (\ref{eq:prob-bdd-derive1}) and (\ref{eq:prob-bdd-derive2}), we get: 
\begin{eqnarray}
\hspace{-.2in}&&\prob{\|\hat{\bv{\pi}}^d(t) - \hat{\bv{\pi}}^m(t)\|_{TV} \geq \epsilon_d} \leq \delta. \nonumber
\end{eqnarray}
This shows that  step (i) declares change with probability $\delta$. 

Next we show that step (ii) does not declare a distribution change with high probability. To do so, we use Theorem \ref{theorem:concentration} with $m=2\log(T_l)\sqrt{T_l}$ to have that, when $W_m(t)\geq T_l$, 
\begin{eqnarray*}
\prob{ \| \pi_i^m(t)  -\pi_i\| > \frac{2\log(T_l)}{\sqrt{T_l}} } \leq e^{-2\log(T_l)^2} = T_l^{-2\log(T_l)}. 
\end{eqnarray*}
Using the union bound, we get
\begin{eqnarray}
\prob{ \| \bv{\pi}^m(t)  -\bv{\pi}\| > \frac{2M\log(T_l)}{\sqrt{T_l}} } \leq  MT_l^{-2\log(T_l)}. 
\end{eqnarray} 
Thus, part (b) follows from the union bound over $k$. 
\end{proof}

\section*{Appendix B - Proof of Theorem \ref{theorem:plc-stationary}}\label{section:appendix-B}
\textbf{(Proof of Theorem \ref{theorem:plc-stationary})} 
Here we prove the utility-delay performance of \plc{} for a stationary system. We sometimes omit the $\bv{\pi}$ when it is clear. 
For our  analysis, define:
\vspace{-.06in}
\begin{eqnarray}
\hspace{-.3in}&&g_{s_i}(\bv{\gamma})=\inf_{x^{(s_i)}\in \mathcal{X}_i} \bigg\{Vf(s_i, x^{(s_i)})\label{eq:dual_single}\\
\hspace{-.3in}&&\qquad\qquad\qquad\qquad+\sum_j\gamma_j\big[A_j(s_i, x^{(s_i)})- \mu_j(s_i, x^{(s_i)})\big]\bigg\},\nonumber
\end{eqnarray}
to be the dual function when there is only a single state $s_i$. It is clear from equations (\ref{eq:dual_separable}) and (\ref{eq:dual_single}) that:
%We see that:
\begin{eqnarray}
g(\bv{\gamma}) = \sum_{i}\pi_{i}g_{s_i}(\bv{\gamma}). \label{eq:sum-dual}
\end{eqnarray}
We will also make use of the following results. 
\begin{lemma}\label{lemma:prob_multi_con} \cite{huangneely_dr_tac}  
Suppose the conditions in Theorem \ref{theorem:plc-stationary}  hold. Then, under \plc{} with $\bv{Q}(t)=\bv{q}(t)$, there exist constants $G, \eta_1=\Theta(1)$, i.e., both independent of $V$, such that whenever $\|\bv{q}(t) - \bv{\gamma}^*\|>G$, 
\begin{eqnarray}
\mathbb{E}_{\bv{\pi}} \{  \|  \bv{q}(t+1) - \bv{\gamma}^*  \|  \left.|\right. \bv{q}(t)\}\leq \|\bv{q}(t) - \bv{\gamma}^* \| -\eta_1. \quad \Diamond 
\end{eqnarray}
\end{lemma}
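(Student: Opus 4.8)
The plan is to treat $\bv{q}(t)$ as a subgradient-ascent iterate on the dual function $g(\cdot,\bv{\pi})$ and to show that the scalar distance $Y(t)\triangleq\|\bv{q}(t)-\bv{\gamma}^*\|$ has a uniformly negative conditional drift once $Y(t)$ is large. The crucial structural observation is that when $\bv{Q}(t)=\bv{q}(t)$, the control step (\ref{eq:QLAeq}) selects, for each observed state $s_i$, precisely the action attaining the infimum in $g_{s_i}(\bv{q}(t))$ of (\ref{eq:dual_single}). Hence the realized drift vector $\bv{A}(t)-\bv{\mu}(t)$ is a supporting-hyperplane slope of $g_{s_i}$ at $\bv{q}(t)$, and averaging over $S(t)\sim\bv{\pi}$ makes $\expectm{\bv{A}(t)-\bv{\mu}(t)\,|\,\bv{q}(t)}$ a supergradient of the concave function $g(\cdot,\bv{\pi})=\sum_i\pi_i g_{s_i}(\cdot)$ at $\bv{q}(t)$.

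First I would bound the one-slot drift of the squared distance. Using the queue dynamics (\ref{eq:queuedynamic}) together with $\bv{\gamma}^*\succeq\bv{0}$, the elementary inequality $(\max[u,0]-\gamma)^2\le(u-\gamma)^2$ valid for $\gamma\ge0$ gives, after expanding and using the uniform bound $\delta_{\max}$ on arrivals and services,
\[
\expectm{\tfrac12\|\bv{q}(t+1)-\bv{\gamma}^*\|^2-\tfrac12\|\bv{q}(t)-\bv{\gamma}^*\|^2\,\big|\,\bv{q}(t)}\le B+\big\langle\bv{q}(t)-\bv{\gamma}^*,\,\expectm{\bv{A}(t)-\bv{\mu}(t)\,|\,\bv{q}(t)}\big\rangle,
\]
where $B=\Theta(r\delta_{\max}^2)=\Theta(1)$. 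Next I would apply the supergradient inequality for the concave $g$ at $\bv{q}(t)$, which yields $\big\langle\bv{q}(t)-\bv{\gamma}^*,\expectm{\bv{A}(t)-\bv{\mu}(t)|\bv{q}(t)}\big\rangle\le -\big(g(\bv{\gamma}^*,\bv{\pi})-g(\bv{q}(t),\bv{\pi})\big)$, and then invoke the polyhedral property (\ref{eq:polyhedral}) with $\rho=\Theta(1)$ to replace the right side by $-\rho\|\bv{q}(t)-\bv{\gamma}^*\|$. Combining gives the squared-distance drift bound $\expectm{\tfrac12(Y(t+1)^2-Y(t)^2)\,|\,\bv{q}(t)}\le B-\rho Y(t)$.

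The final step converts this into a drift bound on $Y(t)$ itself. Writing $Y(t+1)^2-Y(t)^2=2Y(t)(Y(t+1)-Y(t))+(Y(t+1)-Y(t))^2$, taking conditional expectations, and discarding the nonnegative term $\expectm{(Y(t+1)-Y(t))^2|\bv{q}(t)}$, I obtain $2Y(t)\,\expectm{Y(t+1)-Y(t)\,|\,\bv{q}(t)}\le 2B-2\rho Y(t)$, so that $\expectm{Y(t+1)-Y(t)\,|\,\bv{q}(t)}\le B/Y(t)-\rho$. Choosing $G\triangleq 2B/\rho$ forces $B/Y(t)<\rho/2$ whenever $Y(t)>G$, which gives the claimed negative drift with $\eta_1=\rho/2$; both $G$ and $\eta_1$ are $\Theta(1)$ precisely because $B$ and $\rho$ are.

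I expect the main obstacle to be the rigorous justification of the supergradient identification in the first paragraph: one must verify that the maximizing action in (\ref{eq:QLAeq}) under $\bv{Q}(t)=\bv{q}(t)$ coincides with the infimizing action in (\ref{eq:dual_single}) for every state, that $S(t)\sim\bv{\pi}$ is independent of $\bv{q}(t)$ so the conditional expectation indeed produces the true supergradient of $g(\cdot,\bv{\pi})$, and that the $\max[\cdot,0]$ projection in the queue update does not spoil the squared-distance drift inequality — this last point is exactly where $\bv{\gamma}^*\succeq\bv{0}$ is used. Once these are in place, the polyhedral property does the heavy lifting of turning a merely nonpositive drift into a strictly negative one bounded away from zero.
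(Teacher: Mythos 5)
Your proposal is correct, and it is essentially the argument behind the paper's proof: the paper itself gives no proof but imports the lemma from \cite{huangneely_dr_tac}, whose proof is exactly your route --- expand the squared distance $\|\bv{q}(t+1)-\bv{\gamma}^*\|^2$ using the queue dynamics and $\bv{\gamma}^*\succeq\bv{0}$, identify $\expectm{\bv{A}(t)-\bv{\mu}(t)\,|\,\bv{q}(t)}$ as a supergradient of the concave dual $g(\cdot,\bv{\pi})$ at $\bv{q}(t)$ (valid since the \bp{} maximization in (\ref{eq:QLAeq}) with $\bv{Q}(t)=\bv{q}(t)$ attains the infimum in (\ref{eq:dual_single}) and $S(t)$ is i.i.d.\ independent of $\bv{q}(t)$), invoke the polyhedral bound (\ref{eq:polyhedral}), and convert the quadratic drift to a drift on the norm itself with $G=\Theta(B/\rho)$ and $\eta_1=\rho/2$. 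The only cosmetic difference is that the reference converts $\expectm{Y(t+1)^2\,|\,\bv{q}(t)}\leq (Y(t)-\eta_1)^2$ via Jensen's inequality rather than your decomposition discarding $(Y(t+1)-Y(t))^2$; both are sound.
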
 
% To show our results, we also  make use of the following theorem regarding distribution convergence 

\begin{lemma} \label{lemma:estimate-error} \cite{huang-learning-sig-14}
If $\|\bv{\pi}_a(t) - \bv{\pi}\|_{TV} \leq \epsilon$ and (\ref{eq:slackness}) holds for $\bv{\pi}_a(t)$, then $\bv{\gamma}^*(t)$ satisfies: 
\begin{eqnarray}
\| \bv{\gamma}^*(t) - \bv{\gamma}^*\|\leq b_0V \epsilon,
 \label{eq:beta-rate-delta} 
\end{eqnarray}
where $b_0=\Theta(1)$. $\Diamond$
\end{lemma}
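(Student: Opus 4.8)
The plan is to exploit two structural facts about the dual function: it is \emph{affine} in the distribution argument $\bv{\pi}$ (by the separability identity (\ref{eq:sum-dual}), $g(\bv{\gamma},\bv{\pi})=\sum_i\pi_i g_{s_i}(\bv{\gamma})$), and it is \emph{sharp} at its maximizer in the multiplier argument $\bv{\gamma}$ (the polyhedral property (\ref{eq:polyhedral})). Writing $\bv{\gamma}^*=\bv{\gamma}^*_{\bv{\pi}}$ and $\bv{\gamma}^*(t)=\bv{\gamma}^*_{\bv{\pi}_a(t)}$ for the two maximizers, the overall strategy is: (i) bound the magnitudes $\|\bv{\gamma}^*\|$ and $\|\bv{\gamma}^*(t)\|$ by $O(V)$; (ii) use the affineness in $\bv{\pi}$ to show each of these multipliers changes the dual value by at most $O(V\epsilon)$ when $\bv{\pi}$ is perturbed by $\epsilon$ in total variation; (iii) combine these in a telescoping sandwich to bound the true-distribution dual gap $g(\bv{\gamma}^*,\bv{\pi})-g(\bv{\gamma}^*(t),\bv{\pi})$ by $O(V\epsilon)$; and (iv) invoke sharpness to convert this value gap into the desired distance bound $\|\bv{\gamma}^*(t)-\bv{\gamma}^*\|\le b_0 V\epsilon$.

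For step (i), I would use the slackness hypothesis of the lemma — that (\ref{eq:slackness}) holds for $\bv{\pi}_a(t)$ — in the standard multiplier-boundedness argument. Evaluating $g(\bv{\gamma},\bv{\pi}_a(t))$ at the slack-achieving randomized policy gives $g(\bv{\gamma},\bv{\pi}_a(t))\le V\delta_{\max}-\eta_0\|\bv{\gamma}\|_1$ for every $\bv{\gamma}\succeq\bv{0}$, since the constraint terms contribute at most $-\eta_0\sum_j\gamma_j$ and $|f|\le\delta_{\max}$. Comparing with the lower bound $g(\bv{\gamma}^*(t),\bv{\pi}_a(t))\ge g(\bv{0},\bv{\pi}_a(t))\ge -V\delta_{\max}$ yields $\|\bv{\gamma}^*(t)\|_1\le 2V\delta_{\max}/\eta_0=O(V)$. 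The identical argument under the true distribution (via Assumption \ref{assumption:bdd-LM}) gives $\|\bv{\gamma}^*\|_1=O(V)$. This also confirms finiteness, consistent with the lemma's hypothesis ruling out the degenerate $\bv{\gamma}^*(t)=\infty$ case.

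For steps (ii)–(iii), the affineness identity (\ref{eq:sum-dual}) gives $|g(\bv{\gamma},\bv{\pi})-g(\bv{\gamma},\bv{\pi}_a(t))|\le\|\bv{\pi}-\bv{\pi}_a(t)\|_{tv}\cdot\max_i|g_{s_i}(\bv{\gamma})|$, and since $|g_{s_i}(\bv{\gamma})|\le V\delta_{\max}+2\delta_{\max}\|\bv{\gamma}\|_1$ this perturbation is $O(V\epsilon)$ whenever $\|\bv{\gamma}\|_1=O(V)$ — in particular at $\bv{\gamma}=\bv{\gamma}^*$ and $\bv{\gamma}=\bv{\gamma}^*(t)$ by step (i). I would then decompose
\[
g(\bv{\gamma}^*,\bv{\pi})-g(\bv{\gamma}^*(t),\bv{\pi}) = \big[g(\bv{\gamma}^*,\bv{\pi})-g(\bv{\gamma}^*,\bv{\pi}_a(t))\big] + \big[g(\bv{\gamma}^*,\bv{\pi}_a(t))-g(\bv{\gamma}^*(t),\bv{\pi}_a(t))\big] + \big[g(\bv{\gamma}^*(t),\bv{\pi}_a(t))-g(\bv{\gamma}^*(t),\bv{\pi})\big],
\]
where the middle bracket is $\le 0$ because $\bv{\gamma}^*(t)$ maximizes $g(\cdot,\bv{\pi}_a(t))$, and the two outer brackets are each $O(V\epsilon)$ by the perturbation bound just established. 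Hence the true-distribution value gap is at most $O(V\epsilon)$.

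Finally, step (iv) applies the polyhedral property (\ref{eq:polyhedral}) under $\bv{\pi}$, namely $g(\bv{\gamma}^*,\bv{\pi})-g(\bv{\gamma}^*(t),\bv{\pi})\ge\rho\|\bv{\gamma}^*-\bv{\gamma}^*(t)\|$, to conclude $\rho\|\bv{\gamma}^*-\bv{\gamma}^*(t)\|\le O(V\epsilon)$, i.e. $\|\bv{\gamma}^*(t)-\bv{\gamma}^*\|\le b_0V\epsilon$ with $b_0=\Theta(1/\rho)=\Theta(1)$ (the switch between $\ell_1$ and Euclidean norms costs only the fixed dimension factor $r$). The main obstacle is precisely step (iv): a function-value gap alone cannot control the distance between maximizers without a sharpness/error-bound condition, so the argument hinges on the polyhedral property, which is available under the standing $\rho=\Theta(1)$ hypothesis of the theorems invoking this lemma; mere uniqueness of the maximizer (Assumption \ref{assumption:unique}) would not give a quantitative linear rate. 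A secondary point requiring care is that the Lipschitz-in-$\bv{\pi}$ constant must be evaluated only at the two genuine optimizers, where the $O(V)$ magnitude bound of step (i) holds, rather than at an arbitrary $\bv{\gamma}$.
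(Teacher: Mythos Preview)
The paper does not supply its own proof of this lemma: it is quoted verbatim from \cite{huang-learning-sig-14} and used as a black box in Appendices B--E. So there is no in-paper argument to compare against.

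That said, your proposed proof is correct and is exactly the natural route for this type of perturbation bound. You have identified the two structural ingredients that make it work: the affineness of $g(\bv{\gamma},\bv{\pi})$ in $\bv{\pi}$ via (\ref{eq:sum-dual}), which turns a total-variation perturbation of size $\epsilon$ into an $O(V\epsilon)$ value perturbation once the multipliers are known to be $O(V)$; and the polyhedral sharpness (\ref{eq:polyhedral}), which converts a value gap at the true $\bv{\pi}$ back into a distance bound on the maximizers. The slackness hypothesis enters precisely where you use it, in bounding $\|\bv{\gamma}^*(t)\|_1$, and your telescoping sandwich with the nonpositive middle bracket is the standard move. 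Your caveat that mere uniqueness (Assumption~\ref{assumption:unique}) would not yield a linear rate is well taken; the lemma as stated does not list the polyhedral property among its hypotheses, but every theorem in the paper that invokes the lemma does assume it, so your reliance on it is consistent with how the result is actually used.
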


\begin{lemma} \label{lemma:queue-bdd-drift} \cite{neely-convergence-16}
Suppose $Z(t)$ is a real-value random process with initial value $z_0$ that satisfies: 
\begin{enumerate}
\item $|Z(t+1)-Z(t)|\leq Z_{\max}$ where $Z_{\max}>0$
\item $\expect{Z(t+1)-Z(t)\,|\,Z(t)}\leq z(t)$ where $z(t) = Z_{\max}$ when $Z(t)<Z_u$ and $z(t) = -\eta$ with $0\leq \eta\leq Z_{\max}$ when $Z(t)\geq Z_u$ for some constant $Z_u$. 
\end{enumerate}
Then, there exist constants $r_z=\Theta(1)$, $0<\rho_z<1$, and $D=\frac{(e^{r_zZ_{\max}} -\rho_z )e^{r_zZ_u}  }{1-\rho_z}$, such that for every slot $t$, 
\begin{eqnarray}
\expect{e^{r_zZ(t)}} \leq D+ (e^{r_zz_0}  - D)\rho_z^t. \quad \Diamond
\end{eqnarray}
\end{lemma}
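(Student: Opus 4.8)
The plan is to establish a one-step geometric drift inequality for the exponential Lyapunov function $e^{r_z Z(t)}$ and then iterate it. Concretely, I would show that there exist $r_z=\Theta(1)>0$ and $\rho_z\in(0,1)$ such that, writing $C\triangleq (e^{r_z Z_{\max}}-\rho_z)e^{r_z Z_u}$, we have for every $t$
\[
\expect{e^{r_z Z(t+1)}\,\big|\,Z(t)}\leq \rho_z\, e^{r_z Z(t)} + C. \qquad (\star)
\]
Once $(\star)$ is in hand, taking total expectations gives $Y(t+1)\leq \rho_z Y(t)+C$ for $Y(t)\triangleq\expect{e^{r_z Z(t)}}$, and since $D=C/(1-\rho_z)$ is the fixed point of the map $y\mapsto \rho_z y + C$, subtracting $D$ turns this into $Y(t+1)-D\leq \rho_z\,(Y(t)-D)$. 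Iterating from $Y(0)=e^{r_z z_0}$ then yields exactly $Y(t)\leq D+(e^{r_z z_0}-D)\rho_z^t$, which is the claim. Thus the entire problem reduces to proving $(\star)$ with the sharp constant $C$.

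The core of the argument is to prove $(\star)$ by splitting on the value of $Z(t)$ relative to the threshold $Z_u$. Let $\Delta\triangleq Z(t+1)-Z(t)$, so $|\Delta|\leq Z_{\max}$ always. In the negative-drift region $Z(t)\geq Z_u$, I would use the pointwise Taylor-with-remainder bound $e^{r_z x}\leq 1+r_z x+\tfrac{1}{2}(r_z x)^2 e^{r_z Z_{\max}}$, valid for all $|x|\leq Z_{\max}$, applied to $x=\Delta$. Taking conditional expectations and using $\expect{\Delta\,|\,Z(t)}\leq -\eta$ together with $\expect{\Delta^2\,|\,Z(t)}\leq Z_{\max}^2$ gives $\expect{e^{r_z\Delta}\,|\,Z(t)}\leq 1-r_z\eta+\tfrac{1}{2}r_z^2 Z_{\max}^2 e^{r_z Z_{\max}}\triangleq \rho_z$, so that $\expect{e^{r_z Z(t+1)}\,|\,Z(t)}\leq \rho_z e^{r_z Z(t)}\leq \rho_z e^{r_z Z(t)}+C$ (using $C\geq 0$). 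In the complementary region $Z(t)<Z_u$ I would discard the drift information and use only boundedness: $Z(t+1)\leq Z(t)+Z_{\max}$ deterministically, hence $e^{r_z Z(t+1)}\leq e^{r_z Z_{\max}}e^{r_z Z(t)}$; since $e^{r_z Z(t)}\leq e^{r_z Z_u}$ on this event, the excess $(e^{r_z Z_{\max}}-\rho_z)e^{r_z Z(t)}$ beyond $\rho_z e^{r_z Z(t)}$ is at most $(e^{r_z Z_{\max}}-\rho_z)e^{r_z Z_u}=C$, which is again $(\star)$. It is precisely this multiplicative (rather than additive) bound in the second region that produces the sharp $C$ and therefore the stated $D$.

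The remaining point, and the only real obstacle, is the choice of $r_z$ guaranteeing $\rho_z\in(0,1)$. Viewing $\rho_z(r_z)=1-r_z\eta+\tfrac{1}{2}r_z^2 Z_{\max}^2 e^{r_z Z_{\max}}$ as a function of $r_z$, we have $\rho_z(0)=1$ and $\rho_z'(0)=-\eta<0$, so for all sufficiently small $r_z>0$ the quadratic term is dominated by the linear drift term and $\rho_z<1$, while positivity $\rho_z>0$ is automatic for small $r_z$. Because $\eta$ and $Z_{\max}$ are $\Theta(1)$, a valid $r_z$ can be taken $\Theta(1)$ as well, and one checks $e^{r_z Z_{\max}}>1>\rho_z$ so that $C\geq 0$ as used above. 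This step implicitly requires a genuinely negative drift $\eta>0$ in the outer region; with that in place all constants $r_z,\rho_z,D$ are well-defined and the linear-recursion iteration from the first paragraph completes the proof.
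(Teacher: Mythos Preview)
The paper does not prove this lemma; it is quoted from the cited reference \cite{neely-convergence-16} and used as a black box in Appendices B--E. So there is no ``paper's own proof'' to compare against. Your argument is the standard Foster--Lyapunov exponential-drift proof for such bounds and is correct: the two-case split on $Z(t)\gtrless Z_u$, the second-order Taylor bound on $e^{r_z\Delta}$ in the negative-drift region to define $\rho_z$, the crude bound $e^{r_z Z(t+1)}\leq e^{r_zZ_{\max}}e^{r_zZ(t)}$ in the inner region to get exactly the stated additive constant $C=(e^{r_zZ_{\max}}-\rho_z)e^{r_zZ_u}$, and the iteration $Y(t+1)-D\leq\rho_z(Y(t)-D)$ are precisely how this result is established in the source.

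Your remark that the argument needs $\eta>0$ strictly is a genuine observation: the lemma as stated allows $\eta=0$, but then $\rho_z'(0)=0$ and one cannot obtain $\rho_z<1$ by this method (nor is the conclusion true with $\rho_z<1$ in that degenerate case). In all applications of the lemma within the paper, however, $\eta$ is the $\Theta(1)$ drift constant from Lemma~\ref{lemma:prob_multi_con}, so $\eta>0$ holds and your proof goes through.
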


%We now state the proof of the convergence time for $\mathtt{OLAC2}$.
%
%\begin{proof} (Theorem \ref{theorem:lac2-convergence}) Choosing $n=T_l=V^{c}$ and $m=4V^{c/2}\log(V)$ in Theorem \ref{theorem:concentration}, and let $X_t$ be the indicator of state $s_i$ at time $t$ for $0\leq t\leq n-1$. Then, we have $\expect{X}=V^c\pi_{s_i}$. Thus, using the fact that $\pi_{s_i}\leq1$, we see that at time $t=T_l$,
%\begin{eqnarray*}
% \prob{\pi_{s_i}(t)\leq \pi_{s_i} - \frac{4\log(V)}{V^{c/2}}  } &\leq& e^{-8[\log(V)]^2}, \label{eq:low-tail}\\
% \prob{\pi_{s_i}(t)\geq  \pi_{s_i} + \frac{4\log(V)}{V^{c/2}} } &\leq& e^{-\frac{8[\log(V)]^2}{ 1 + \frac{2}{3}\log(V) V^{-c/2} }}. %\Diamond%\label{eq:high-tail}
%\end{eqnarray*}
%Using  the union bound,  we see that when $V$ is large enough such that $\frac{2}{3}\log(V)V^{-c/2}\leq1$,
%\begin{eqnarray}
%\prob{\max_{s_i}|\pi_{s_i}(t) - \pi_{s_i}| \geq \frac{4\log(V)}{V^{c/2}} } \leq Me^{-4[\log(V)]^2}.
%\end{eqnarray}
%

Now we prove Theorem \ref{theorem:plc-stationary}. %We sometimes write $g(\bv{\gamma})$ when it is clear. 
\begin{proof} (Theorem \ref{theorem:plc-stationary}) 
(\textbf{Part (a) - Utility}) Define a Lyapunov function $L(t)\triangleq\frac{1}{2}\sum_jq_j(t)^2$. Then,  define the one-slot Lyapunov drift $\Delta(t)\triangleq\expect{ L(t+1) - L(t)\,|\,\bv{q}(t) }$. Using the queueing dynamic equation (\ref{eq:queuedynamic}), we have: 
\begin{eqnarray}
\Delta(t) \leq B - \sum_jq_j(t)\expect{ \mu_j(t)  -A_j(t) \,|\,\bv{q}(t)  }. 
\end{eqnarray}
Here $B\triangleq r\delta_{\max}^2$, and  the expectation is taken over $\bv{\pi}$ and the potential randomness in action selection. 
Adding to both sides the term $V\expect{f(t) \,|\,\bv{q}(t) }$, we first obtain: 
\begin{eqnarray}
\hspace{-.2in}&&\Delta(t) + V\expect{f(t) \,|\,\bv{q}(t) } \leq B \label{eq:drift-v}\\
\hspace{-.2in}&&\qquad\qquad + \sum_j\expect{ Vf(t) - q_j(t) [\mu_j(t)  - A_j(t)] \,|\,\bv{q}(t) }. \nonumber
\end{eqnarray}
Now add to both sides the term $\Delta_1(t)\triangleq \expect{ (\gamma^*_j(t)-\theta)^+[\mu_j(t)  - A_j(t)] \,|\,\bv{q}(t) }$, we get: 
\begin{eqnarray}
\hspace{-.2in}&&\Delta(t) + V\expect{f(t) \,|\,\bv{q}(t) } +\Delta_1(t) \label{eq:drift-v2}\\
\hspace{-.2in}&&\qquad\leq B + \sum_j\expect{  Vf(t) + Q_j(t) [\mu_j(t)  - A_j(t)] \,|\,\bv{q}(t) }. \nonumber\\
\hspace{-.2in}&&\qquad = B + g(\bv{Q}(t)) \nonumber\\ %\leq B + g_{\bv{\pi}}^*\nonumber\\
\hspace{-.2in}&&\qquad \leq B + f^{\bv{\pi}}_{\text{av}}. 
\end{eqnarray} 
%Here the  equality follows from the definition of $g(\bv{\gamma})$ and (\ref{eq:sum-dual}).  
The last inequality holds as follows. Define a convexified dual function as: 
\begin{eqnarray*}
\hspace{-.3in}&&\tilde{g}_{s_i}(\bv{\gamma})=\inf_{x^{(s_i)}_k\in \mathcal{X}_i} \bigg\{\sum_ka^{(s_i)}_kVf(s_i, x^{(s_i)})\label{eq:dual_single_conv}\\
\hspace{-.3in}&& \qquad\qquad +\sum_ka^{(s_i)}_k\sum_j\gamma_j\big[A_j(s_i, x_k^{(s_i)})- \mu_j(s_i, x_k^{(s_i)})\big]\bigg\},\nonumber
\end{eqnarray*}
and $\tilde{g}(\bv{\gamma}) = \sum_{i}\pi_{i}\tilde{g}_{s_i}(\bv{\gamma})$. We have $\tilde{g}(\bv{\gamma}) = g(\bv{\gamma})$. To see this, suppose one $x_k^{(s_i)}$ maximizes the term $Vf(s_i, x_k^{(s_i)}) + \sum_j\gamma_j\big[A_j(s_i, x_k^{(s_i)})- \mu_j(s_i, x_k^{(s_i)})\big]$.  Then, the optimal choice of $\{a^{(s_i)}_k\}$ values is to set $a^{(s_i)}_k=1$ and the other values zero, in which case we obtain $\tilde{g}(\bv{\gamma}) = g(\bv{\gamma})$.  
From the definition of $\tilde{g}(\bv{\gamma})$, we can exactly view the probabilities $\{a_k^{(s_i)}\}$ as specifying a stationary and randomized policy that achieves the minimum for a given $\bv{\gamma}$. 
Hence, the value achieved must be no larger than any other such policies, including the ones that achieve the optimal utility subject to stability, which is guaranteed to exist \cite{neelynowbook}, resulting in $g(\bv{Q}(t)) \leq f^{\bv{\pi}}_{\text{av}}$. 

%Here the  equality follows from the definition of $g(\bv{\gamma})$ and (\ref{eq:sum-dual}), and the last inequality uses $g(\bv{Q}(t))\leq g_{\bv{\pi}}^*=f^{\bv{\pi}}_{\text{av}}$. 
%Thus, we can now see that \plc{} indeed minimizes the right-hand-side of the drift. Thus, by plugging into (\ref{eq:drift-v2}) the optimal solution to ===, it becomes: 
%\begin{eqnarray}
%\Delta(t) + V\expect{f(t) \,|\,\bv{q}(t) } +\Delta_1(t) \leq B + Vf^{\bv{\pi}}_{\text{av}}. \nonumber
%\end{eqnarray}
Taking an expectation over $\bv{q}(t)$, carrying out a telescoping sum from $t=0$ to $t=T-1$, and dividing both sides by $VT$, we obtain: 
\begin{eqnarray}
\frac{1}{T}\sum_{t=0}^{T-1} \expect{f(t) }  \leq f^{\bv{\pi}}_{\text{av}} +B/V - \frac{1}{VT}\sum_{t=0}^{T-1}\expect{\Delta_1(t)}. \label{eq:aug-drift-step0}
\end{eqnarray}
To prove the utility performance, it remains to show that the last term is $O(1)$ in the limit, i.e., 
\begin{eqnarray}
\lim_{T\rightarrow\infty}\frac{1}{T}\sum_{t=0}^{T-1}\sum_j\expect{(\gamma^*_j(t)-\theta)^+[\mu_j(t)  - A_j(t)]} =O(1). \label{eq:drift-1overV}
\end{eqnarray}

To prove (\ref{eq:drift-1overV}), consider the system evolution over the timeline. 
From the detection algorithm, we see that the timeline is divided into intervals separated by reset points. 
%consist of samples either in $\mathcal{W}_{m}(t)$ or in $\mathcal{W}_{d}(t)$. 
Moreover, since $\mathcal{W}_{m}(t)$ and $\mathcal{W}_{d}(t)$ are restarted, and $\bv{q}(t)$ is set to zero at reset points, 
%samples in the two windows are both discarded and the queues are reset when change is declared, 
  these intervals form renewal cycles with initial backlog $\bv{q}(t)=\bv{0}$ (see Fig. \ref{fig:timeline-structure}).
% 
%According to the \ade($\bv{\pi}$) component and using Lemma $1$ in \cite{huangneely_dr_tac} we always have $\bv{\gamma}^*(t)=O(V\log(V))$. %Thus, we prove (\ref{drift-1overV}) by showing that === 
\begin{figure}[ht]
\begin{center}
\vspace{-.15in}
\includegraphics[width=3.3in, height=0.8in]{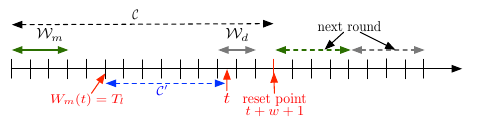}
\vspace{-.2in}
\caption{Timeline divided into intervals.}
\label{fig:timeline-structure}
\end{center}
\vspace{-.15in}
\end{figure}

Label the cycles by $\{\mathcal{C}_k, k=0, 1, ...\}$. We thus have:  
\begin{eqnarray}
\hspace{-.2in}&&\lim_{T\rightarrow\infty}\frac{1}{T}\sum_{t=0}^{T-1}\sum_j\expect{(\gamma^*_j(t)-\theta)^+[\mu_j(t)  - A_j(t)]} \label{eq:renewal-cycle} \\
\hspace{-.2in}&& =\frac{\expect{\text{cost}}}{\expect{\text{length}}}\triangleq\frac{\expect{\sum_{t\in \mathcal{C}_k}\sum_j  (\gamma^*_j(t)-\theta)^+[\mu_j(t)  - A_j(t)]   }  }{ \expect{|\mathcal{C}_k|}  }. \nonumber
\end{eqnarray} 
Below, we omit the index $k$ and use $d_{m}\triangleq\max_{t}\{b_d^s(t)-b_m(t)\}$  to denote the size of $\mathcal{C}$. % before it terminates. 
%roughly how long the empirical multiplier remains unchanged, i.e., \plc{} use the same $\bv{\pi}_a(t)=\hat{\bv{\pi}}^m(t)$. Denote this period as $\mathcal{C}'$. 
% for computing the empirical multiplier. 
%the two intervals in a cycle. We also use $t_{s}$ and $t_{e}$ to denote the beginning and ending time of the $\mathcal{W}_{m}$ interval, respectively. 
% 
Also, let $c_0$ be such that $e_w=\Theta(V^{-c_0/2})$, and write $T_l=V^{c_1}$ where $c_1\triangleq\max(c, c_0)$. Since $e_w>0$, we have $T_l<\infty$. 
%, aV^{c_0})$. 
%Thus, $c_0=0$ if $e_w=\Theta(1)$ and $c_0=\infty$ if $e_w=0$. 

We first show that the probability for having a small $d_{m}$ (w.r.t. $T_l$) is small. 
Denote the event $\mathcal{E}_1$ that $d_{m} \geq V^{2+c_1}$ and $W_m(t)=T_l$ at $t=T_l+d-w$ slots from the beginning of $\mathcal{C}$, i.e., step (i) of \ade{} does not declare any change before $W_m(t)=T_l$ occurs. 
%to be the event that \ade{} does not declare any change in the first $V^{2+c_1}$ slots (which implies that $\{ d_{m} \geq V^{2+c_1}\}$). 
%  
Using Lemma \ref{lemma:detection}, $T_l\geq V^{c}$, and the fact that for a large $V$, $d=\log(V)^3/\epsilon_d^2\geq\frac{2}{\epsilon_d^2}\ln(4/\delta)+w+1$ for $\delta=V^{-\log(V)}$, we have that: 
%derive a bound on $\expect{|C_j|}$. Using Lemma \ref{lemma:detection}, we have that: 
\begin{eqnarray} 
 \prob{  \mathcal{E}_1^c} &\leq& \delta T_l+ V^{2+c_1}\cdot (\delta + (w+1)MT_l^{-2\log(T_l)}) \nonumber\\
&\leq&V^{-c^2\log(V)} \leq  V^{-3}. \label{eq:length-prob}
\end{eqnarray} 
Here $\mathcal{E}_1^c$ denotes the complementary event of $\mathcal{E}_1$. 
Therefore, with probability at least $1-V^{-3}$, $d_m\geq V^{2+c_1}$, which  implies that: 
\begin{eqnarray}
 \expect{|\mathcal{C}|} \geq V^{2+c_1}/2. \label{eq:length-exp}
\end{eqnarray}

%Conditioning on the event $\mathcal{E}_1$ happening, we now consider two cases: (a) $e_w=aV^{-c/2}$ for some constant $a$ and (b) $e_w=o(V^{-c/2})$. 
%\textbf{Case (a)} 

%Since $c\in(0, 1)$, 
Conditioning on $\mathcal{E}_1$, we see that \plc{} will compute the empirical multiplier with statistics in $\script{W}_m(t)$ after $t=T_l+d-w$ slots, and use it until a new change is declared (see Fig. \ref{fig:timeline-structure}). 
% 
%In this case, we see that after the empirical multiplier is computed, it remains unchanged for 
Denote this period of time, i.e., after the multiplier is computed until the cycle ends, by $\mathcal{C}'$ and its length  by $d'_m$ (see Fig. \ref{fig:timeline-structure}).  
%
%Denote the second event $\script{E}'_1$ to be that $\mathcal{C}'$ starts $T_l$ slots after $\mathcal{C}$ starts, i.e., step (i) in \ade{} does not declare any change. We see from above that conditioning on $\script{E}_1$, $\prob{\script{E}'_1|\script{E}_1}\geq 1-O(V^{-3})$. 
% 
We have $d'_m=d_m-O(V^{c_1}) -w-1 = \Omega(V^{2+c_1})$ time (the first $V^{c_1}$ slots are for learning and the last $O(w+1)$ slots are not contained in both $\mathcal{W}_m(t)$ and $\mathcal{W}_d(t)$ due to  \ade). 
%Denote this period of time by $\mathcal{C}'$. 

%\footnote{In the $w<d$ case, the change point detected is behind the current time.} which implies that the estimated multiplier will be used for an $O(V^3)$ time. 

%==stop here==

Denote another event  $\mathcal{E}_2\triangleq\{ \|\hat{\bv{\pi}}^m(t) - \bv{\pi}\|\leq  \frac{4\log(V)}{V^{c_1/2}}\}$, where $t$ is when $W_m(t)=T_l$. That is, 
%\max_j|\hat{\pi}^m_j(t) - \pi_j| \leq  \frac{4\log(V)}{V^{c_1/2}}\}$, 
 the distribution $\hat{\bv{\pi}}^m(t)$ is close to the true distribution, for $t\in\mathcal{C}'$ (Note that $\hat{\bv{\pi}}^m(t)$ remains constant during $\mathcal{C}'$). 
%\footnote{The actual accuracy can be better if $w>V^c$. But this suffices for the analysis.} 
Using Theorem \ref{theorem:concentration}, we have that: % the unconditional probability satisfies: 
\begin{eqnarray}
\prob{\mathcal{E}_2} \geq 1-Me^{-4\log(V)^2}.\label{eq:event2-prob}
\end{eqnarray}
Thus, 
\begin{eqnarray}
\hspace{-.3in}&& \prob{\mathcal{E}_2 \,|\, \mathcal{E}_1} = \frac{ \prob{\mathcal{E}_2\cap\mathcal{E}_1}  }{\prob{\mathcal{E}_1}}
\geq  1-Me^{-4\log(V)^2} -V^{-3}  \nonumber \\
\hspace{-.3in}&&\qquad\qquad\qquad\qquad\qquad\quad\,\,\, \geq 1- 2V^{-3}. \label{eq:event3-prob}   
% 2Me^{-4\log(V)^2}
\end{eqnarray}

With $\mathcal{E}_1$ and $\mathcal{E}_2$, we now bound $\expect{ \text{cost} }$, where  $\text{cost} \triangleq\sum_{t\in \mathcal{C}}\sum_j  (\gamma^*_j(t)-\theta)^+[\mu_j(t)  - A_j(t)]$. 
First, when $\mathcal{E}_1^c$ takes place, we either have $d_{m} \leq V^{2+c_1}$, denoted by $\mathcal{E}_{1a}^c$, or $d_{m} \geq V^{2+c_1}$ but step (i) of \ade{} declares changes before $W_m(t)=T_l$, denoted by $\mathcal{E}_{1b}^c$. Given $\mathcal{E}_{1a}^c$, the cost is no more than $V\log(V)\delta_{\max} V^{2+c_1}$. For $\mathcal{E}_{1b}^c$, we first see that: 
%its probability can be bounded by: 
%is no more than one change declared during have: 
\begin{eqnarray}
\hspace{-.2in}&& \prob{\mathcal{E}_{1b}^c}\nonumber\\
\hspace{-.2in}&=&\prob{ d_{m} \geq V^{2+c_1} \text{and} \nonumber\\
\hspace{-.2in}&&\quad \text{at least one  change declared in the first}\, T_l+d-w\, \text{slots}} \nonumber\\
\hspace{-.2in}&\leq& \delta (T_l+d-w) \leq V^{-\log(V)/2}. \label{eq:prob-e-1b-c}
\end{eqnarray}
Also, given $\mathcal{E}_{1b}^c$, if we denote the first time a change is declared by $T_{1b}$, we have: 
\begin{eqnarray}
 \expect{|\mathcal{C}| \,|\,\mathcal{E}_{1b}^c} &\leq&  T_{1b}+ \expect{|\mathcal{C}| \,|\,d_{m} \geq V^{2+c_1}-T_{1b}}  \nonumber\\
 &\leq& T_l+d+2\expect{|\mathcal{C}|} \leq 3 \expect{|\mathcal{C}|}. \label{eq:cond-cycle-foo}
\end{eqnarray} 
The first step follows because after the first declaration, the requirement for any additional declaration is removed, and the second step follows because  $T_{1b}\leq T_l+d-w$ and $\expect{|\mathcal{C}| \,|\,d_{m} \geq V^{2+c_1}-T_{1b}}\leq 2\expect{|\mathcal{C}|}$. 
%This is so because once step (i) declares 
Thus, 
\begin{eqnarray} 
 &&\expect{\text{cost} \,|\,\mathcal{E}_1^c} \label{eq:bound-ex-1}
\\ 
&& \qquad \leq V\log(V)\delta_{\max}\cdot (\frac{V^{2+c_1}}{V^{c^2\log(V)} } + \frac{3\expect{|\mathcal{C}|}}{V^{\log(V)/2}}). \nonumber 
%\nonumber\\
%\qquad\qquad\quad\,\,= V^{3+c_1}\log(V)\delta_{\max}.  \label{eq:bound-ex-1}
\end{eqnarray}
Here we have used Lemma $1$ in \cite{huangneely_dr_tac} and the learning step in \plc{} that ensures  $\bv{\gamma}^*(t)=O(V\log(V))$. 
On the other hand, 
\begin{eqnarray}
\expect{\text{cost} \,|\,\mathcal{E}_1, \mathcal{E}_2^c}\leq V\log(V)\delta_{\max}\expect{|\mathcal{C}|\,|\,\mathcal{E}_1, \mathcal{E}_2^c}.  \label{eq:bound-ex-2}
\end{eqnarray}
Let us now bound $\expect{|\mathcal{C}|\,|\,\mathcal{E}_1, \mathcal{E}_2^c}$. 
Define $\script{E}_{2a}^c = \{ y\in(\sigma, 2\sigma]  \}$ and  $\script{E}_{2b}^c = \{ y>2\sigma  \}$, where $y\triangleq\|\hat{\bv{\pi}}^m(t) - \bv{\pi}\|$ and $\sigma\triangleq\frac{4\log(V)}{V^{c_1/2}}$. 
We have: 
\begin{eqnarray}
\hspace{-.2in}&&\prob{\|\hat{\bv{\pi}}^m(t)-\hat{\bv{\pi}}^d(t)\|>\epsilon_d \,|\,\script{E}_1,  \mathcal{E}_2^c} \label{eq:exp-c-step-1} \\ 
\hspace{-.2in}&&\quad\,\,= \prob{\|\hat{\bv{\pi}}^m(t)-\hat{\bv{\pi}}^d(t)\|>\epsilon_d \,|\,\script{E}_1, \script{E}_{2a}^c}  \prob{ \script{E}_{2a}^c |  \script{E}_1, \script{E}_2^c}\nonumber\\
\hspace{-.2in}&&\qquad\quad+ \prob{\|\hat{\bv{\pi}}^m-\hat{\bv{\pi}}^d(t)\|>\epsilon_d \,|\,\script{E}_1, \script{E}_{2b}^c}  \prob{ \script{E}_{2b}^c |  \script{E}_1, \script{E}_{2}^c}. \nonumber
\end{eqnarray}
Let us relate $\prob{\|\hat{\bv{\pi}}^m(t)-\hat{\bv{\pi}}^d(t)\|>\epsilon_d \,|\,\script{E}_1, \script{E}_{2a}^c}$ to $\prob{\|\hat{\bv{\pi}}^m(t)-\hat{\bv{\pi}}^d(t)\|>\epsilon_d \,|\,\script{E}_1, \script{E}_{2}}$. 
Consider a $\hat{\bv{\pi}}^d(t)$ is such that $\|\hat{\bv{\pi}}^m(t)-\hat{\bv{\pi}}^d(t)\|>\epsilon_d$ given $\script{E}_1, \script{E}_{2}$. We note that there exist $i$ and $j$ such that $\pi^d_i(t)\leq\pi_i$ and $\pi^d_j(t)\geq\pi_j$. Then, we can always change $\hat{\bv{\pi}}^d(t)$ to $\tilde{\bv{\pi}}^d(t)$ by having one more sample for $j$ and one less sample for $i$ (this can be ensured with high probability since $d=O(\log^3(V))$). Since $\sigma=O(V^{-c_1/2})$ and $\epsilon_d = O(1/\log^3(V))$, we will have $\|\hat{\bv{\pi}}^m(t)-\tilde{\bv{\pi}}^d(t)\|>\epsilon_d$ given $\script{E}_1, \script{E}_{2a}^c$. Therefore, %we have: 
\begin{eqnarray*}
\hspace{-.2in}&& \prob{\|\hat{\bv{\pi}}^m(t)-\hat{\bv{\pi}}^d(t)\|>\epsilon_d \,|\,\script{E}_1, \script{E}_{2a}^c}  \\
\hspace{-.2in}&&\qquad\geq P_0\triangleq c_0\prob{\|\hat{\bv{\pi}}^m(t)-\bv{\pi}^d(t)\|>\epsilon_d \,|\,\script{E}_1, \script{E}_{2}}. \nonumber
\end{eqnarray*}
Here $c_0 = \min_i\pi_i/\max_j\pi_j$. This shows that the probability of having a change declared under $\script{E}_1, \script{E}_{2a}^c$ is more than a constant factor of that under $\script{E}_1, \script{E}_{2}$. 
As a result, using (\ref{eq:exp-c-step-1}) and the fact that $\prob{ \script{E}_{2a}^c |  \script{E}_1, \script{E}_2^c}\geq 1-O(V^{-3})$, 
\begin{eqnarray*}
\hspace{-.2in}&& \prob{\|\hat{\bv{\pi}}^m(t)-\hat{\bv{\pi}}^d(t)\|>\epsilon_d \,|\,\script{E}_1, \script{E}_{2}^c} \geq P_1, % \\
%\hspace{-.2in}&&\qquad\geq (1-O(V^{-3}))P_1. \nonumber
\end{eqnarray*}
where $P_1=c_1P_0$ and $c_1\geq c_0(1-O(V^{-3}))$. 
Thus, 
\begin{eqnarray}
\hspace{-.2in}&& \expect{|\script{C}| \,|\,\script{E}_1, \script{E}_{2}^c} \leq d/P_1. \label{eq:expected-size-e1e2c-0}
\end{eqnarray}
This is obtained by considering only testing for changes at multiples of $d$ slots. 

On the other hand, it can be shown that 
\begin{eqnarray}
\expect{|\script{C}| \,|\,\script{E}_1, \script{E}_{2}} \geq \Theta(1/P_1). \label{eq:expected-size-e1e2c-1}
\end{eqnarray}
This is so since, conditioning on $\script{E}_1, \script{E}_{2}$, samples in $\script{W}_d(t)$ evolves according to a Markov chain, with each state being a sequence of $d$ samples. Moreover, the total mass of the set of states resulting in $\|\hat{\bv{\pi}}^m(t)-\hat{\bv{\pi}}^d(t)\|>\epsilon_d$ is $P_0/c_0$ and that after $V^{2+c_1}$ time, the first $\script{W}_d(t)$ is drawn with the steady state probability (due to $S(t)$ being i.i.d.). Thus, the Markov chain is in steady state from then, showing that the time it takes to hit a violating state is $\Theta(1/P_1)$. 
%This is because conditioning on the event that the previous test is normal, the chance that the next slot is normal is no less than $1-P_1$. \huang{check}
% 
Combining (\ref{eq:expected-size-e1e2c-1}) with (\ref{eq:expected-size-e1e2c-0}), we conclude that: 
\begin{eqnarray}
\hspace{-.2in}&& \expect{|\script{C}| \,|\,\script{E}_1, \script{E}_{2}^c} \leq d\expect{|\script{C}| \,|\,\script{E}_1, \script{E}_{2}}\leq 2d\expect{|\script{C}|}. \label{eq:expected-size-e1e2c}
\end{eqnarray}
%As a result, 
The last inequality follows since $\prob{\script{E}_1, \script{E}_{2}}\geq 1-2V^{-3}$. 

Now consider the event $\mathcal{E}_1\cap \mathcal{E}_2$. %We note that in this case, 
Using the fact that $T_l=\Theta(V^{c_1})$, $\prob{\mathcal{E}_2\cap\mathcal{E}_1}\geq 1-O(V^{-3})$, %e^{-4\log(V)^2})$, 
and using almost verbatim arguments as in the proofs of Lemmas $8$ and $9$ in \cite{huang-rlc-15}, it can be shown that:\footnote{The fact that the event holds with  probability almost $1$ enables an analysis similar to that without conditioning.}  
\begin{eqnarray}
\hspace{-.3in}&&\expect{\sum_{t\in\mathcal{C}'}  [\mu_j(t) - A_j(t)]  \left.|\right. \mathcal{E}_1, \mathcal{E}_2 }\label{eq:bound-ex-3}\\
\hspace{-.3in}&&  \leq \expect{q_{js} - q_{je}\left.|\right. \mathcal{E}_1, \mathcal{E}_2} +\delta_{\max}(1+b_1\expect{|\mathcal{C}|\left.|\right.  \mathcal{E}_1, \mathcal{E}_2  }/V^{\log{V}}),  \nonumber
\end{eqnarray}
where $b_1=\Theta(1)$, and $q_{js}$ and $q_{je}$  denote the beginning and ending sizes of queue  $j$ during $\mathcal{C}'$, respectively. 
 
We first bound the $\expect{q_{js}}$. Conditioning on $\script{E}_1$, we see that there will be $T_l+d-w$ time until $W_m(t)=T_l$. 
%From the definition of $\mathcal{C}'$, we see that there will be at least $T_l$ time until $W_m(t)=T_l$. Moreover, there can be some short periods during which $W_m(t)$ gets reset after reaching $d$ (Step (i) in \ade{}, which happens with probability $O(V^{-3})$ for the $d$ value we chose). Denote this time by $d'$. We  
% 
%Since we need $O(V^{c_1})$ time before reaching $W_m(t)=T_l$,  
Thus, $\expect{q_{js}}\leq \delta_{\max}b_2(V^{c_1}+d-w)$ for some $b_2=\Theta(1)$. 
%Here the  term $4d$  is for the possible event that the cycle starts with a few short periods where $W_m(t)$ gets reset after reaching $d$ (Step (i) in \ade{}, which happens with probability $O(V^{-3})$ for the $d$ value we chose). We can also get a tighter queue size bound using the fact that $c_1\geq c$. But this suffices for our argument.  
%==here $q_{je}$ is a function of $V^c-w$ == 
%Moreover, using the definition of $\mathcal{E}_2$, it follows from Lemma \ref{lemma:estimate-error} that the estimated multiplier satisfies: 
%\begin{eqnarray}
%\| \bv{\gamma}^*(t) - \bv{\gamma}^*\|\leq b_0V^{1-c/2}\log(V),
% \label{eq:beta-rate}
%\end{eqnarray}
%where $b_0=\Theta(1)$.  
Combining (\ref{eq:bound-ex-1}), (\ref{eq:bound-ex-2}), and (\ref{eq:bound-ex-3}), we obtain: 
\begin{eqnarray}
\hspace{-.2in}&&\expect{\text{cost} } \nonumber\\
\hspace{-.2in}&&\quad\leq V\log(V)\delta_{\max}\cdot (\frac{V^{2+c_1}}{V^{c^2\log(V)}} + \frac{3\expect{|\mathcal{C}|}}{V^{\log(V)/2}})\label{eq:sum-exp} \\ 
\hspace{-.2in}&&\qquad + V\log(V)\delta_{\max}\expect{|\mathcal{C}|\,|\,\mathcal{E}_1, \mathcal{E}_2^c}\cdot Me^{-4\log(V)^2} \nonumber \\
\hspace{-.2in}&&\qquad + (\delta_{\max}b_2(V^{c_1}+d-w) +w+1)\delta_{\max}V\log(V)     \nonumber \\
\hspace{-.2in}&&\qquad + V\log(V)\delta_{\max}(1+b_1\expect{|\mathcal{C}|\left.|\right.  \mathcal{E}_1, \mathcal{E}_2  }/V^{\log{V}}).\nonumber
\end{eqnarray} 
% + \delta_{\max} V^c V\log(V)
The term $(w+1)\delta_{\max}V\log(V)$ in the last $w+1$ slots after a change detection. 
% 
%Using (\ref{eq:length-prob}), we also have: 
%can show that: 
%\begin{eqnarray}
% \expect{|\mathcal{C}|  \,|\,\mathcal{E}_1, \mathcal{E}_2^c } \leq 2\expect{|\mathcal{C}|}. 
%\end{eqnarray}
%\huang{stop here}
Combining (\ref{eq:sum-exp}) with  (\ref{eq:renewal-cycle}),  (\ref{eq:length-exp}), and (\ref{eq:expected-size-e1e2c}),  we obtain (\ref{eq:drift-1overV}). % follows. % in this case. 

%\textbf{Case (b)} Since $e_w=o(V^{-c/2})$, in this case, \plc{} will ==== 

%\huang{stop here}

(\textbf{Part (b) - Delay})  
%We now consider the delay performance. 
From the above, we see that the event $\mathcal{E}_1\cap\mathcal{E}_2$ happens with probability at least $1-O(1/V^3)$. 
Hence, we only need to show that most packets that arrive during the $\mathcal{C}'$ intervals experience small delay, conditioning on $\mathcal{E}_1\cap\mathcal{E}_2$. 

Denote $t_s$ and $t_e$ the beginning and ending slots of $\mathcal{C}'$. 
Using (\ref{eq:event3-prob}) and Lemma \ref{lemma:estimate-error}, we get that with probability at least $1- 2V^{-3}$, %Me^{-4\log(V)^2}$, 
\begin{eqnarray}
\| \bv{\gamma}^*(t) - \bv{\gamma}^*\|\leq d_{\gamma}\triangleq4b_0V^{1-c_1/2}\log(V).  \label{eq:beta-rate-general}
\end{eqnarray} 
Define 
\begin{eqnarray}
\hat{\bv{\theta}} \triangleq \bv{\gamma}^* - (\bv{\gamma}^*(t) - \bv{\theta})_+, \label{eq:theta-hat-def}
\end{eqnarray} 
%\hat{\bv{\theta}} \triangleq \bv{\gamma}^* - (\bv{\gamma}^*(t) - \bv{\theta})_+$, 
%\begin{eqnarray}
%\hat{\bv{\theta}} \triangleq \bv{\gamma}^* - (\bv{\gamma}^*(t) - \bv{\theta})_+.\label{eq:theta-hat-def}
%\end{eqnarray} 
% 
we see from Lemma \ref{lemma:prob_multi_con} that whenever $\|  \bv{q}(t) - \hat{\bv{\theta}} \| > G$,  which is equivalent to $\|\bv{Q}(t) -  \bv{\gamma}^*\|>G$, 
\begin{eqnarray*}
\expect{\|  \bv{q}(t+1) - \hat{\bv{\theta}} \| \left.|\right. \bv{q}(t)} \leq \|  \bv{q}(t) - \hat{\bv{\theta}} \| - \eta,
\end{eqnarray*}
for the same $G=\Theta(1)$ and $\eta=\Theta(1)<\eta_1$ in Lemma \ref{lemma:prob_multi_con}.\footnote{This is due to conditioning on $\mathcal{E}_1\cap\mathcal{E}_2$.} % in Lemma \ref{lemma:prob_multi_con}.  
Using (\ref{eq:beta-rate-general}) and $\theta$ in (\ref{eq:theta-value}), we see that $\hat{\bv{\theta}} = \Theta(d_{\gamma}\log(V) + \log(V)^2)$. 
Therefore, using Theorem 4 in \cite{huang-learning-sig-14}, if we assume that $\mathcal{C}'$ never ends,    %and the fact that $\bv{q}(t)=O()$
\begin{eqnarray}
\expect{T_{G}(\bv{q}(t))}\leq b_3d_{q}/\eta, \label{eq:conv-poly-tg1}
\end{eqnarray}
where $b_3=\Theta(1)$, $d_{q} = \|\hat{\bv{\theta}} - \bv{q}(t_s)\|$ and $T_{G}(\bv{q}(t))\triangleq \inf\{t-t_s: \| \bv{q}(t) - \hat{\bv{\theta}} \|\leq G\}$. 
Note that this is after $W_m(t)=T_l$ in \plc{}, which happens after $T_c=d-w+T_l$ slots from the beginning of the interval. 
%Before that, we still need to collect $V^c$ samples before the learning is performed. Given $\mathcal{E}_1$,  the time it takes is given by:  
%\begin{eqnarray}
%T_c(t) \leq (V^c-w)^+. \label{eq:sample-time-result} 
%\end{eqnarray}
%%$T_c(t) = (V^c-w)^+$ time. 
By Markov inequality, 
\begin{eqnarray}
 \prob{T_{G}(\bv{q}(t)) +T_c  > (b_3d_q/\eta+d-w+T_l)V} =\frac{1}{V}.\label{eq:poly-prob-dropping}
%\\
%\hspace{-.2in}&& \qquad\leq O(  \frac{(d_w\log(V) + \log(V)^2)  + (V^c-w)^+    }{V^{2-c/2}}). \label{eq:poly-prob-dropping}
\end{eqnarray}
%where the second term $V^c$ is the time  it takes to invoke \dl($\bv{\pi}(t)$. 
% 
Denote $\mathcal{E}_3\triangleq \{T_{G}(\bv{q}(t))+T_c(t)\leq (b_3d_q+d-w+T_l)V\}$ and let $t^*$ the first time after $t_s$ that $Y(t)\triangleq\|  \bv{q}(t) - \hat{\bv{\theta}} \|\leq G$. Following an argument almost identical to the proof of Theorem 1 in \cite{huangneely_dr_tac}, we obtain that: 
\begin{eqnarray}
\sum_{t=t^*}^{t_{e}}\frac{\nu\eta}{2}\expect{e^{\nu Y(t)}}\leq (t_{e}-t^*)e^{2\nu\sqrt{r}\delta_{\max}} + e^{\nu Y(t^*)}, \label{eq:poly-prob-dropping-detail0}
\end{eqnarray}
where $\nu \triangleq \frac{\eta}{\delta_{\max}^2 +\delta_{\max} \eta/3 }=\Theta(1)$. % and $t^*$ is the first time after $t_s$ that $\|\bv{q}(t) - \hat{\bv{\theta}}\|\geq G$. 
Define $b_4\triangleq 2e^{2\nu \sqrt{r}\delta_{\max}}/\nu \eta=\Theta(1)$ and $b_5\triangleq e^{\nu Y(t^*)}\leq e^{\nu G}=\Theta(1)$, and choose $m=\log(V)^2$. We have from  (\ref{eq:poly-prob-dropping-detail0}) that: 
\begin{eqnarray} 
\hspace{-.3in}&&\frac{1}{t_{e}-t_s}\sum_{t=t_s}^{t_{e}}\prob{Y(t)>G+m}  \label{eq:prob-queue-bdd}\\
\hspace{-.3in}&&\qquad  \leq \frac{1}{t_{e}-t_s}(\sum_{t=t*}^{t_{e}}\prob{Y(t)>G+m} +(t^*-1-t_s))\nonumber\\
\hspace{-.3in}&&\qquad \leq  \big[(b_4(t_e-t^*))V^{-\log(V)} + b_5+ (t^*-t_s)\big]/(t_{e}-t_s) \nonumber\\
\hspace{-.3in}&&\qquad = O(\frac{(b_3d_q+d-w+T_l)V +1}{V^{2+c_1}}) = O(1/V). \nonumber %=O(1/V). 
\end{eqnarray}
Thus, the above implies that, given the joint event  $\mathcal{E}_1\cap\mathcal{E}_2$, which happens with probability $1-O(1/V^3)$, 
%for each cycle, with probability at least $1-O(1/V^3)$, the joint event  $\mathcal{E}_1\cap\mathcal{E}_2$ happens. 
% 
% As a result, 
the fraction of packets enter and depart from each $q_j(t)$ when $\|  \bv{q}(t) - \hat{\bv{\theta}} \|\leq G$ is given by 
 $(1-O(1/V))(1-O(1/V))$, i.e.,  $1- O(\frac{1}{V})$. 
 % 
 %,  the fraction of time $\{\|  \bv{q}(t) - \hat{\bv{\theta}} \|\geq G\}$ happens is only $O(\log(V)/V)$, implying that at most $O(\lambda_j\log(V)/ V)$ amount of packet will enter and depart from $q_j(t)$ when $\|  \bv{q}(t) - \hat{\bv{\theta}} \|>G$. 
%From the above, we see that all but an $O(\log(V)/V)$ fraction of the traffic will 
This means that they enter and depart when $q_j(t)\in[\hat{\theta} - G - \log(V)^2, \hat{\theta} + G + \log(V)^2]$ (due to LIFO), which implies that their average delay in the queue is $O(\log(V)^2)$. 

(\textbf{Part (c) - Dropping}) First, conditioning on $\mathcal{E}^c_{1a}$, which happens probability $V^{-\log(V)/2}$, we see that the algorithm drops at most $O(V^{2+c_1})$ packets in this case. 
% with probability $V^{-\log(V)/2}$. 
% 

Now consider when $\mathcal{E}_1$ takes place, and denote as above by $t_s$ and $t_e$ the starting and ending timeslots of a cycle. 
In this case, from the rules of \ade, we see that rule (ii) is inactive, since if it is satisfied at time $T_l$, it remains so because $\hat{\bv{\pi}}^m(t)$ remains unchanged until the cycle ends. Hence, the only case when an interval ends is due to violation rule (i). Let us suppose the interval ends because at some time $t'$, we have $\|\hat{\bv{\pi}}^m(t') - \hat{\bv{\pi}}^d(t') \|>\epsilon_d$. We know then \plc{} drops all packets at time $t'+w+1$, i.e., $\bv{q}(t'+w+1)$. 

We now bound $\expect{\bv{q}(t'+w+1)}$. To do so, consider the time $t^*=t'-2d$. We see then $\bv{q}(t^*)$ and all queue sizes before $t^*$ are independent of $\hat{\bv{\pi}}^d(t')$. Also, $\sum_jq_j(t'+w+1)\leq \sum_jq_j(t^*) + r(2d+w+1)\delta_{\max}$. 

Consider the time interval from when $W_m(t)=T_l$ utill $t^*$ and consider two cases, (i) $e_w=\Omega(V^{-c/2})$ and (ii) $e_w=O(V^{-c/2})$. 
In the first case, we see that $T_l=V^c$. Thus,  $q_j(t_s+T_l)\leq\delta_{\max}V^c$.

In the second case, since $e_w=O(V^{-c/2})$, $T_l=e_w^{-2}$. We have from Lemma \ref{lemma:estimate-error} that before time $T_l$, the estimated multiplier $\|\bv{\gamma}^*(t) -  \bv{\gamma}^* \|\leq Ve_w=O(V^{1-c/2})$. As a result, using the definition of $\hat{\bv{\theta}}$ in (\ref{eq:theta-hat-def}) and denoting $Z(t) = \|(\bv{q}(t) - \hat{\bv{\theta}})_+\|$, we see that whenever $Z(t)\geq G$, $\expect{|Z(t+1) - Z(t)|\,|\,Z(t)}\leq -\eta$. It can also be checked that the other conditions in  Lemma \ref{lemma:queue-bdd-drift} are satisfied by $Z(t)$, and $\bv{q}(t_s)=\bv{0}$ and $Z(0)=0$. 
Thus, 
\begin{eqnarray}
\expect{Z(T_l)} \leq G+\sqrt{r}\delta_{\max} +O(1). \label{eq:z-bdd}
\end{eqnarray}
Thus, $\expect{\bv{q}(t_s+T_l)}=O(V^{1-c_1/2})$. 
Combining the two cases, we have $\expect{\bv{q}(t_s+T_l)}=O(V^{1-c_1/2}   +V^{c}) = O(V)$. 

After $t_s+T_l$, the distribution $\hat{\bv{\pi}}^m(t)$ is used to compute the multiplier. Since $T_l =\max(V^c, e_w^{-2})$, we see that the argument above similarly holds. 
%(\ref{eq:z-bdd}) similarly holds for time $t^*$ with starting point being $t_s+T_l$ (equivalently, with $\expect{\bv{q}(t_s+T_l)}=O(V^{1-c/2}   +V^{c}) = O(V)$). 
% 
Thus, using Lemma \ref{lemma:queue-bdd-drift}, we see that $\expect{\bv{q}(t^*)} = O(V)$, which implies $\expect{\bv{q}(t'+w+1)} = O(V+d)$. 
%We see that in all cases, we have $\expect{\bv{q}(t'+w+1)} = O(V+d)$. 
Therefore, packets will be dropped no more than every $V^{2+c_1}$ slots, and at every time we drop no more than $O(V)$ packets on average. 

Finally, consider given $\mathcal{E}^c_{1b}$. 
Using (\ref{eq:length-exp}) and (\ref{eq:cond-cycle-foo}), we note that conditioning on $\mathcal{E}^c_{1b}$, the cycle lasts no more than $3 \expect{|\mathcal{C}|}$ on average, which means that the number of packets dropped is at most $O(\expect{|\mathcal{C}|})$ every cycle on average.  Moreover,  using (\ref{eq:prob-e-1b-c}), we see that this happens with probability $O(V^{-3})$. %, we see that the number of packets drop in this case is also 

The result follows by combining the above cases. %, we see that the result follows. 
%This completes the proof the theorem. 
\end{proof}
 
\section*{Appendix C - Proof of Theorem \ref{theorem:plc-nonstationary}}\label{section:appendix-C}
In the following proof, we will refer to some steps in Appendix B, to avoid restating the steps.

\textbf{(Proof of Theorem \ref{theorem:plc-nonstationary})}  
%We prove Theorem \ref{theorem:plc-nonstationary} in this section. 
% 
We first have the following lemma to  show that if each $d_k\geq4d$, then \ade{}  keeps only $o(d)$ samples (timeslots) from the previous distribution in $\mathcal{W}_m(t)$ after change detection. 
This step is important, as if $\mathcal{W}_{m}(t)$ contains too many samples from a previous distribution interval, the distribution estimation $\hat{\bv{\pi}}^m(t)$ can be inaccurate and lead to a high false-negative rate, which in turn affects performance during $\mathcal{I}_k$. 
% 
%==stop here==
The proof of the lemma is given at the end of this section. 
\begin{lemma}\label{lemma:samples}
Under the conditions of Theorem \ref{theorem:plc-nonstationary}, with probability $1- O(V^{-3\log(V)/4})$ only $o(d)$ samples from $\mathcal{I}_{k-1}$ remain in $\mathcal{W}_{m}(t)\cup\mathcal{W}_{d}(t)$ for $t\geq t_k+d$. $\Diamond$%Then, if 
%Suppose $d_{k-1}, d_k\geq4d$ and that $\mathcal{W}_{m}(t)\cup\mathcal{W}_{d}(t)$ contains only $o(d)$ samples from $d_{k-2}$, then \ade{} will at most keep $o(d)$ samples from $d_{k-1}$ for $t\in\mathcal{I}_k$. 
\end{lemma}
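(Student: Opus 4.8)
The plan is to reduce the lemma to a single statement about the change detector: with the stated high probability, \ade{} declares a change through rule (i) at exactly one slot $\tau^*$ lying in the transition window $[t_k,\,t_k+d-w]$, and this single reset both places the new window-start $b_m(\tau^*)=\tau^*+w+1$ inside $\mathcal{I}_k$ and, together with $|\mathcal{W}_d(t)|\le d$, flushes all stale samples by slot $t_k+d$. Once I have $b_m(\tau^*)\ge t_k$ and $|\mathcal{W}_d(t)|\le d$, the bookkeeping closes: for every $t\ge t_k+d$ the detection window spans $[t+w-d,\,t+w]\subseteq[t_k+w,\infty)\subseteq\mathcal{I}_k$ and the memory window starts no earlier than $t_k$, so at most the $O(w)=o(d)$ boundary slots can be inherited from $\mathcal{I}_{k-1}$.

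First I would fix the state of the two windows at the onset of the transition. Using $d_{k-1}\ge 4d$ with Lemma \ref{lemma:detection}(b) applied slot-by-slot over $\mathcal{I}_{k-1}$, no spurious reset occurs inside $\mathcal{I}_{k-1}$ except on an event of probability $O\big(d_{k-1}(\delta+(w+1)MT_l^{-2\log(T_l)})\big)$; on its complement $\mathcal{W}_m(t_k)$ consists solely of $\bv{\pi}_{k-1}$-samples and has grown to $W_m(t_k)\ge d$. Next I would track the detection window: since $\hat{\bv{\pi}}^d(t)$ averages observed samples on $[t+w-d,t-1]$ with predicted distributions on $[t,t+w]$, its $\bv{\pi}_k$-weight rises from $(w+1)/d$ at $t=t_k$ to $1$ at $t=t_k+d-w$, while $\hat{\bv{\pi}}^m(t)$ stays essentially $\bv{\pi}_{k-1}$ because its right end $b^s_d(t)=t+w-d$ stays $\le t_k$ throughout the transition. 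At $\tau^*=t_k+d-w$ the detection window is purely $\bv{\pi}_k$, so $\|\hat{\bv{\pi}}^d(\tau^*)-\hat{\bv{\pi}}^m(\tau^*)\|_{tv}$ concentrates around $\|\bv{\pi}_k-\bv{\pi}_{k-1}\|_{tv}\ge M\epsilon^*_0$.

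With this in place I would invoke Lemma \ref{lemma:detection}(a) with $\bv{\pi}_1=\bv{\pi}_k$ in $\mathcal{W}_d$ and $\bv{\pi}_2=\bv{\pi}_{k-1}$ in $\mathcal{W}_m$. Its hypotheses hold for large $V$: the separation $\max_i|\pi_{ki}-\pi_{k-1,i}|>4(w+1)e_w/d$ follows from $\epsilon^*_0=\Theta(1)$ and $4(w+1)e_w/d\to 0$; the choice $\epsilon_d<\epsilon^*_0$ is admissible; and $d=\log^3(V)/\epsilon_d^2$ exceeds $\ln(4/\delta)/(2\epsilon_d^2)+w+1$ since $\ln(4/\delta)=\Theta(\log^2 V)$. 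Hence rule (i) fires with probability at least $1-\delta$, setting $b_m=b^s_d=b^e_d=\tau^*+w+1$; if the divergence crossed $\epsilon_d$ earlier the reset only occurs sooner, and in every case $b_m>t_k$, because detection cannot fire before the $\bv{\pi}_k$-weight reaches $\epsilon_d/\|\bv{\pi}_k-\bv{\pi}_{k-1}\|_{tv}$, i.e. not before $\tau^*\ge t_k-w+\Theta(d)$. Finally I would run the post-reset bookkeeping: steps (iii)--(iv) keep both windows empty on $[\tau^*+1,\tau^*+w+1]$ and then regrow them from $b^s_d=(t+w-d)_+$; since $W_m$ cannot reach $d$ again before $\tau^*+d>t_k+d$, no second reset intervenes, and although the regrown $\mathcal{W}_d(t)$ may momentarily reach back before $t_k$, its oldest slot $t+w-d$ already satisfies $t+w-d\ge t_k$ as soon as $t\ge t_k+d$. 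Collecting the failure events — spurious resets in $\mathcal{I}_{k-1}$, the $\delta$-failure of Lemma \ref{lemma:detection}(a), and the $Me^{-\Theta(\log^2 V)}$ concentration slips of $\hat{\bv{\pi}}^m$ — and union-bounding over the $O(d)$ relevant slots yields the claimed $1-O(V^{-3\log(V)/4})$.

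The main obstacle I anticipate is the window bookkeeping around the single reset, not the concentration estimates. One must exclude the dangerous scenario in which an early reset — caused by the prediction window letting $\mathcal{W}_d$ ``see'' $\bv{\pi}_k$ up to $w$ slots ahead — drives $b_m$ below $t_k$, and the dual scenario in which the regrown detection window dips back into $\mathcal{I}_{k-1}$ and is not yet flushed at $t_k+d$. Both are handled by the precise inequalities $\tau^*+w+1\ge t_k$ and $t+w-d\ge t_k$ for $t\ge t_k+d$, which is exactly where the hypotheses $d_k\ge 4d$, $d\ge w+1$, and the strict gap $\epsilon_d<\epsilon^*_0$ are all consumed.
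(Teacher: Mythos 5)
There is a genuine gap: your proposal collapses the proof into the ``clean transition'' case by claiming, via a slot-by-slot union bound of Lemma \ref{lemma:detection}(b) over all of $\mathcal{I}_{k-1}$, that no reset occurs inside $\mathcal{I}_{k-1}$, so that at $t_k$ the memory window is full ($W_m(t_k)\geq d$) and pure $\bv{\pi}_{k-1}$, after which a single application of Lemma \ref{lemma:detection}(a) fires at $\tau^*\in[t_k, t_k+d-w]$ and flushes everything. This union bound does not survive the theorem's parameter regime: $d_{k-1}=\Theta(V^{1+a}T_l)$ with $T_l=\max(V^c, e_w^{-2})$, and nothing bounds $e_w$ away from zero, so $T_l$ (hence $d_{k-1}$) can be super-polynomially large compared to $1/\delta = V^{3\log(V)/4}$, making $d_{k-1}\delta$ unbounded rather than $O(V^{-3\log(V)/4})$. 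Your final accounting also quietly drops this factor (``union-bounding over the $O(d)$ relevant slots'') after having union-bounded over $d_{k-1}$ slots in the first step. The paper deliberately avoids any bound over the whole interval: its argument is local to an $O(d)$-neighborhood of $t_k$, conditions on the window configuration at $t=t_k-w$, and uses only $O(1)$ invocations of Lemma \ref{lemma:detection}.

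The concrete missing idea is the paper's case (iii): a reset may have been declared at some $t'\in[t_k-d,\,t_k-w]$ (e.g., a false alarm), leaving $W_d(t)<d$ at the transition; the windows then rebuild \emph{straddling} $t_k$, so the regrown $\mathcal{W}_m$ carries a mixed distribution $\bv{\pi}'=\alpha\bv{\pi}_{k-1}+(1-\alpha)\bv{\pi}_k$ with $\alpha=\Theta(1)$. In that scenario your single clean $\tau^*$ does not exist -- at $t_k+d-w$ the memory is neither pure $\bv{\pi}_{k-1}$ nor necessarily of size $\geq d$, so Lemma \ref{lemma:detection}(a) as you invoke it does not apply, and your argument would silently leave $\Theta(d)$ stale samples in place. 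The paper instead shows that, conditioned on this configuration (noting carefully that the event $W_d(t)<d$ is independent of the sample realizations in the windows), a \emph{second} detection comparing $\bv{\pi}'$ against the pure-$\bv{\pi}_k$ detection window fires with probability $1-\delta$, using the separation $\alpha\epsilon_0^*$, and this second reset is what removes the mixture. Your closing remarks do flag the right boundary inequalities ($\tau^*+w+1\geq t_k$, and $t+w-d\geq t_k$ for $t\geq t_k+d$), and those parts match the paper's cases (i)--(ii); but without the mixed-window case and without a conditioning-based (rather than interval-wide union-bound) probability argument, the proof does not go through.
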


We now prove Theorem \ref{theorem:plc-nonstationary}. 

\begin{proof} (Theorem \ref{theorem:plc-nonstationary})  We first have from Lemma \ref{lemma:detection} that with probability at least $1-V^{-3\log(V)/4}$ ($\delta=V^{-3\log(V)/4}$), distribution change will be detected before $t_k+d-w$. Denote this event by $\mathcal{E}_4$. 
\begin{figure}[ht]
\begin{center}
\vspace{-.1in}
\includegraphics[width=3.2in, height=0.8in]{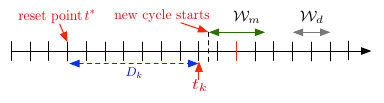}
\vspace{-.25in}
\caption{\small{Intervals in a non-stationary system.}}
\label{fig:timeline-structure2}
\end{center}
\vspace{-.15in}
\end{figure}

%==stop here==

(\textbf{Part (a) - Utility}) 
Using Lemma \ref{lemma:samples}, we see that $o(d)$ samples will remain in $\mathcal{W}_m(t)$. This implies that when $V$ is large and $W_{m}(t)=d$, with probability $1-O(V^{-\log(V)/2})$, 
%we always have: 
\begin{eqnarray}
|\hat{\pi}^m_{i}(t) - \pi_{ki}   | \leq \epsilon_d/8,  \,\forall\, i, \label{eq:od-consequence}
\end{eqnarray}
where $\hat{\bv{\pi}}^m(t)$ is the distribution in window $\mathcal{W}_{m}(t)$ (can contain  timeslots from the previous interval). This shows that the empirical distribution of $\mathcal{W}_{m}(t)$ is close to the true distribution even though it may contain samples from $\script{I}_{k-1}$. 
%The inequality holds since otherwise we have $|\hat{\bv{\pi}}_{m}(t)-\bv{\pi}_{k}  |\geq \alpha\bv{\pi}_{k}$ for some $\alpha\in(0, 1)$, i.e., due to samples from $\mathcal{I}_{k-1}$, violating the $o(d)$ condition. 
% 
Thus, as $W_{m}(t)$ increases, $\hat{\bv{\pi}}^{m}(t)$ will only become closer to $\bv{\pi}_{k}$, so that (\ref{eq:od-consequence}) holds whenever $\mathcal{W}_{d}(t)\subset \mathcal{I}_k$. 
Denote (\ref{eq:od-consequence}) event $\mathcal{E}_5$. 

%Having established (\ref{eq:od-consequence}), we can 
Now use an argument similar to the proof of Lemma  \ref{lemma:detection}, we can show that:  
\begin{eqnarray}
\hspace{-.2in}&&\prob{\|\hat{\bv{\pi}}^d(t) - \hat{\bv{\pi}}^m(t)\|_{TV} \geq \epsilon_d} \leq V^{-\log(V)/3}. \nonumber
\end{eqnarray}
Hence, for each cycle $\mathcal{C}\subset\mathcal{I}_k$, if we denote $\mathcal{E}_6$ the event that \ade{} does not declare any distribution change in steps (i) and (ii) for $V^{1+a}T_l\log(V)$ slots, and $\mathcal{E}_2$ before equation (\ref{eq:event2-prob}) holds (note that (\ref{eq:event2-prob}) only requires that $W_m(t)=T_l$), 
we see that 
\begin{eqnarray} 
\prob{\mathcal{E}_6 } \geq 1- V^{-2}. 
\end{eqnarray}
% 
%Denote this by $\mathcal{E}_6=\{ |\mathcal{C}| \geq V^{1+a}T_l\log(V) \}$. 
This implies that $\mathcal{I}_k$ mostly likely only contains one cycle $\mathcal{C}$. 

Therefore, conditioning on $\mathcal{E}_4\cap\mathcal{E}_5\cap\mathcal{E}_6$, which happens with probability $1-O(V^{-2})$ and implies that for cycle $\mathcal{C}'$, $\bv{q}(t_s)=\Theta(D_k+T_l+d-w)$, we have: 
\begin{eqnarray}
\hspace{-.1in}&& \expect{\text{cost} } \leq  r(D_k+T_l+d-w)b_2\delta_{\max}V\log(V)     \nonumber \\
\hspace{-.1in}&&\qquad\qquad + V\log(V)\delta_{\max}(1+b_1\expect{|\mathcal{C}| }/V^{\log{V}}).\nonumber 
\end{eqnarray} 
%  from (\ref{eq:aug-drift-step0}) and (\ref{eq:sum-exp}),\footnote{The $o(d)$ samples from $\mathcal{I}_{k-1}$ do not affect the order of the estimation accuracy in $\mathcal{E}_2$ and (\ref{eq:sum-exp}) holds.} and 
Applying the argument in the proof of  Theorem \ref{theorem:plc-stationary}, we see that $\frac{1}{d_k}\sum_{t=0}^{d_k-1}\expect{\Delta_1(t)} = O(\frac{D_k\log(V)}{T_lV^a})$. Hence, the result follows. 
%$\expect{\text{cost}}/\expect{|\mathcal{C}|}=O(1)$. 
% 
%Thus, applying the argument in the proof of  Theorem \ref{theorem:plc-stationary}, we see that the result follows. 

(\textbf{Part (b) - Queue}) From the above, we see that at time $t_k$, $\bv{q}(t_k)=O(D_k)$. We also know that the current cycle $\mathcal{C}$ will start no later than $t_k+d-w$ with probability $1-O(V^{-3\log(V)/4})$, in which case $\bv{q}(t_s)=O(D_k+d-w)$. 
%Therefore, $W_m(t)\geq T_l$ happens no later than $t_k+w+1+T_l$. 

%==have some problem because during the interaction, ==
%
%First, using Lemma \ref{lemma:samples}, we note that with probability $1- O(V^{-3\log(V)/4})$, there will only be $o(d)$ slots from $\mathcal{I}_{k-1}$ be included in $\mathcal{W}_m(t)$ for $t\in\mathcal{I}_k$. 
%We now look at the average queue size. Denote $t^*$ the most recent time before $t_k$ that $\bv{q}(t^*)=\bv{0}$ (either enforced by \plc{} or it happens to hit zero, not necessarily during $\mathcal{I}_{k-1}$). $t^*$ exists because $t_k$ is finite. 

Since the system is polyhedral with $\rho$, using an argument similar to the proof for Part (c) of Theorem \ref{theorem:plc-stationary} (which is possible since the argument there applies to finite intervals),   if we define $\tilde{\bv{\theta}} = \Theta((\min(V^{1-c/2}, Ve_w)+1)\log^2(V) + D_k+d-w)$ and $Z(t) =\|(\bv{q}(t) - \tilde{\bv{\theta}})_+\|$, then throughout $t\in[t_s, t_{k+1}-1]$, %then, 
\begin{eqnarray} 
\expect{Z(t)} \leq G+\sqrt{r}\delta_{\max} +O(1). 
\end{eqnarray}
Therefore, Part (b)  follows.  
\end{proof}

Here we provide the proof for Lemma  \ref{lemma:samples}. %We refer to the timeslots contained in a window as samples. 

\begin{proof} (Lemma \ref{lemma:samples}) 
Consider time $t=t_k-w$. We have the following cases. 
%we have $t^u_{d}(t)=t_k$. Note that this always happens. We have the following cases. 

(i) $W_{d}(t) = d$ and $W_{m}(t) < d$. Since $d_{k-1}\geq4d$, we see that the change point $t_k$ will be detected with probability at least $1-\delta$ at time $t'\leq t+d$, because $\mathcal{W}_{d}(t')$ will contain samples from $\bv{\pi}_k$ while $\mathcal{W}_{m}(t')$ will contain samples from $\bv{\pi}_{k-1}$ (Note that although this is conditioning on $W_{d}(t) = d$ and $W_{m}(t) < d$, since at this point no statistical comparison will be assumed, it is independent of the realizations in the two windows). Moreover, all samples from $\mathcal{I}_{k-1}$ will be removed and not remain in $W_{m}(t)$ and $W_{d}(t)$, while at most $w+1$ samples from $\mathcal{I}_k$ will be discarded.

%==stop here==

(ii) $W_{d}(t) = d$ and $W_{m}(t)\geq d$. In this case, if a change is declared, then we turn to case (iii). %Otherwise it implies that $\|  \hat{\bv{\pi}}^m(t)  - \hat{\bv{\pi}}^d(t) \|\leq \epsilon_d$. 
Otherwise, since the samples in $\mathcal{W}_{m}(t)$ are drawn from $\bv{\pi}_{k-1}$,  
%for each $\hat{\pi}^d_i(t)$, 
 we have:  
 \begin{eqnarray}
\prob{\|\hat{\bv{\pi}}^m(t) - \bv{\pi}_{k-1}\| \leq \epsilon_d/2} \geq 1- V^{-3\log(V)/4}. \label{eq:k00-bdd}
\end{eqnarray}
%Conditioning on the fact that  $|\hat{\pi}^d_i(t) - \pi_{k-1,i}| \leq \epsilon_d/2$ for all $i$, we must have 
%This implies that $\|\hat{\bv{\pi}}^m(t) - \bv{\pi}_{k-1}\| \leq 3\epsilon_d/2$,  
%\end{eqnarray}
%$|\hat{\pi}^m_i(t) - \pi_{k-1,i}| \leq 3\epsilon_d/2$, 
%since \ade{} has yet declared a distribution change.  
%As a result, 
%\begin{eqnarray}
%|\overline{\mu}_{i}(t) - \pi_{k-1,i}| \leq 3\epsilon/2. \label{eq:k1-bdd}
%\end{eqnarray} 
Now suppose no change is detected till time $t+d$. Then $W_m(t+d)\geq d$. 
Denote $\mathcal{E}_7\triangleq\{ \|\hat{\bv{\pi}}^m(t) - \bv{\pi}_{k-1}\| \leq \epsilon_d/2  \}$. 
Conditioning on $\mathcal{E}_7$ and using (\ref{eq:k00-bdd}), we have:  
 \begin{eqnarray}
\prob{\|\hat{\bv{\pi}}^m(t+d) - \bv{\pi}_{k-1}\| \leq \frac{\epsilon_d}{2} \,|\,\mathcal{E}_7} \geq 1- 2V^{-3\log(V)/4}. \label{eq:k01-bdd}
\end{eqnarray}
The inequality follows since $\prob{  \mathcal{E}_7 } \geq 1- V^{-3\log(V)/4}$. 
Now  $\mathcal{W}_{d}(t+d)$ contains only samples from $\bv{\pi}_k$, in which case we similarly have: 
\begin{eqnarray}
\prob{\|\hat{\bv{\pi}}^d(t+d) - \bv{\pi}_{k}\| \leq \epsilon_d/2 } \leq 1- V^{-3\log(V)/4}. \label{eq:k-bdd}
\end{eqnarray} 
 Since the state realizations in (\ref{eq:k01-bdd}) and (\ref{eq:k-bdd}) are independent, we conclude that with probability $1- 3V^{-3\log(V)/4}$, a change will be declared before $t_k$ and all samples from $\mathcal{I}_{k-1}$ will be removed and not remain in  $\mathcal{W}_{m}(t)\cup\mathcal{W}_{d}(t)$. 

%=======  

(iii) $W_{d}(t) < d$. We argue that with high probability, at most $o(d)$ samples can remain at time  $t_k+2d - W_{d}(t)$. % or $W_{d}(t)=o(d)$. In both cases, at most $o(d)$ samples can remain. 
First, note that $W_{d}(t) < d$ only occurs when a detection has been declared at a time $t+w-d\leq t'\leq t$. Thus, if $t+w-t'=o(d)$, then we are done. Otherwise suppose $t+w-t'=\alpha d$ for $\alpha=\Theta(1)$. If they are removed, then at time $t'+2d$,  $\mathcal{W}_{m}(t'+2d)$ contains samples with mixed distribution $\bv{\pi}'=\alpha\bv{\pi}_{k-1}+(1-\alpha)\bv{\pi}_k$ and $\mathcal{W}_{d}(t'+2d)$ containing samples with  distribution $\bv{\pi}_k\neq \bv{\pi}'$. 
Similar to case (i), the condition  $W_{d}(t) < d$ is independent of the state realizations in the two windows. 
Using Lemma \ref{lemma:detection} (it can be checked that the conditions in the lemma are satisfied), we see that this will be detected by \ade{} with probability $1-\delta$ with a large $V$. 

Combining all three cases completes the proof. 
\end{proof}

\section*{Appendix D - Proof of Theorem \ref{theorem:perfect-plc}}
\textbf{(Proof of Theorem \ref{theorem:perfect-plc})}   
We prove Theorem \ref{theorem:perfect-plc} here. 
\begin{proof}
(\textbf{Part (a) - Stationary}) The results follow from the fact that when $e_w=0$, \plc{} is equivalent to \olac{} in \cite{huang-learning-sig-14} with perfect statistics. Hence the results follow from Theorems $1$ and $2$ in \cite{huang-learning-sig-14}.

(\textbf{Part (b) - Non-Stationary}) We first see that at time $t$, \ade{} detects distribution change in time $t+w$ through step (ii) with probability $1$. %However, 
Then, after time $t_k+d-w$, $\bv{\pi}_a(t)=\bv{\pi}_k$ and we see that whenever $Z(t)\triangleq\|\bv{q}(t)  - \bv{\theta}\|>G$ for $\theta=2\log^2(V)$ and $G=\Theta(1)$,  
\begin{eqnarray} 
\expect{Z(t+1)\,|\, \bv{q}(t) } \leq Z(t) -\eta. 
\end{eqnarray}
Denote $b_6 = \frac{1}{r_z}\log(e^{r_zr\delta_{\max}-\rho_z}/(1-\rho_z))$. 
We want to show via induction that for all $k$, 
\begin{eqnarray} 
\expect{\sum_jq_j(t_k)} \leq q_{th}\triangleq 2r\log^2(V)+b_6+ 2G+dr\delta_{\max}. \label{eq:queue-bdd-perfect-predict}
\end{eqnarray} 
%$\expect{q_j(t_k)} \leq q_{th}\triangleq b_6\log^2(V)+2G+d\delta_{\max}$ for all $k$ by induction. 
% 
First, it holds for time zero. Suppose it holds for interval $\mathcal{I}_k$. We now show that it also holds for interval $k+1$. 

To do so, first we see that during time $[t_k, t_k+d-w]$, there can be an increment of $q_j(t)$ since $\bv{\pi}_a(t)$ during this interval is a mixed version of $\bv{\pi}_{k-1}$ and $\bv{\pi}_{k}$. Thus, 
\begin{eqnarray}
\expect{\sum_jq_j(t_k+d)} \leq q'_{th}\triangleq q_{th} + dr\delta_{\max}. 
\end{eqnarray} 
Using Lemma \ref{lemma:queue-bdd-drift}, we have: 
\begin{eqnarray*} 
\hspace{-.2in}&&\expect{e^{r_z  Z(t_{k+1}-d) }} \\
\hspace{-.2in}&&\qquad\leq \frac{e^{r_zr\delta_{\max}-\rho_z}}{1-\rho_z}e^{r_zG} + (e^{r_zq'_{th}} -b_6e^{r_zG})\rho_z^{d_k-2d}. 
\end{eqnarray*}
Using the definition of $q_{th}$ and the fact that $d_k\geq d\log^2(V)$, we have that for a large $V$, $(e^{r_zq'_{th}} -b_6e^{r_zG})\rho^{d_k-2d} \leq G$. Thus, 
\begin{eqnarray} 
\expect{Z(t_{k+1}-d) } \leq b_6 +2G, 
\end{eqnarray}
 which implies $\expect{\sum_jq_j(t_{k+1} - d)} \leq 2r\log^2(V) + b_6 +2G$. It thus follows that $\expect{\sum_jq_j(t_{k+1} - d)} \leq q_{th}\leq b_7\log^4(V)$ for some $b_7=\Theta(1)$. 
 
Having established this result, using an argument similar to that in the proof of Theorem \ref{theorem:plc-nonstationary}, we have: 
\begin{eqnarray}
\hspace{-.1in}&& \expect{\text{cost} } \leq  b_7\log^4(V)\cdot V\log(V)     \nonumber \\
\hspace{-.1in}&&\qquad\qquad + V\log(V)\delta_{\max}(1+b_1\expect{|\mathcal{C}| }/V^{\log{V}}).\nonumber 
\end{eqnarray} 
Using $d_k\geq V^{1+\epsilon}$, we see that Part (b) follows. 
%This completes the proof of the theorem. 
%In this case, following a similar argument as in (\ref{eq:bound-ex-3}), we obtain: 
%\begin{eqnarray} 
%\hspace{-.3in}&&\expect{\sum_{t\in\mathcal{I}_k}  [\mu_j(t) - A_j(t)]   }   \leq \expect{q_{j}(t_k)} +\delta_{\max}(1+b_1d_k/V^{\log{V}}). \nonumber
%\end{eqnarray}
%Using the fact that $\bv{\gamma}^*=O(V\log(V))$, this then implies: 
%\begin{eqnarray}
%\hspace{-.1in}\expect{\text{cost} } &\leq&  \expect{q_{j}(t_k)} V\log(V)     \nonumber \\
%\hspace{-.1in}&& + V\log(V)\delta_{\max}(1+b_1d_k/V^{\log{V}}).\nonumber
%\end{eqnarray} 
%
%=============
%
%Hence, if $d_k\geq V^2\log(V)$
\end{proof}

\section*{Appendix E - Proof of Theorem \ref{theorem:plc-conv-stationary}} 

\textbf{(Proof of Theorem \ref{theorem:plc-conv-stationary})}    
Here we prove the convergence results. We sometimes drop the subscript $k$ when it is clear. 
\begin{proof} (Theorem \ref{theorem:plc-conv-stationary}) 
First, when $e_w=0$, we see that for any interval $\mathcal{I}_k$, for all time $t\geq t_k+d$, $\bv{\pi}_a(t)=\bv{\pi}_k$, and $\bv{\gamma}^*(t) = \bv{\gamma} - \bv{\theta}$. Using Lemma 5 in \cite{huang-learning-sig-14} and the fact that $d=O(\log^2(V))$, we have: 
\begin{eqnarray*}
&& \expect{T_G} = \expect{ \expect{T_G \,|\, \bv{q}(t_k)} } \\
&&\qquad\quad\, \stackrel{(*)}{=} \expect{\Theta( \| \bv{q}(t_k)  - \bv{\theta} \| )   }
\stackrel{(**)}{=} \Theta(\log^4(V)). 
\end{eqnarray*}
Here (*) follows from Lemma 5 in \cite{huang-learning-sig-14} and (**) follows from (\ref{eq:queue-bdd-perfect-predict}). 

Consider the other case  $e_w>0$. 
Using Lemma \ref{lemma:samples}, we see that with probability at least $1-V^{-3}$, \plc{} detects distribution change before time $t_k+d$. 
Recall the event $\mathcal{E}_1$  that \ade{} does not declare change in the first $V^{2+c_1}$ slots from the proof of Theorem \ref{theorem:plc-stationary}, where $c_1$ is such that $T_l=V^{c_1}$. Note that this implies $\{ d_{m} \geq V^{2+c_1}\}$). 
%$\mathcal{E}_1\triangleq\{ d_{m} \geq V^{2}T_l\}$  
From (\ref{eq:length-prob}), we know that:  
%Define $\mathcal{E}_3$ to be the event that $\mathcal{W}_{m}(t)$ grows to a length $V^2$ from time $0$. Using a similar argument as in the proof of Theorem \ref{theorem:plc-nonstationary} Part (a), we see that: 
\begin{eqnarray}
\prob{\mathcal{E}_1} \geq 1-V^{-3}. 
\end{eqnarray}
Conditioning on $\mathcal{E}_1$, the time it takes to achieve $||\bv{Q}(t)-\bv{\gamma}^*||\leq G$ is no more than the sum of (i) the time it takes to reach $W_m(t)=T_l$, and (ii) the time it takes to go from the estimated multiplier $\bv{\gamma}^*(t) - \bv{\theta}$ to $\bv{\gamma}^*$. %, denoted by $T_G(\bv{q}(t))$, both as in the proof of Theorem \ref{theorem:plc-stationary}. 
% 
%We consider two cases: (i)  $e_w=o(V^{-c/2})$ and $e_w=\Omega(V^{-c/2})$. In the second case, we have $T_l=V^c$. 
Denote $\mathcal{E}_8(t) = \{\|\bv{\pi}^m(t) -  \bv{\pi} \|_{TV} \leq 2M\log(T_l)T_l^{-1/2}\}$. 
When $W_m(t)=T_l$, we have 
\begin{eqnarray} 
\prob{\mathcal{E}_8(t)} \geq 1- O(MT_l^{-2\log(T_l)}), \label{eq:random-eq-00}
\end{eqnarray}
in which case $\|\bv{\gamma}^*(t) -\bv{\gamma}^* \| = \Theta(\frac{V\log(V)}{\sqrt{T_l}})$. % and $\theta=\Theta(V^{1-c/2}\log^2(V))$. 
As in the proof of Theorem \ref{theorem:plc-nonstationary}, we see that when $W_m(t)=T_l$, $\bv{q}(t)=O(D_k+T_l+d)$, which implies that $\|\bv{Q}(t) - \bv{\gamma}^*\| = \Theta((1+\frac{V}{\sqrt{T_l}})\log^2(V)+T_l+ D_k+d)$. Using Lemma 5 in \cite{huang-learning-sig-14} again, we see that if \ade{} always does not declare change,  
\begin{eqnarray}
\expect{T_G} = O(\theta+T_l+ D_k+d). 
\end{eqnarray} 
Using Markov inequality, we see that: 
\begin{eqnarray}
\prob{T_G \geq V^{2+c_1} }\leq O(V^{-1-c_1} +D_kV^{-2-c_1} ). 
\end{eqnarray}
Thus, with probability $1-O(V^{-1-c_1} +D_kV^{-2-c_1})$, convergence occurs before $V^{2+c_1}$. This proves (\ref{eq:convergence-time1}). 

%The above follows because conditioning on $\mathcal{E}_1$,  even  if the period ends before $O(D_k)$ time, the expected time after this is still only $O(\theta+T_l+d)$. 
%  

To prove (\ref{eq:convergence-time2}), define $G_1=\Theta(D_k+2\log(V)^2(1+Ve_w))$. Then, we see from Lemma \ref{lemma:samples} that with probability $1- O(V^{-3\log(V)/4})$, distribution change will be detected before $t'\leq t_k+d$. At that time, we have 
$\|\bv{\gamma}^*(t)-\bv{\gamma}^*\| = O(Ve_w)$. 
Combining this with the fact that $\bv{q}(t')=O(D_k+d)$, we see that (\ref{eq:convergence-time2}) follows. 
This completes the proof. 
\end{proof}

\end{document}